\newcommand{\dsp}{\displaystyle}
\newcommand{\bd}{\begin{displaymath}}
\newcommand{\be}{\begin{equation}}
\newcommand{\ba}{\begin{array}}
\newcommand{\ed}{\end{displaymath}}
\newcommand{\ee}{\end{equation}}
\newcommand{\ea}{\end{array}}
\newcommand{\espace}{\mbox{ }}
\newcommand{\Prob}{{\rm I\hspace{-0.8mm}P}}
\newcommand{\Exp}{{\rm I\hspace{-0.8mm}E}}
\newcommand{\indicator}[1]{{\mbox{\large\bf$1$}}_{#1}}
\def\N{\mathbb{N}}
\def\Z{\mathbb{Z}}
\def\R{\mathbb{R}}
\newcommand{\eqref}[1]{(\ref{#1})}
\newtheorem{theorem}{Theorem}[section]
\newtheorem{proposition}{Proposition}[section]
\newtheorem{lemma}{Lemma}[section]
\newtheorem{corollary}{Corollary}[section]
\newtheorem{remark}{Remark}[section]
\newenvironment{proof}[2]{\espace\\{\em Proof of #1 \ref{#2}.}}{\hfill\mbox{$\square$}}
\begin{document}
\title{Supercritical behavior of asymmetric zero-range process with sitewise disorder}
\author{C. Bahadoran$^{a}$, T. Mountford$^{b}$, K. Ravishankar$^{c}$, E. Saada$^{e}$}
\date{}
\maketitle
$$ \ba{l}
^a\,\mbox{\small Laboratoire de Math\'ematiques, Universit\'e Blaise Pascal, 63177 Aubi\`ere, France} \\
\quad \mbox{\small e-mail:
bahadora@math.univ-bpclermont.fr}\\
^b\, \mbox{\small Institut de Math\'ematiques, \'Ecole Polytechnique F\'ed\'erale, 
Lausanne, Switzerland
} \\
\quad \mbox{\small e-mail:
thomas.mountford@epfl.ch}\\
^c\, \mbox{\small NYU-ECNU Institute of Mathematical Sciences, NYU Shanghai, China}\\
\quad \mbox{\small e-mail:
 kr26@nyu.edu}\\
^e\, \mbox{\small CNRS, UMR 8145, MAP5,
Universit\'e Paris Descartes,
Sorbonne Paris Cit\'e, France}\\
\quad \mbox{\small e-mail:
Ellen.Saada@mi.parisdescartes.fr}\\ \\
\ea
$$
\begin{abstract}
We establish necessary and sufficient conditions for
weak convergence to the upper invariant measure for  one-dimensional 
asymmetric nearest-neighbour zero-range processes with non-homogeneous jump rates.
The class of  ``environments'' considered is close to that considered by \cite{afgl}, 
while our class of processes is broader. We also give  in arbitrary dimension  
a  simpler proof of the result of \cite{fs} with weaker assumptions. 
\end{abstract}
\section{Introduction}
\label{sec_intro}
Since \cite{jl, ev}, the study of disorder-induced phase transitions in driven 
lattice gases has attracted sustained interest, both in the mathematics and 
physics literature.  The model studied there was the totally asymmetric simple
 exclusion process (TASEP), respectively with a single defect site, and with i.i.d. particle disorder, 
the latter being equivalent 
to the totally asymmetric zero-range process (TAZRP) with constant jump rate and i.i.d. site disorder. 
Among subsequent works, TASEP with i.i.d. site disorder was considered 
in \cite{tb}, and  TAZRP with a single defect site
in \cite{lan2}.
More recently, dynamics-induced phase transition was studied (\cite{lan3}) 
in finite non-attractive homogeneous TAZRP, as well as the interplay between 
disorder-induced and dynamics-induced condensation (\cite{gl}).\\ \\
The aforementioned models have a single conserved quantity. In such cases, 
the usual picture is that  there exists a unique extremal invariant measure with 
given asymptotic density. This can be established rigorously for translation-invariant 
attractive processes with product invariant measures (\cite{and, coc, lig}). 
 When invariant measures are not explicit (\cite{EG, bgrs2, rez2}), one can only 
 show uniqueness, but it is not known whether such a measure exists for all density values. 
Among natural questions is the domain of attraction of each invariant measure.  
We are concerned here with the domain of attraction of the critical invariant  
measure for a class of  asymmetric zero-range processes (AZRP)  with quenched 
disorder when phase transition occurs.
For asymmetric models, large-time convergence is less well understood than for the 
symmetric exclusion process, where complete results are available (\cite{ligbook}). 
Even for translation-invariant systems, most of the precise results are one-dimensional 
and assume a translation-invariant initial distribution. Few exceptions considering 
deterministic initial configurations are \cite{brm, bl} for convergence to blocking 
measures, and \cite{BM} for convergence to translation-invariant measures.
\\ \\
 For models with mass as the only conserved quantity, phase transition is 
 defined  (\cite{cg})  as the existence of an interval of densities for which 
 an extremal invariant measure does not exist. 
For the site-disordered  AZRP 
this transition occurs when slow sites are rare enough, and for a finite system above 
critical density, the steady state is obtained  by completing the critical steady state 
with a Bose-Einstein condensate at the slowest site (\cite{ev}).  
For the infinite system  (with a drift, say, to the right),  the expected 
picture is the following (\cite{kru, jb}). (1) There is no invariant measure above 
some finite critical density $\rho_c$.   (2) Starting from an initial configuration 
with supercritical density to the left, 
(a) growing condensates of mesoscopic size appear at sites slower than their local 
environment on the left. Each condensate disappears as soon as it enters the domain 
of influence of a slower condensate on its left. Intervals between condensates are 
at critical equilibrium. (b) Eventually, all the supercritical mass will escape at 
$-\infty$, while the distribution of the microscopic state near the origin converges 
to the critical invariant measure.\\ \\
 On the mathematical side, some results related to (2a) can be found in \cite{gks}. 
A weaker form of (2b) is 
established in \cite{fs} for nonzero mean AZRP with i.i.d. site-disorder in any dimension. 
It is proved there that asymptotically, the local distribution of the process near the origin 
can never exceed the critical invariant measure, but no convergence is obtained. Besides, 
a subexponential growth condition at infinity is required on the initial configuration.
Statements (1) and (2b) are proved in \cite{afgl} for the totally asymmetric 
zero-range process with constant jump rate.  However, the approach used there 
does not extend to zero-range processes with more general jump kernels or more general 
jump rate functions, and the convergence to critical measure is established only for 
{\em strictly} supercritical initial conditions. \\ \\
In the present paper, for nearest-neighbour AZRP with site-disorder, within a large 
class of jump rate functions,
we establish convergence to the critical invariant measure when starting from an 
initial configuration whose asymptotic density to the left of the origin is at 
least critical.  Our improvement with respect to \cite{afgl} is thus 
threefold: first, we allow jumps to the right {\em and} left; next, 
the jump rate is no longer restricted to be constant; lastly, we also cover the case of
{\em critical} initial conditions. 
 Our result is  optimal in two respects. First, for a given process, 
 it provides a necessary and sufficient condition for convergence to the critical 
 measure (see \cite{bmrs1} for the proof of  necessity).  To our knowledge, this 
 is the first time a domain of attraction is completely identified for a conservative 
 system with nonzero drift. 
Next, our nearest-neighbour assumption on the kernel 
is the best possible: indeed, we showed in \cite{bmrs1} that the result may not 
hold for non nearest-neighbour kernels.
Like \cite{afgl}, we are not restricted to i.i.d. disorder, but work under sufficient 
conditions on a given environment. Our conditions on the disorder are pointwise slightly
 more restrictive than those of \cite{afgl}, but they equally include the case of a 
 random ergodic environment.\\ \\
To obtain our result, we prove an upper and a lower bound for limiting distributions as $t\to+\infty$. 
Our upper bound, valid in any space dimension, is of independent interest. Indeed, with a 
surprisingly short proof, we improve the result of \cite{fs} in two respects. First, 
we introduce a natural condition on a given environment and jump kernel; the latter is 
no longer required to have non-zero drift. Next, we remove the growth condition at infinity 
on the initial configuration.\\ \\
 Our approach of the lower bound is  based on  hydrodynamic limits. In our setting, 
 we cannot rely  on existing results. Indeed, the hydrodynamic limit of disordered AZRP 
 in the supercritical regime has been established so far in the case of TAZRP with constant 
 jump rate (\cite{ks}), but is still an open problem for more general AZRP. Let us recall 
 that for asymmetric models, the hydrodynamic limit is usually given (\cite{rez}) by 
 entropy solutions of a hyperbolic conservation law of the form
\be\label{burgers_intro}
\partial_t \rho(t,x)+\partial_x [f(\rho(t,x))]=0
\ee
where $\rho(t,x)$ denotes the local particle density. The function 
$\rho\mapsto f(\rho)$ in \eqref{burgers_intro} is the {\em flux function}, 
defined as the mean current in a system starting from a configuration with 
asymptotic mean density $\rho$.
For attractive models, phase transition implies (\cite{EG, rez2, bgrs2}) 
that $f$ is linear on any interval where invariant measures are missing.
In the present case, it is constant above critical density.
For general disordered AZRP, the hydrodynamic limit \eqref{burgers_intro}
was previously established in \cite{bfl} in any space dimension, but only 
below critical density. Here we need and establish  (see 
Proposition \ref{th_strong_loc_eq} below)  the hydrodynamic limit of a source, 
which is typically a supercritical process. Another question which has not been 
addressed yet for disordered AZRP is (strong) local equilibrium. A general approach 
was set up in \cite{lan} for translation-invariant models, but cannot be adapted 
outside this setting. We also obtain  
in Proposition \ref{th_strong_loc_eq}   a quenched strong local equilibrium 
result for the source process. The ideas used here are first steps towards
a more general proof of hydrodyamic limit and local equilibrium in disordered AZRP. 
We plan to address these issues in a forthcoming paper.
Let us mention, among related known results,
the hydrodynamic limit of TASEP with i.i.d. site disorder (\cite{sep}),
the occurrence of a plateau for the corresponding flux (\cite{bb}),
and the hydrodynamic limit of more general exclusion-like  attractive models 
with ergodic disorder (\cite{bgrs4}). \\ \\
The paper is organized as follows. In Section \ref{sec_results}, we introduce the notation 
and state our main results. 
Section \ref{sec_afgl} is devoted to the proof of the upper bound.
Section \ref{sec_proof} establishes the lower bound, and in 
Section \ref{sec_loc_eq} we prove the hydrodynamic 
limit results used in Section \ref{sec_proof}. 
\section{Notation and results}
\label{sec_results}
In the sequel,  $\R$ denotes the set of real numbers, 
$\Z$  the set of signed integers and $\N=\{0,1,\ldots\}$ 
the set of nonnegative integers. For $x\in\R$,  $\lfloor x\rfloor$  
denotes the integer part of $x$, that is the largest integer $n\in\Z$ 
such that $n\leq x$. The notation $X\sim\mu$ means that a random 
variable $X$ has probability distribution $\mu$.\\ \\
 Fix some $c\in(0,1)$. An environment (or disorder) is a 
 $(c,1]$-valued sequence  $\alpha=(\alpha(x),\,x\in\Z^d)$, for $d\ge 1$. 
 The set of environments
is denoted by ${\bf A}:=(c,1]^{\Z^d}$. 
Let $g:\N\to[0,+\infty)$ be a nondecreasing function such that
\be\label{properties_g}
g(0)=0<g(1)\leq \lim_{n\to+\infty}g(n)=:g_\infty<+\infty
\ee
We extend $g$ to $\overline{\N}:=\N\cup\{+\infty\}$  by setting $g(+\infty)=g_\infty$.
Without loss of generality, we henceforth assume $g_\infty=1$.
For $d\geq 1$, let  $\mathbf{X}:=\overline{\N}^{\Z^d}$  denote the set of particle configurations.
 This set, equipped with the product topology of $\overline{\N}$, is compact and metrizable. 
 A configuration is of the form  $\eta=(\eta(x):\,x\in\Z^d)$ 
 where $\eta(x)\in\overline{\N}$ for each $x\in\Z^d$. 
 The set $\mathbf{X}$ is equipped with the partial product order:  for $\eta,\xi\in\mathbf{X}$, we write 
$\eta\leq\xi$ if and only if $\eta(x)\leq\xi(x)$ for every $x\in\Z^d$.
Let $p(.)$ be a probability measure on $\Z^d$. 
Let $\alpha(.)$ be  a given realization of the disorder.
It follows from \cite[Theorem 3.9]{ligbook} (see  Appendix \ref{app:gen}  for details) that for any initial configuration $\eta_0\in\bf X$, there exists a unique in law Feller process $(\eta_t^\alpha)_{t\geq 0}$ on $\bf X$, such that $\eta_0^\alpha=\eta_0$, and with infinitesimal generator given
for any continuous  cylinder function $f:{\bf X}\to\R$ by
\be\label{generator}
L^\alpha f(\eta)  =  \sum_{x,y\in\Z^d}\alpha(x)
p(y-x)g(\eta(x))\left[
f\left(\eta^{x,y}\right)-f(\eta)
\right]\ee
where, if $\eta(x)>0$, $\eta^{x,y}:=\eta-\delta_x+\delta_y$ 
denotes the new configuration obtained from $\eta$ after a particle has 
jumped from $x$ to $y$ (configuration $\delta_x$ has one particle at $x$ and 
no particle elsewhere; addition of configurations is meant coordinatewise). 
 In cases of infinite particle number,  the following interpretations hold:
  $\eta^{x,y}=\eta-\delta_x$ if $\eta(x)<\eta(y)=+\infty$, 
  $\eta^{x,y}=\eta+\delta_y$ if $\eta(x)=+\infty>\eta(y)$,
$\eta^{x,y}=\eta$ if $\eta(x)=\eta(y)=+\infty$.  
Besides, if one starts with a configuration $\eta_0\in\N^{\Z^d}$, then 
with probability one, $\eta_t\in\N^{\Z^d}$ for every $t>0$. 
See Appendix  \ref{app:gen}  for more details on these statements. 
Another construction of this process from a space-time Poisson measure 
will be described in Subsection \ref{subsec:Harris} and shortly explained 
in Appendix \ref{app:graphical}, where the equivalence of the two definitions
 is also argued. The interested reader is referred to \cite{sw} for a unified 
 treatment of both constructions and their equivalence.  The framework of \cite{sw} 
 covers the present model thanks to the assumption that $g$ is bounded.  \\ \\
In the sequel, $\Prob$ will denote the underlying probability measure on a probability 
space $(\Omega,\mathcal F,\Prob)$ on which
the above process is constructed, and $\Exp$ will denote expectation with respect to $\Prob$. 
At this stage these spaces are unspecified,
and a more precise choice has no impact on the forthcoming statements of this section.
The reader may consider for instance the canonical Skorokhod space 
$\Omega=\mathcal D([0,+\infty),{\bf X})$, in which case $\Prob$ is simply the law of 
the process. However, in Subsection \ref{subsec:Harris} and subsequently, we shall be 
working with a specific choice of $\Omega$.\\ \\
For $\lambda<1$, we define the probability measure $\theta_\lambda$ on $\N$
 by \label{properties_a}
\begin{equation}\label{eq:theta-lambda}
\theta_\lambda(n):=Z(\lambda)^{-1}\frac{\lambda^n}{g(n)!},\quad n\in\N
\end{equation}
 where $g(n)!=\prod_{k=1}^n g(k)$ for $n\geq 1$, $g(0)!=1$, and $Z(\lambda)$ is the normalizing factor:
\be\label{def_Z}
Z(\lambda):=\sum_{n=0}^{+\infty}\frac{\lambda^n}{g(n)!}
\ee
We extend $\theta_\lambda$ into a probability measure on $\overline{\N}$ 
by setting $\theta_\lambda(\{+\infty\})=0$.
For $\lambda\leq c$, we denote by $\mu_\lambda^\alpha$ the invariant measure 
of $L^\alpha$ defined (see e.g. \cite{bfl}) as the product measure on $\mathbf{X}$ 
with one-site marginal $\theta_{\lambda/\alpha(x)}$. Since 
$(\theta_\lambda)_{\lambda\in [0,1)}$ is an exponential family, we have that
 \be\label{stoch_inc}
 \mu^\alpha_\lambda\,\mbox{ is weakly continuous and stochastically increasing with respect to }\lambda,
 \ee
and that the mean value of $\theta_\lambda$, given by 
\be\label{mean_density}
R(\lambda):=\sum_{n=0}^{+\infty}n\theta_\lambda(n)
\ee
is a $C^\infty$ increasing function from $[0,1)$ to $[0,+\infty)$.
The quenched mean particle density at $x$ under $\mu_\lambda^\alpha$ is defined by
\be\label{mean_density_quenched}
R^\alpha(x,\lambda):=\int\eta(x)d\mu_\lambda^\alpha(\eta)
=R\left(\frac{\lambda}{\alpha(x)}\right)
\ee
 In the forthcoming statements, $\eta_0\in\N^{\Z^d}$  denotes the initial particle configuration,
and $(\eta_t^\alpha)_{t\geq 0}$ the evolved quenched process with generator 
\eqref{generator} starting from $\eta_0$ in the environment $\alpha\in{\bf A}$.\\ \\
We can now state our results. First, we establish  a general upper bound which  
improves the result of \cite{fs}.
It is established there  for i.i.d. environments and 
jump kernel $p(.)$ with nonzero drift,  under an additional assumption on the initial configuration:
\be
\label{cond_fs_bis}
\sum_{n\in\N}e^{-\beta n}\sum_{x:\,|x|=n}\eta_0(x)<+\infty,\quad\forall\beta>0
\ee
Here we provide a   shorter  proof of the same result without  assumption
\eqref{cond_fs_bis}, and with an explicit assumption for a fixed environnment, 
which also includes cases with zero drift. 
\begin{theorem}\label{proposition_afgl}
Assume that  a given environment $\alpha$ is such that  
for every $x\in\Z^d$, the limit
\be\label{hyp_upper}
\liminf_{n\to+\infty}\alpha(\check{X}_n^x)=c
\ee
holds a.s., where  $(\check{X}_n^x)_{n\in\N}$ denotes the random walk on 
$\Z^d$ with kernel $\check{p}(.)=p(-.)$
starting from $x$. 
Then, for  
 every $\eta_0\in\N^{\Z^d}$  and every bounded local nondecreasing function $h:\mathbf{X}\to\R$,  
\be\label{upper_bound}
\limsup_{t\to\infty}\Exp h(\eta_t^\alpha)\leq\int_\mathbf{X} h(\eta)d\mu_c^\alpha(\eta)
\ee
\end{theorem}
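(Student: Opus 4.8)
The plan is to bound the expectation $\Exp h(\eta_t^\alpha)$ by a coupling/comparison argument that replaces the true dynamics by one whose invariant measure dominates $\mu_c^\alpha$, and then show that along the time evolution no extra mass can persist near the origin. Since $h$ is nondecreasing and bounded, it suffices to produce, for each fixed finite window $\Lambda\subset\Z^d$ and each $\varepsilon>0$, a coupling showing $\eta_t^\alpha$ restricted to $\Lambda$ is asymptotically stochastically below a configuration distributed as $\mu_c^\alpha$ (up to $\varepsilon$ in total variation on the window). The zero-range process with bounded $g$ is attractive, so the basic coupling (Harris construction, described in the referenced Subsection~\ref{subsec:Harris}) lets me compare two copies started from ordered initial data.

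**The key idea** is a truncation from above: fix a large level $K$ and compare $\eta_t^\alpha$ with the process $\eta_t^{\alpha,K}$ started from $\eta_0\wedge K$ (capping every site at $K$). Because $g$ is nondecreasing and bounded, one checks that the difference process is controlled — more precisely, the discrepancies between $\eta^\alpha$ and $\eta^{\alpha,K}$ behave like a tagged collection of second-class particles performing random walks driven by $\check p$. The environment hypothesis \eqref{hyp_upper} is exactly what guarantees that such a walk, run backwards, visits sites of rate arbitrarily close to $c$ infinitely often; this is the mechanism by which mass ``drains away'' and cannot accumulate. So the first substantive step is: for each $x$, the expected number of discrepancies at $x$ at time $t$ tends to a quantity governed by the invariant density $R^\alpha(x,c)$, not $R^\alpha(x,1)$.

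**The core step** is then to analyze the capped (still infinite-volume) process. Here I would use that $\mu_c^\alpha$ is the maximal invariant measure: its one-site marginal is $\theta_{c/\alpha(x)}$, which for $\alpha(x)>c$ is subcritical, hence has finite and continuous density, while along the $\check p$-walk the $\liminf$ of $\alpha$ being $c$ forces the ``bottleneck'' sites to equilibrate at criticality. Concretely, one shows that starting from any configuration, the law of $\eta_t^\alpha$ on $\Lambda$ is asymptotically dominated by $\mu_\lambda^\alpha$ for every $\lambda$ slightly above $c$; letting $\lambda\downarrow c$ and using the weak continuity \eqref{stoch_inc} of $\lambda\mapsto\mu_\lambda^\alpha$ gives \eqref{upper_bound}. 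The monotone/attractive coupling reduces this to checking an entropy or flux inequality at the slow sites: mass flowing into a site with rate near $c$ leaves at rate $\alpha(x)g(\eta(x))\le g(\eta(x))$, and the balance pins the occupation at the critical marginal.

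**The main obstacle** will be making the ``drain-away'' argument rigorous in infinite volume without a growth assumption on $\eta_0$ — precisely the point where we improve on \cite{fs}, which needed \eqref{cond_fs_bis}. The subtlety is that $\eta_0$ may be huge far from $\Lambda$, so I cannot naively bound the number of particles that could reach $\Lambda$. The resolution I expect: use a finite-speed-of-propagation / attractiveness argument to show that what reaches $\Lambda$ by time $t$ is controlled only by $\eta_0$ on a region whose ``effective size'' the $\check p$-walk can traverse, and that the rate-$c$ bottlenecks encountered along the way throttle the incoming flux regardless of how large $\eta_0$ is behind them — so the cap level $K$ can be sent to infinity last, after $t\to\infty$ and after $\lambda\downarrow c$. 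Getting the order of limits right, and handling the case of zero drift (where the walk $\check X_n^x$ is recurrent and the argument is even more delicate since the walk does not escape to infinity but still must hit slow sites), is where the real work lies.
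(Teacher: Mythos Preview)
Your proposal has a genuine gap: the mechanism you outline does not produce an \emph{upper} bound, and several of the intermediate objects you invoke are not well-defined. First, truncating the initial datum to $\eta_0\wedge K$ gives a process \emph{below} $\eta_t^\alpha$ by attractiveness, so it cannot help you bound $\Exp h(\eta_t^\alpha)$ from above. Second, the measures $\mu_\lambda^\alpha$ exist only for $\lambda\le c$ (one needs $\lambda/\alpha(x)<1$ for all $x$, and $\inf_x\alpha(x)=c$ by \eqref{hyp_upper}), so ``$\lambda$ slightly above $c$'' is not available. Third, discrepancies in a site-disordered zero-range process do not perform random walks with kernel $\check p$; their jump rates depend on the full configuration, so the second-class-particle picture you sketch does not reduce to the hypothesis \eqref{hyp_upper} in any direct way.

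The paper's argument is structurally different and much shorter. It bounds $\eta_t^\alpha$ from above by an \emph{open Jackson network}: one declares every site in
\[
S_{\varepsilon,\delta}:=\{x:\alpha(x)<c+\varepsilon\}\cup\{x:|x|>\delta^{-1}\}
\]
to be a source/sink (i.e.\ sets $\eta\equiv+\infty$ there). Since $\Z^d\setminus S_{\varepsilon,\delta}$ is finite, the restricted process is a positive recurrent finite-state chain whose unique invariant law is an explicit product measure with marginals $\theta_{\lambda(x)/\alpha(x)}$, where $\lambda(x)=\check\Exp_x\,\alpha(\check X_{\check T})$ is given by the hitting-time formula \eqref{solution_system} for the reversed walk stopped on $S_{\varepsilon,\delta}$. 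Hypothesis \eqref{hyp_upper} enters \emph{exactly here}: it guarantees $\check U_\varepsilon<\infty$ a.s., so by choosing $\delta=\delta(\varepsilon)$ one forces the walk to hit a slow site before escaping the ball, whence $\lambda(x)\to c$ on the support of $h$. The domination $\eta_t^\alpha\le\eta_t^{\alpha',S'}$ holds for \emph{any} $\eta_0\in\N^{\Z^d}$ because the source configuration is $+\infty$ on $S'$---this is precisely how the growth condition \eqref{cond_fs_bis} of \cite{fs} is dispensed with, not via finite propagation or a cap level sent to infinity.
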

 In the case of finite nonzero drift, 
if we think of the random walk paths on  large scale as straight lines, 
then \eqref{hyp_upper} means that there are enough slow sites (i.e. with rates 
close to $c$) at infinity opposite the drift direction so that one has a 
$(d-1)$-dimensional barrier of slow sites at infinity. This barrier acts as a source
that carries the critical density and hence bounds the possible densities of the system.  
Notice that if $d=1$  and the drift is nonzero,   a sufficient condition for \eqref{hyp_upper} is 
\be
\label{not_too_sparse}
\liminf_{x\to-\infty}\alpha(x)=c
\ee
 if the drift is to the right, 
or $\liminf_{x\to+\infty}\alpha(x)=c$ if the drift is to the left.
The i.i.d. case studied in \cite{fs} is contained in Assumption 
\eqref{hyp_upper}, as shown by the  following corollary. 
\begin{corollary}\label{cor_upper}Assume the random variables $\alpha(x)$ are 
 $(c,1]$ valued and i.i.d., where $c\in(0,1)$ is  the infimum of 
 the support  of their common distribution. 
 Then \eqref{hyp_upper} is satisfied by a.e. realization of these variables.
\end{corollary}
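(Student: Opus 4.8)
The plan is to exploit the independence of the disorder $\alpha$ and the auxiliary walk $(\check X^x_n)_{n\in\N}$, reducing \eqref{hyp_upper} to an elementary Borel--Cantelli statement. First I would record the trivial half: since $\alpha$ is $(c,1]$-valued, $\alpha(\check X^x_n)>c$ for every $n$, so $\liminf_{n\to\infty}\alpha(\check X^x_n)\ge c$ identically, and only the reverse bound needs an argument. Next, viewing $\alpha$ and the walk as independent on a common probability space $(\Omega,\mathcal F,\Prob)$, it suffices by Fubini's theorem to prove, for each fixed $x\in\Z^d$, that
\be\label{cor_goal}
\Prob\left(\liminf_{n\to\infty}\alpha(\check X^x_n)\le c\right)=1 ;
\ee
indeed, Fubini then yields, for each $x$, that $\liminf_n\alpha(\check X^x_n)=c$ holds almost surely for $\Prob$-a.e.\ realization of $\alpha$, and a countable intersection over $x\in\Z^d$ gives the corollary.

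For \eqref{cor_goal}, discard the degenerate case where $p$ is the point mass at the origin (the dynamics, and the statement, being then trivial); in all other cases the walk $(\check X^x_n)_n$ visits infinitely many distinct sites of $\Z^d$ almost surely, a standard consequence of the recurrence/transience dichotomy for $\Z^d$-valued random walks (a transient walk leaves every finite set, while a recurrent one is site-recurrent on the infinite subgroup it generates). Fix $\varepsilon>0$. Since $c$ is the infimum of the support of the common law of the $\alpha(x)$, we have $p_\varepsilon:=\Prob(\alpha(0)<c+\varepsilon)>0$. Condition on the trajectory of the walk and let $\mathcal R_\infty$ be its (almost surely infinite) set of visited sites, enumerated as $\set{Y_1,Y_2,\ldots}$ through a fixed ordering of $\Z^d$. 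Since $\alpha$ is i.i.d.\ and independent of the walk, conditionally on $\mathcal R_\infty$ the variables $(\alpha(Y_j))_{j\ge1}$ are i.i.d.\ with $\Prob(\alpha(Y_j)<c+\varepsilon\mid\mathcal R_\infty)=p_\varepsilon$; as $\sum_j p_\varepsilon=\infty$, the second Borel--Cantelli lemma gives $\alpha(Y_j)<c+\varepsilon$ for infinitely many $j$, conditionally almost surely and hence almost surely after integrating over the walk. As each $Y_j$ is visited at some time, $\alpha(\check X^x_n)<c+\varepsilon$ infinitely often a.s., so $\liminf_n\alpha(\check X^x_n)\le c+\varepsilon$ a.s.; intersecting over $\varepsilon\in\set{1/k:k\ge1}$ gives \eqref{cor_goal}.

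I expect no real obstacle: the only points deserving care are the clean separation of the two sources of randomness — so that one genuinely obtains \eqref{hyp_upper} for $\Prob$-a.e.\ $\alpha$, not merely jointly — and the almost-sure infiniteness of the walk's range, after which the i.i.d.\ Borel--Cantelli argument is immediate. Should irreducibility (or non-degeneracy) of $p$ be assumed elsewhere in the paper, the parenthetical remark on the degenerate case can simply be omitted.
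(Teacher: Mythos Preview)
Your proposal is correct and follows essentially the same route as the paper: Fubini to exchange the order of the disorder and the walk, almost-sure infiniteness of the walk's range (the paper phrases this as extracting a subsequence $(n_k)$ with pairwise distinct positions $\check X_{n_k}$), and then the i.i.d.\ structure of $(\alpha(y))_y$ along those distinct sites to force the $\liminf$ down to $c$. Your version is simply more explicit---you spell out the trivial $\ge c$ half, the Borel--Cantelli step, and the degenerate kernel caveat---where the paper compresses all of this into two sentences.
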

{}From now on, we let $d=1$, and  consider 
a nearest-neighbour jump kernel with non-zero drift, that is 
\be\label{nn_hyp}
p(1)=p\in(1/2,1],\quad p(-1)=q:=1-p
\ee
This is not a technical artefact: it is indeed shown in \cite{bmrs1} that 
the forthcoming results are wrong for more general kernels.\\ \\
For our main theorem  (Theorem \ref{convergence_maximal} below), 
we now introduce a set of conditions to be satisfied by the environment  $\alpha$. 
 First, the set of slow sites should not be too sparse. To this end we require 
\be\label{assumption_afgl}
\forall\varepsilon\in(0,1),\quad 
\lim_{n\to+\infty}\min\{\alpha(x):\,x\in\Z\cap[-n,-n(1-\varepsilon)]\}=c
\ee
This is equivalent to the existence of a sequence $(x_n)_{n\in\N}$ of sites such that
\be\label{assumption_afgl_2}
x_{n+1}<x_n<0,\quad \lim_{n\to +\infty}\frac{x_{n+1}}{x_n}=1,\quad \lim_{n\to+\infty}\alpha(x_n)=c
\ee
Assumption \eqref{assumption_afgl} implies \eqref{not_too_sparse}.  
Next, we assume existence of an annealed mean density:
\begin{eqnarray}\label{average_afgl}
\overline{R}(\lambda) & := & \lim_{n\to+\infty}
\frac{1}{n+1}\sum_{x=-n}^0 R\left(\frac{\lambda}{\alpha(x)}\right)\\
& = & \lim_{n\to+\infty}\frac{1}{n+1}\sum_{x=0}^n R\left(\frac{\lambda}{\alpha(x)}\right)
\quad\mbox{exists for every }\lambda\in[0,c)\nonumber
\end{eqnarray}
Note that the positive side  (w.r.t. the origin)  of Assumption 
\eqref{average_afgl} is necessary, while \eqref{assumption_afgl} or
 \eqref{assumption_afgl_2}  concerns only the negative side. Roughly speaking, 
 for our approach, we only need to find slow sites at $-\infty$, while we have 
 to prove a hydrodynamic limit statement also (slightly) to the right of $0$ 
 (see Remark \ref{remark_bilateral}).
 It can be shown  (see Lemma \ref{lemma_properties_flux} below and \cite{bmrs1})  that
 $\overline{R}$ is an increasing $C^\infty$ function on $[0,c)$.
We define the critical density by 
\be\label{other_def_critical}
\rho_c:=\overline{R}(c-):=\lim_{\lambda\uparrow c}\overline{R}(\lambda)\in[0,+\infty]
\ee
In the sequel, we extend $\overline{R}$ by continuity to $[0,c]$ by defining 
$$
\overline{R}(c):=\overline{R}(c-)=\rho_c
$$
Note that the value obtained by plugging $\lambda=c$ into \eqref{average_afgl}
 would not be a relevant definition of $\overline{R}(c)$, see \cite{bmrs1} for details.
Our next assumption is finiteness of the critical density:
\be\label{assumption_pt}
\rho_c=\overline{R}(c-)=\lim_{\lambda\uparrow c}\overline{R}(\lambda)
=\overline{R}(c)<+\infty
\ee
Finally, we need the following convexity assumption:\\ \\
(H) For every $\lambda\in[0,c)$, 
$\overline{R}(\lambda)-\overline{R}(c)-(\lambda-c)\overline{R}^{'+}(c)>0$\\ \\
where
\be\label{def_upperdiff}
\overline{R}^{'+}(c):=\limsup_{\lambda\to c}\frac{\overline{R}(c)-\overline{R}(\lambda)}{c-\lambda}
\ee
is the left-hand derivative at $c$ of the convex envelope of $\overline{R}$
(notice that our assumptions do not imply existence of the derivative $\overline{R}'(c)$).\\ \\
For instance, if $R$ is strictly convex, then for any environment 
satisfying \eqref{assumption_afgl}--\eqref{average_afgl}, 
$\overline{R}$ is strictly convex  (see Lemma \ref{lemma_properties_flux} below), 
and thus (H) satisfied. A sufficient condition for $R$ to be strictly convex 
(see \cite[Proposition 3.1]{bs}) is that 
\be\label{convex_micro}
n\mapsto g(n+1)-g(n)\mbox{ is a nonincreasing function}
\ee
Assumption \eqref{assumption_afgl_2}, due to its second condition, 
is slightly stronger than assumption \eqref{not_too_sparse} made in \cite{afgl}. 
Note that both are equivalent if we assume that $\alpha$ is a typical realization 
of an ergodic random environment. However, \eqref{assumption_afgl_2} includes 
environments with zero density of defects, which cannot be obtained as a realization 
of an ergodic disorder with infimum value $c$. A typical example is an environment 
$\alpha(.)$ for which a sequence satisfying \eqref{assumption_afgl_2} exists,  
with  $\lim_{n\to+\infty}n^{-1}x_n=-\infty$,  and $\alpha(x)=1$ for $x\in\Z$ not belonging 
to this sequence. In this case, 
one has $\overline{R}(\lambda)=R(\lambda)$ and $\rho_c=R(c)<+\infty$. 
The second condition in \eqref{assumption_afgl_2} sets a restriction on the admitted
sparsity of defects, in the sense that their empirical density must decay less 
than exponentially in space. We believe that this condition is an artefact 
of our approach, but this is the small price we can currently not avoid 
for extending the result of \cite{afgl} to partial asymmetry and more general functions $g$. \\ \\
We also expect condition (H)  to be  an artefact of our method, and the convergence Theorem
\ref{convergence_maximal} below to hold without it.  A similar condition appears also 
in the proof of \cite{afgl}  (on line 8, page 77),  where it is automatically implied 
by the more stringent restriction $g(n)=\indicator{\{n>0\}}$. \\ \\
 Having introduced the above conditions, we are ready to state the main result of this paper.
\begin{theorem}
\label{convergence_maximal}
Assume  \eqref{nn_hyp},  and that $\alpha$ is a given environment satisfying  
\eqref{assumption_afgl}, \eqref{average_afgl}, \eqref{assumption_pt} and (H). Then, 
for any  $\eta_0\in\N^\Z$  satisfying the supercriticality assumption
\be
\label{cond_init}
\liminf_{n\to\infty}n^{-1}\sum_{x=-n}^0\eta_0(x)\geq \rho_c,
\ee
the quenched process $(\eta_t^\alpha)_{t\geq 0}$ with initial 
state $\eta_0$ converges in distribution to $\mu_c^\alpha$ as $t\to\infty$.
\end{theorem}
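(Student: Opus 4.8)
The plan is to establish Theorem \ref{convergence_maximal} by sandwiching the limiting distribution between $\mu_c^\alpha$ from above and from below. The upper bound is essentially free: Assumption \eqref{assumption_afgl} implies \eqref{not_too_sparse}, which (since $d=1$ and the drift is to the right) gives hypothesis \eqref{hyp_upper} of Theorem \ref{proposition_afgl}; hence for every bounded local nondecreasing $h$ we already have $\limsup_{t\to\infty}\Exp h(\eta_t^\alpha)\leq\int h\,d\mu_c^\alpha$. So the entire content of the theorem is the matching \emph{lower} bound: $\liminf_{t\to\infty}\Exp h(\eta_t^\alpha)\geq\int h\,d\mu_c^\alpha$ for all such $h$. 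Because $\mu_c^\alpha$ is a product measure and the $\theta$-family is an exponential family (cf. \eqref{stoch_inc}), once the upper and lower bounds hold for all bounded local nondecreasing $h$, weak convergence to $\mu_c^\alpha$ follows by a standard monotone class / stochastic-domination argument.

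For the lower bound, I would first use attractiveness and the coupling (Harris construction of Subsection \ref{subsec:Harris}) to replace the given $\eta_0$ by a more convenient configuration below it: by \eqref{cond_init} one can find, along a subsequence, blocks to the left of the origin carrying density arbitrarily close to $\rho_c$, and since $h$ is nondecreasing it suffices to bound from below the evolution started from the minimal such configuration. The core idea is then to run a \emph{source} at a slow site far to the left: pick, via \eqref{assumption_afgl_2}, a slow site $x_n$ with $\alpha(x_n)$ close to $c$, fill it (and a mesoscopic neighborhood) with a large but subcritical-per-site pile of mass, and show that such a site acts as a reservoir injecting particles into the bulk at the critical density. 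This is exactly where Proposition \ref{th_strong_loc_eq} enters: it provides the hydrodynamic limit and quenched strong local equilibrium for the source process, so that the microscopic state in a window around the origin, seen at large times, is close in distribution to $\mu_\lambda^\alpha$ with $\lambda\uparrow c$, i.e. to $\mu_c^\alpha$ by the weak continuity in \eqref{stoch_inc}. The second (non-sparsity) clause of \eqref{assumption_afgl_2}, $x_{n+1}/x_n\to 1$, is what lets the source site be placed at the right macroscopic location (a vanishing fraction of the rescaling distance from $0$), and convexity assumption (H) is used to guarantee that the rarefaction/entropy-solution profile emanating from the source does not drop below the critical value near the origin before the relevant times — it controls the characteristic speeds of the conservation law \eqref{burgers_intro} whose flux $f$ is constant above $\rho_c$.

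Concretely the steps are: (i) reduce, by attractiveness and monotonicity of $h$, to a lower bound for a specially prepared initial configuration supported near a slow site $x_{n}$; (ii) set up the hydrodynamic scaling with $x_n$ playing the role of the macroscopic source location, and invoke Proposition \ref{th_strong_loc_eq} to get that the empirical density solves \eqref{burgers_intro} with a flux that saturates at $\rho_c$, fed by a source of strength $\rho_c$; (iii) analyze the entropy solution to check that the density at the macroscopic point $0$ equals $\rho_c$ for all large times, using (H) to rule out an intermediate undershoot; (iv) upgrade density convergence to distributional (local-equilibrium) convergence, again from Proposition \ref{th_strong_loc_eq}, obtaining $\mu_\lambda^\alpha$ with $\lambda$ as close to $c$ as desired; (v) let $\lambda\uparrow c$ and combine with the upper bound to conclude. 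The main obstacle is step (ii)–(iii): proving and then exploiting the hydrodynamic limit for a genuinely supercritical (source-driven) disordered AZRP, since no off-the-shelf result covers this regime — one must control the quenched flux function $\overline{R}^{-1}$-type object, handle the degeneracy of $f$ above $\rho_c$, and make the strong local equilibrium argument work without translation invariance; this is precisely the technical heart carried out via Proposition \ref{th_strong_loc_eq} and its proof in Section \ref{sec_loc_eq}.
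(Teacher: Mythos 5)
The overall skeleton is the right one (upper bound for free from Theorem \ref{proposition_afgl}, lower bound as the real content, with Proposition \ref{th_strong_loc_eq} and (H) as the machinery), but the mechanism you propose for the lower bound has a genuine gap that the paper explicitly flags and works around. You want to pass to ``a more convenient configuration below $\eta_0$'' carrying supercritical blocks and argue by attractiveness from there. This fails for irregular $\eta_0$: condition \eqref{cond_init} only controls a running average, so $\eta_0$ can concentrate its supercritical mass into distant spikes separated from the origin by arbitrarily long stretches of zeros. Any $\tilde\eta_0\le\eta_0$ inherits those zero stretches, and no initial profile dominated by $\eta_0$ need deliver density $\rho_c$ at the origin by hydrodynamics alone. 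The paper calls this out explicitly in the preamble to Subsection \ref{subsec_proof}, and replaces the ``nice configuration below $\eta_0$'' idea by a comparison that is not a one-sided domination at all: the source configuration $\eta_0^{\alpha,t}=(+\infty)\indicator{\{x\le x_t\}}$ is \emph{above} $\eta_0$ to the left of $x_t=\lfloor\beta t\rfloor$ and \emph{below} it to the right. The ordering near the origin is recovered only via the nearest-neighbour interface Lemma \ref{lemma_interface} (there is a single crossing point $x_s^{\alpha,t}$), combined with a mass/current argument (Lemmas \ref{lemma_total_mass}, \ref{current_critical_2}, \ref{lemma_cdfs}) showing that the interface stays to the left of $A_\varepsilon(\alpha)$ with high probability. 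That interface-plus-current step is entirely absent from your outline, and without it the claim ``$\eta_t^\alpha$ dominates the source process near the origin'' has no proof. This is also the only place the nearest-neighbour restriction \eqref{nn_hyp} is essential.

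A second inaccuracy: your source is ``a large but subcritical-per-site pile of mass at a slow site $x_n$,'' playing the role of a reservoir. In the paper the source is an infinite reservoir at every $x\le x_t$, and $x_t=\lfloor\beta t\rfloor$ is chosen by the macroscopic position $\beta\to -v_0$, \emph{not} by slowness of $\alpha(x_t)$. The slow sites enter in a different way — through $A_\varepsilon(\alpha)$, at which the outgoing current is capped by $(p-q)(c+\varepsilon)$ via \eqref{upperbound_current_source}, which is the key quantitative input to the interface argument. Your picture of a single slow bottleneck acting as a critical-density emitter is closer to the heuristic in the introduction than to the actual proof, and in particular it does not lead to a correct choice of scaling for the source position. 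Finally, you invoke (H) to rule out an ``undershoot'' of the entropy solution, which is in the right spirit, but the specific way it is used is through Lemma \ref{lemma_entropy}: (H) forces $\lambda^-(v)\uparrow c$ as $v\downarrow v_0$, so that the local equilibrium parameter extracted from \eqref{the_first_one} can be pushed up to $c$. Without naming the interface Lemma \ref{lemma_interface} and the current comparison in Subsection \ref{subsec_currents}, steps (i)--(iii) of your plan do not close.
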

 The reader may wonder if there is a simple example of a {\em given} 
 environment $\alpha$ satisfying 
\eqref{average_afgl} and \eqref{assumption_afgl} (or \eqref{assumption_afgl_2}). 
Here is one. Let $Q$ be a probability measure on $(c,1]$.
Divide the set of negative integers into  a disjoint union of intervals 
$I_j=\Z\cap[x_{j+1},x_j-1]$, where $j\in\N$, and
$(x_j)_{j\in\N}$ is a decreasing sequence  such that 
\be\label{length_infinity}\lim_{j\to+\infty}(x_j-x_{j+1})=+\infty\ee
For $k=1,\ldots,x_j-x_{j+1}=|I_j|$, let $x_{j,k}:=x_j-k$. We now define $\alpha(.)$ 
as follows. First, define  $\beta(x)$ for $x\neq 0$ by specifying that, for every 
$j\in\N$ and $k=1,\ldots,x_j-x_{j+1}$, 
$$
\beta(x_{j,k}) =
\beta(-x_{j,k}) := \frac{k}{x_j-x_{j+1}+1}
$$
Then, for every $x\neq 0$, set 
$$
\alpha(x) :=  F_{Q}^{-1}[\beta(x)]
$$
where $F_Q$ is the distribution function of $Q$, defined by 
$F(t):=Q(-\infty,t]$ for $t\in\R$, and $F_Q^{-1}$ its left-continuous inverse defined by
$$
F_Q^{-1}(u):=\inf\{t\in\R:\,F(t)\geq u\}=\sup\{t\in\R:\,F(t)<u\}
$$
To complete this definition, give an arbitrary value in $(c,1]$ to $\alpha(0)$.
Let $t\in[0,1]$. By construction,  given \eqref{length_infinity},  
the number of values $\beta(x)$ not exceeding $t$ for $x\in I_j$ 
is of order $t|I_j|$ as $j\to+\infty$, and
the total length of intervals $I_1,\ldots,I_n$ is $x_0-x_n$. 
Thus
$$
\lim_{n\to+\infty}\frac{1}{|x_n|}\sum_{x=x_n}^{1}{\bf 1}_{\{\beta(x)\leq t\}}=
\lim_{n\to+\infty}\frac{1}{|x_n|}\sum_{x=1}^{-x_n}{\bf 1}_{\{\beta(x)\leq t\}}
=t
$$
We now assume in addition that
\be\label{assume_in_addition}
\lim_{j\to+\infty}\frac{x_{j+1}}{x_j}=1
\ee
Then we have more generally 
$$
\lim_{n\to+\infty}\frac{1}{n}\sum_{x=-n}^{1}{\bf 1}_{\{\beta(x)\leq t\}}=
\lim_{n\to+\infty}\frac{1}{n}\sum_{x=1}^{n}{\bf 1}_{\{\beta(x)\leq t\}}
=t
$$
and thus, since for all $t\in\R$, $\alpha(x)\leq t\Leftrightarrow \beta(x)\leq F_Q(t)$,
$$
\lim_{n\to+\infty}\frac{1}{n}\sum_{x=-n}^{1}{\bf 1}_{\{\alpha(x)\leq t\}}=
\lim_{n\to+\infty}\frac{1}{n}\sum_{x=1}^{n}{\bf 1}_{\{\alpha(x)\leq t\}}
=F_Q(t)
$$
It follows that
\be\label{limit_empirical}
\lim_{n\to+\infty}\frac{1}{n}\sum_{x=1}^n\delta_{\alpha(x)}
=\lim_{n\to+\infty}\frac{1}{n}\sum_{x=-n}^{-1}\delta_{\alpha(x)}=Q
\ee
in the sense of weak convergence of measures. 
Thus \eqref{average_afgl} is satisfied with
\be\label{RQ}
\overline{R}(\lambda):=\int_{(c,1]}R\left(
\frac{\lambda}{\alpha}
\right)Q(d\alpha)
\ee
for every $\lambda\in[0,c)$, since for every such $\lambda$, 
the above integrand is a bounded function of $\alpha\in(c,1]$. Besides,  condition 
\eqref{assumption_afgl_2} is satisfied by our sequence $(x_n)_{n\in\N}$ 
since we assumed \eqref{assume_in_addition}, and the third condition in 
\eqref{assumption_afgl_2} follows from the fact that $\alpha(x_n)=F_Q^{-1}(1/n)$ 
converges to $c$ as $n\to+\infty$.
Note that the function $\overline{R}$ in \eqref{RQ} is also the one we would 
obtain if $\alpha$ were a typical realization of i.i.d. (or more generally 
ergodic) disorder with marginal distribution $Q$ for $\alpha(0)$.\\ \\
The following variant of the above construction shows an example of a {\em random} 
independent but not ergodic environment for which 
\eqref{average_afgl} and \eqref{assumption_afgl} (or \eqref{assumption_afgl_2}) 
hold almost surely. Given a sequence $(x_j)_{j\in\N}$ and intervals $I_j$ as above, 
and a sequence of i.i.d. random variables $(a_i)_{i\in\Z}$ with marginal distribution $Q$, 
we set $\alpha(x)=a_{-j}$ for $x\in I_j$  and $\alpha(x)=a_{j}$ for $x\in -I_j$, 
and define $\alpha(0)$ to be a $(c,1]$-valued random variable independent of $(a_j)_{j\in\Z}$.\\ \\
 Given  the upper bound of Theorem \ref{proposition_afgl}, the remaining  
part of the work will be to prove the following lower bound,  
for which all assumptions of Theorem \ref{convergence_maximal} are in force. 
This will be done   in Sections \ref{sec_proof} and \ref{sec_loc_eq}. 
\begin{proposition}\label{prop_main_work}
Assume  \eqref{nn_hyp},   and that $\alpha$ is a given environment 
satisfying  \eqref{assumption_afgl}, \eqref{average_afgl}, \eqref{assumption_pt}  and  (H). 
Then the following holds: 
for any $\eta_0\in{\N^\Z}$  satisfying \eqref{cond_init}, 
and every bounded local nondecreasing function $h:\mathbf{X}\to\R$,
\be\label{lower_bound}
\liminf_{t\to\infty}\Exp h(\eta_t^\alpha)\geq\int_\mathbf{X} h(\eta)d\mu_c^\alpha(\eta)
\ee
\end{proposition}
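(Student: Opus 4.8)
\section*{Proof proposal for Proposition \ref{prop_main_work}}

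The plan is to obtain the lower bound \eqref{lower_bound} by coupling the process $(\eta_t^\alpha)$ from below with a better-understood ``source'' process and then invoking the hydrodynamic/local-equilibrium result of Proposition \ref{th_strong_loc_eq}. Since $h$ is nondecreasing, it suffices to construct, on the same probability space via the Harris graphical representation of Subsection \ref{subsec:Harris}, an auxiliary process $(\xi_t)_{t\ge 0}$ with $\xi_0\le\eta_0$ so that $\xi_t\le\eta_t^\alpha$ for all $t$, and whose law near the origin converges to $\mu_c^\alpha$. The natural candidate for $\xi$ is a process started from an initial configuration that is supercritical on a large block to the left of the origin and empty elsewhere: condition \eqref{cond_init} guarantees that for every $\delta>0$ there are arbitrarily large $n$ with $\sum_{x=-n}^0\eta_0(x)\ge(\rho_c-\delta)n$, so we may place a density-$(\rho_c-\delta)$ profile on $[-n,0]$ below $\eta_0$. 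Attractiveness of the zero-range dynamics (which holds because $g$ is nondecreasing) propagates the inequality $\xi_0\le\eta_0$ to all later times.

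The core input is then the hydrodynamic behaviour of this source. By Proposition \ref{th_strong_loc_eq}, for the process started from a supercritical profile to the left of the origin, the empirical density solves the conservation law \eqref{burgers_intro} with the disorder-averaged flux, whose derivative vanishes above $\rho_c$ because $\overline R$ is constant (equivalently, because invariant measures are missing above the critical density, so the flux function is constant there); consequently supercritical mass is transported ballistically to $-\infty$ at the drift speed, and the density at the origin relaxes to exactly $\rho_c$, with the microscopic state converging to the critical product measure $\mu_c^\alpha$. Here Assumption \eqref{average_afgl} on the positive side is what lets us control the profile slightly to the right of $0$, Assumption \eqref{assumption_afgl} supplies the slow sites at $-\infty$ that pin the density to $\rho_c$ rather than letting it drain below, and (H) together with \eqref{assumption_pt} is used to rule out an ``undershoot'' — it forces the rarefaction fan emanating from the supercritical region to actually reach density $\rho_c$ at the origin in the limit $t\to\infty$, by controlling the convex envelope of $\overline R$ near $c$. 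Combining: $\liminf_t \Exp h(\eta_t^\alpha)\ge\liminf_t\Exp h(\xi_t)=\int h\,d\mu_{\rho_c-\delta}^{\alpha}$-type quantity, and then letting $\delta\downarrow 0$ and using the weak continuity and stochastic monotonicity \eqref{stoch_inc} of $\mu_\lambda^\alpha$ in $\lambda$ yields \eqref{lower_bound}.

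More concretely, the steps in order are: (i) reduce to nondecreasing local $h$ and fix $\varepsilon>0$; (ii) using \eqref{cond_init}, choose a sequence of initial ``blocks'' and a comparison configuration $\xi_0^{(n)}\le\eta_0$ equal to a near-critical equilibrium configuration on $[-n,0]$; (iii) use the basic coupling of the graphical construction and monotonicity of $g$ to get $\xi_t^{(n)}\le\eta_t^\alpha$; (iv) apply Proposition \ref{th_strong_loc_eq} to identify the limit as $t\to\infty$ (after possibly first letting $n\to\infty$ along a suitable joint scaling, i.e. running the source long enough that the macroscopic picture has settled near the origin) of the law of $\xi^{(n)}_t$ restricted to the support of $h$; (v) verify that this limit is $\mu_c^\alpha$ on that finite window, invoking (H) to exclude a strictly smaller density; (vi) pass to the liminf and let $\varepsilon\downarrow0$ with \eqref{stoch_inc}.

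The main obstacle is step (iv)–(v): unlike in \cite{afgl}, we do not have an explicit invariant measure or an off-the-shelf supercritical hydrodynamic limit, so everything hinges on Proposition \ref{th_strong_loc_eq}, and in particular on the quenched \emph{strong} local equilibrium statement — it is not enough to know the macroscopic density at the origin is $\rho_c$, we must know the microscopic law is the product measure $\mu_c^\alpha$, and we must know this holds as $t\to\infty$ for the \emph{fixed} infinite initial configuration $\eta_0$ (not merely under a rescaling). Bridging the finite-block comparison process to the true process, and interchanging the $n\to\infty$ and $t\to\infty$ limits, is the delicate point; this is where the non-exponential-sparsity part of \eqref{assumption_afgl_2} and condition (H) are genuinely consumed, and where the argument must lean on the results of Section \ref{sec_loc_eq}.
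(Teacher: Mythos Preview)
Your proposal has a genuine gap at the most basic level of the comparison argument, and it stems from a misreading of both Proposition \ref{th_strong_loc_eq} and the structure of the paper's proof.

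You propose to build an auxiliary process $\xi$ with $\xi_0\le\eta_0$, and then use simple attractiveness to get $\xi_t\le\eta_t^\alpha$. But Proposition \ref{th_strong_loc_eq} is \emph{not} about a process started from a finite block of near-critical density; it is about the source process \eqref{def_init_source} with $\eta^{\alpha,t}_0(x)=(+\infty)\indicator{\{x\le x_t\}}$, which has \emph{infinite} mass to the left of $x_t$. This source configuration is \emph{not} below $\eta_0$ (it dominates $\eta_0$ on $(-\infty,x_t]$ and is dominated on $(x_t,\infty)$), so the inequality $\xi_t\le\eta_t^\alpha$ is not a consequence of attractiveness. Conversely, your finite block $\xi_0^{(n)}$ with density $\rho_c-\delta$ on $[-n,0]$ has total mass of order $n\rho_c$; whatever joint scaling of $n$ and $t$ you choose, this process does \emph{not} converge to $\mu_c^\alpha$ at the origin, and Proposition \ref{th_strong_loc_eq} says nothing about it. (Also note that \eqref{cond_init} only gives a bound on the \emph{total} mass $\sum_{-n}^0\eta_0(x)$; it does not allow you to place an equilibrium configuration pointwise below $\eta_0$ on $[-n,0]$.)

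The missing idea is the interface property of one-dimensional nearest-neighbour attractive systems (Lemma \ref{lemma_interface}): if two coupled configurations have a single sign change at time $0$, they have a single sign change at all times. The paper couples $\eta^\alpha$ with the source process $\eta^{\alpha,t}$; initially the sign change is at $x_t=\lfloor\beta t\rfloor$. The real work (Lemma \ref{lemma_asymptotic_order}, proved via the mass/current comparison in Lemma \ref{lemma_total_mass}) is to show that the interface at time $t$ lies to the \emph{left} of $A_\varepsilon(\alpha)$, so that $\eta_t^\alpha\ge\eta^{\alpha,t}_t$ on a neighbourhood of the origin. This is where \eqref{cond_init} is actually used: it guarantees enough incoming current from the left for $\eta^\alpha$ so that its mass on $[\lfloor bt\rfloor,A_\varepsilon(\alpha)]$ exceeds that of the source process, forcing the interface left. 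Only \emph{then} does one apply the local-equilibrium statement \eqref{the_first_one} of Proposition \ref{th_strong_loc_eq} to the source process, together with Lemma \ref{lemma_entropy} and assumption (H), to conclude $\liminf_t\Exp h(\eta^{\alpha,t}_t)\ge\int h\,d\mu_c^\alpha$ (Lemma \ref{lemma_asymptotic_source}). Your outline contains none of this machinery; the interface lemma and the current analysis are not optional refinements but the core of the argument.
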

\section{Proof of Theorem \ref{proposition_afgl} and Corollary \ref{cor_upper}}\label{sec_afgl}
 We start recalling standard material in Subsections \ref{subsec_harris} 
 and \ref{subsec_jackson}, before proceeding to the actual proofs
in Subsection \ref{subsec_actual}.
\subsection{Harris construction and coupling}\label{subsec_harris}\label{subsec:Harris}
Let us recall the Harris construction of the process with infinitesimal generator \eqref{generator}.
We introduce a probability space $(\Omega,\mathcal F,\Prob)$, whose
generic element $\omega$ - called a Harris system (\cite{har}) - of $\Omega$  
is a  locally finite point measure of the form
\be\label{def_omega}
\omega(dt,dx,du,dz)=\sum_{n\in\N}\delta_{(T_n,X_n,U_n,Z_n)}
\ee
 on  $(0,+\infty)\times\Z^d\times(0,1)\times\Z^d$,  
 where $\delta$ denotes Dirac measure,  and $(T_n,X_n,U_n,Z_n)_{n\in\N}$ is a 
$(0,+\infty)\times\Z^d\times(0,1)\times\Z^d$-valued sequence.  
Under the probability measure $\Prob$, $\omega$ is a Poisson measure with 
intensity 
\be\label{def_intensity}\mu(dt,dx,du,dz):=dtdx\indicator{[0,1]}(u)du\, p(z)dz\ee
 In the sequel, the notation $(t,x,u,z)\in\omega$ will mean $\omega(\{(t,x,u,z)\})=1$. 
 We shall also say that $(t,x,u,z)$ is a potential jump event. 
An alternative interpretation of this random point measure is that 
we have three mutually independent families of independent 
random variables $(D^x_k)_{x\in\Z,\,k\in\N}$, $(U^x_k)_{x\in\Z,\,k\in\N}$ and
$(Z^x_k)_{x\in\Z,\,k\in\N}$,  such that $D^x_k$ has exponential distribution 
with parameter $1$, $U_k^x$ has uniform distribution on $(0,1)$, 
$Z^x_k$  has distribution  $p(.)$, and that if we set
\be\label{poisson_one}
T^x_k:=\sum_{j=0}^k D^x_j,
\ee
then, $\Prob$-a.s.,
\be\label{notation_harris}
\omega(dt,dx,du,dz)=\sum_{x\in\Z}\sum_{k\in\N}\delta_{(T_k^x,x,U_k^x,Z^x_k)}
\ee
On  $(\Omega,\mathcal F,\Prob)$,  a c\`adl\`ag process $(\eta_t^\alpha)_{t\geq 0}$ 
with generator \eqref{generator} and initial configuration $\eta_0$ can be constructed
in a unique way so that 
\be\label{rule_1}
\forall(s,x,z,v)\in\omega,\quad
v\leq \alpha(x)g\left[\eta^\alpha_{s-}(x)\right]  \Rightarrow \eta^\alpha_s=(\eta^\alpha_{s-})^{x,x+z}
\ee
and, for all $x\in\Z^d$ and $0\leq s\leq s'$,
\be\label{rule_2}
\omega\left(
(s,s']\times E_x
\right)=0\Rightarrow\forall t\in(s,s'],\,\eta_t(x)=\eta_s(x)
\ee
where
\be\label{def_E_x}
E_x:=\{(y,u,z)\in\Z^d\times(0,1)\times\Z^d:\,
x\in\{y,y+z\}
\}
\ee
(note that the inequality in \eqref{rule_1} implies $\eta_{t-}^\alpha(x)>0$, 
cf. \eqref{properties_g}, thus $(\eta^\alpha_{t-})^{x,x+z}$ is well-defined). 
Equation \eqref{rule_1} says when a potential jump event gives rise to an actual jump, 
while \eqref{rule_2} states that no jump ever occurs outside potential jump events.
For reader's convenience, in Appendix \ref{app:graphical}, we briefly sketch 
this construction and the reason why it yields the same process as \eqref{generator}. \\ \\
When  necessary, random initial conditions are constructed on an auxiliary probability space $\Omega_0$ 
equipped with a probability measure $\Prob_0$. \\ \\
Expectation with respect to $\Prob$ (resp. $\Prob_0$) is denoted by $\Exp$ (resp. $\Exp_0$). 
The product space  $\Omega_0\times\Omega$  is equipped with the product measure and 
$\sigma$-fields (thus  environment, initial particle configuration and Harris system 
are mutually  independent). Joint expectation with respect to the product 
measure  is denoted by  $\Exp_0\Exp$. \\ \\
In the sequel, we shall have to couple different processes with different 
(possibly random) initial configurations and possibly different environments. 
Such couplings will be realized on $\Omega_0\times\Omega$ by using the same 
Poisson clocks for all processes.
The following result is a consequence of the monotonicity assumption on $g$.
\begin{proposition}\label{monotone_harris}
Let $\eta_0^1\in\mathbf{X}$ and $\eta_0^2\in\mathbf{X}$ be two initial configurations,  
$\alpha^1\in{\bf A}$ and $\alpha^2\in{\bf A}$ two environments.
If $\eta_0^1\leq\eta_0^2$, $\alpha^1(x)\leq\alpha^2(x)$ for every $x\in\Z^d$ 
such that $\eta_0^2(x)=+\infty$, and $\alpha^1(x)=\alpha^2(x)$ for every $x\in\Z^d$ such that
$\eta_0^2(x)<+\infty$, then $\eta_t^1\leq\eta_t^2$ for every $t>0$.
\end{proposition}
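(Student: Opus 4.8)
The plan is to prove Proposition \ref{monotone_harris} by a coupling argument on the common Harris system, comparing the two processes event by event and using an induction on successive potential jump times. First I would recall that, because $g$ is bounded, the process is well-defined by the rules \eqref{rule_1}--\eqref{rule_2}, and that on any bounded space-time window only finitely many potential jump events of $\omega$ affect a given site (indeed, for each $x$ the times $T^x_k$ form a rate-$2$ Poisson process through $E_x$ in the nearest-neighbour or, more generally, the bounded-range case). Consequently the potential jump events affecting any finite set of sites can be ordered, $0<s_1<s_2<\cdots$, and between consecutive such times both coupled configurations are constant on that set. It then suffices to show that if $\eta^1_{s-}\le\eta^2_{s-}$ (together with the environment-domination hypothesis preserved along the way), then $\eta^1_s\le\eta^2_s$ at a potential jump event $(s,x,u,z)\in\omega$.

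The core is therefore the single-event check at a potential jump $(s,x,z,v)\in\omega$, which moves a particle from $x$ to $x+z$ in process $i$ precisely when $v\le\alpha^i(x)g(\eta^i_{s-}(x))$. Writing $a=\eta^1_{s-}(x)\le b=\eta^2_{s-}(x)$ and $a'=\eta^1_{s-}(x+z)\le b'=\eta^2_{s-}(x+z)$, there are a few cases. The only coordinates that can change are $x$ and $x+z$. At $x$: the height can only decrease, so if process $2$ does not move but process $1$ does, we get $\eta^1_s(x)=a-1<a\le b=\eta^2_s(x)$, fine; if both move, $a-1\le b-1$; if process $2$ moves but process $1$ does not, we need $a\le b-1$, i.e.\ $a<b$ — but process $1$ not moving while process $2$ does means $v>\alpha^1(x)g(a)$ and $v\le\alpha^2(x)g(b)$, and since $g$ is nondecreasing, if $a=b$ then (using $\alpha^1(x)\le\alpha^2(x)$ only when the common value is finite — note if $b=\eta^2_{s-}(x)<+\infty$ then by hypothesis $\alpha^1(x)=\alpha^2(x)$) this would force $v>\alpha^1(x)g(a)=\alpha^2(x)g(b)\ge v$, a contradiction; hence $a<b$ and the inequality at $x$ is preserved. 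At $x+z$: the height can only increase; the dangerous case is process $1$ increasing while process $2$ does not, which needs $a'+1\le b'$, i.e.\ $a'<b'$; but process $1$ moving requires $v\le\alpha^1(x)g(a)$, hence $a\ge 1$ (so $g(a)\ge g(1)>0$) and also $v\le\alpha^1(x)g(a)\le\alpha^2(x)g(b)$ (again using the domination of environments at $x$, valid since $a\ge1$ forces $b\ge1$, and if $b$ is finite the rates agree, while if $b=+\infty$ then $\alpha^1(x)\le\alpha^2(x)$ by hypothesis), so process $2$ also moves unless $a'\ge b'$ with the infinite-value conventions intervening — here one checks the cases in the displayed conventions for $\eta^{x,y}$ when $\eta(x)$ or $\eta(y)$ is $+\infty$, all of which preserve the order. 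The infinite-occupancy conventions must be handled separately precisely because the environment-domination hypothesis is only assumed where $\eta^2_0(x)=+\infty$; one must therefore also verify that the set of sites carrying value $+\infty$ is nondecreasing in time for each process, so that the hypothesis $\alpha^1\le\alpha^2$ keeps applying exactly where needed and $\alpha^1=\alpha^2$ holds at every finite site of $\eta^2_t$ for all $t$. This monotonicity of the infinite-value set is immediate since a site can lose an infinite value only by a jump out of it that makes it finite — which cannot happen: \eqref{rule_1} with $g(\eta^\alpha_{s-}(x))=g_\infty=1$ moves a particle out, but $\eta^{x,y}=\eta-\delta_x$ keeps the value $+\infty-1=+\infty$ by the stated conventions.

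The main obstacle, and the only place requiring genuine care, is the bookkeeping at sites with value $+\infty$ together with the asymmetric environment hypothesis: one must confirm (a) that the set $\{x:\eta^i_t(x)=+\infty\}$ is nondecreasing in $t$ and contained, for $i=1$, in its counterpart for $i=2$ (this follows from $\eta^1_t\le\eta^2_t$ propagated inductively together with the conventions), so that the hypothesis ``$\alpha^1(x)\le\alpha^2(x)$ on $\{\eta^2_0(x)=+\infty\}$, $\alpha^1(x)=\alpha^2(x)$ elsewhere'' remains exactly what is needed at each step, and (b) that the single-event comparison above goes through in every one of the $+\infty$-conventions listed after \eqref{generator}. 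Once these are settled, a standard limiting argument — exhausting $\Z^d$ by finite boxes and using that on $[0,t]\times(\text{box})$ only finitely many potential jump events occur $\Prob$-a.s.\ — upgrades the single-event induction to: $\eta^1_t\le\eta^2_t$ for all $t>0$, $\Prob$-almost surely, on $\Omega_0\times\Omega$.
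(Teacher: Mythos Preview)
The paper does not give a proof of Proposition \ref{monotone_harris}; it is simply asserted as ``a consequence of the monotonicity assumption on $g$''. Your event-by-event comparison on the common Harris system is the standard way to justify such a statement, and the single-event check you carry out is correct. In particular, the key inequality $\alpha^1(x)g(a)\le\alpha^2(x)g(b)$ whenever $a\le b$ (using $\alpha^1(x)=\alpha^2(x)$ if $b<+\infty$ and $\alpha^1(x)\le\alpha^2(x)$ if $b=+\infty$) is exactly what forces process~2 to jump whenever process~1 does, which is the heart of the matter.

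Two minor points. First, in your discussion of the site $x+z$, the clause ``so process 2 also moves unless $a'\ge b'$'' is confusing: what you have actually shown is that process~2's jump fires whenever process~1's does, unconditionally; the $+\infty$ conventions only affect how the jump updates $\eta(x+z)$, and those subcases all preserve the order, as you then note. Second, the set $\{x:\eta^i_t(x)=+\infty\}$ is in fact \emph{constant} in $t$ (not merely nondecreasing), since none of the conventions after \eqref{generator} ever turns a finite value into $+\infty$ or vice versa; this makes your bookkeeping in point~(a) immediate.

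The only place that deserves a little more care is the localization. Saying ``finitely many events on $[0,t]\times(\mbox{box})$'' is not quite enough on its own, because an event $(s,y,u,z)$ with $y\notin F$, $y+z\in F$ affects $F$ but whether it fires depends on $\eta_{s-}(y)$ outside $F$. The clean fix is exactly the backward-cluster argument of Appendix~\ref{app:graphical}: for each $(T,x)$ the cluster $\check G^{\alpha,x}_T$ is a.s.\ finite, so $\eta^i_T(x)$ is determined by finitely many ordered events and the induction goes through. Equivalently, prove the ordering for the truncated processes $\eta^{\alpha^i,n}$ (which are finite Markov chains) and pass to the limit via \eqref{stationary_convergence}. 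Either route closes the gap.
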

The above proposition contains in particular the monotonicity of Harris 
coupling (which implies attractiveness) for  identical  environments. 
If a process starts with $+\infty$ particles at sites $x\in S$ for some 
$S\subset \Z^d$, $S$ can be viewed as a set of source/sinks, and
$\Z^d\setminus S$  as the set on which particles live. This is the object of the next subsection.
\subsection{Open Jackson networks}\label{subsec_jackson}
Let $S\subset\Z^d$, and $(\eta_t)_{t\geq 0}$ a process with generator \eqref{generator} 
such that $\eta_0(x)=+\infty$ for all $x\in S$. Then $\eta_t(x)=+\infty$ for all $x\in S$ 
and $t\geq 0$, and the process 
$(\eta^{\alpha,S}_t)_{t\geq 0}$, where $\eta_t^{\alpha,S}$  denotes the restriction 
of $\eta^\alpha_t$ to  $\Z^d\setminus S$,  is itself a Markov process on 
$\overline{\N}^{\Z^d\setminus S}$ with generator
\begin{eqnarray}\nonumber
L^{\alpha,S}f(\eta)&=&\sum_{x,y\not\in S}
\alpha(x)p(y-x)g(\eta(x))[f(\eta^{x,y})-f(\eta)]\\
\nonumber & + & \sum_{x\in S,\,y\not\in S} \alpha(x)p(y-x)g(\eta(x))[f(\eta+\delta_y)-f(\eta)]\\
\label{gen_open} & + & \sum_{x\not\in S,\,y\in S} \alpha(x)p(y-x)g(\eta(x))[f(\eta-\delta_x)-f(\eta)]
\end{eqnarray} 
We may identify the process on $\overline{\N}^{\Z^d}$ and the one on 
$\overline{\N}^{\Z^d\setminus S}$, since the restriction of the former 
to $\overline{\N}^S$ is  identically  $+\infty$. Proposition 
\ref{monotone_harris} can be rephrased as follows
 in this setting.
\begin{proposition}\label{monotone_jackson}
If $S\subset S'\subset\Z^d$,  $\alpha=\alpha'$ on $\Z^d\setminus S'$  and 
$\alpha\leq\alpha'$ on $S'$, a process $(\eta_t^{\alpha,S})_{t\geq 0}$ 
with generator $L^{\alpha, S}$ and a process  $(\eta_t^{\alpha',S'})_{t\geq 0}$ 
with generator $L^{\alpha', S'}$
can be coupled in such a way that
\be\label{couple_modified_source_0}
\eta^{\alpha,S}_0\leq\eta^{\alpha',S'}_0\mbox{ on }\Z^d\setminus S'\Longrightarrow\forall t>0,
\eta^{\alpha,S}_t\leq\eta^{\alpha',S'}_t\mbox{ on }\Z^d\setminus S'
\ee 
\end{proposition}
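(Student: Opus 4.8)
The plan is to reduce Proposition \ref{monotone_jackson} to Proposition \ref{monotone_harris} by an explicit construction on the enlarged configuration space $\mathbf{X}=\overline{\N}^{\Z^d}$, where source sites are encoded as sites holding $+\infty$ particles. First I would fix the two processes with generators $L^{\alpha,S}$ and $L^{\alpha',S'}$ and lift them to processes on $\mathbf{X}$: given a configuration $\xi\in\overline{\N}^{\Z^d\setminus S}$ for the first process, define $\widehat\xi\in\mathbf{X}$ by $\widehat\xi(x)=\xi(x)$ for $x\notin S$ and $\widehat\xi(x)=+\infty$ for $x\in S$; similarly for the second process with $S'$ in place of $S$. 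As noted in the paragraph preceding the statement, under the dynamics \eqref{generator} a site initialized at $+\infty$ stays at $+\infty$ forever (this is immediate from the interpretation rules for infinite occupation numbers given after \eqref{generator}), and the restriction of $(\eta_t^\alpha)$ with $\widehat\xi$ as initial data to $\Z^d\setminus S$ is exactly a Markov process with generator $L^{\alpha,S}$ — this is precisely the identification made at the start of Subsection \ref{subsec_jackson}. So the lifted processes are genuine versions of the processes in the statement.

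Next I would set up the coupling. Extend the environments to all of $\Z^d$ (they already are defined on all of $\Z^d$ by hypothesis): we are given $\alpha=\alpha'$ on $\Z^d\setminus S'$ and $\alpha\le\alpha'$ on $S'$. Run both lifted processes on the same Harris system $\omega$ on the common probability space $(\Omega,\mathcal F,\Prob)$ (together with $\Omega_0$ if the initial data are random), using rule \eqref{rule_1}--\eqref{rule_2} for each with its own environment $\alpha$, resp. $\alpha'$. Now I want to apply Proposition \ref{monotone_harris} with $\eta^1$ the first lifted process (environment $\alpha$, source set $S$) and $\eta^2$ the second (environment $\alpha'$, source set $S'$). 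Its hypotheses are: $\eta_0^1\le\eta_0^2$; $\alpha^1(x)\le\alpha^2(x)$ whenever $\eta_0^2(x)=+\infty$; and $\alpha^1(x)=\alpha^2(x)$ whenever $\eta_0^2(x)<+\infty$. The set $\{x:\widehat\xi'_0(x)=+\infty\}$ is exactly $S'$ (assuming, as we may, that the initial data $\eta_0^{\alpha',S'}$ are finite off $S'$ — if not, one first notes $+\infty$-sites are also preserved and absorbs them into the source set, which only helps). On $S'$ we have $\alpha\le\alpha'$ by hypothesis; off $S'$ we have $\alpha=\alpha'$; and $S\subset S'$ guarantees $\widehat\xi_0\le\widehat\xi'_0$ on $S'$ for free (the left side may already be $+\infty$ there), while on $\Z^d\setminus S'$ the hypothesis $\eta^{\alpha,S}_0\le\eta^{\alpha',S'}_0$ gives $\widehat\xi_0\le\widehat\xi'_0$. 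So all hypotheses of Proposition \ref{monotone_harris} are met, and we conclude $\widehat\xi_t\le\widehat\xi'_t$ on all of $\Z^d$ for every $t>0$; restricting to $\Z^d\setminus S'$ yields \eqref{couple_modified_source_0}.

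The only delicate point — and the one I would write out carefully — is the verification that the lift really produces a version of the open-network process with the correct generator, i.e.\ that conditioning on the $+\infty$ values at $S$ (resp.\ $S'$) being frozen does not alter the law of the coordinates in $\Z^d\setminus S$, so that the restriction is Markov with generator \eqref{gen_open}. This is essentially the content of the identification already asserted in the text, and it follows because in the generator \eqref{generator} the only terms touching a site $y\notin S$ are (i) internal jumps among $\Z^d\setminus S$, (ii) jumps from a source $x\in S$ into $y$, which under the rules for infinite occupation become $\eta\mapsto\eta+\delta_y$ at rate $\alpha(x)p(y-x)g(+\infty)=\alpha(x)p(y-x)$, and (iii) jumps from $y$ into a source, which become $\eta\mapsto\eta-\delta_y$; summing these gives exactly \eqref{gen_open}. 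I do not anticipate a real obstacle beyond making this bookkeeping precise; the monotonicity itself is handed to us by Proposition \ref{monotone_harris}, whose own proof (deferred elsewhere) is where the genuine work of the basic coupling lemma lives.
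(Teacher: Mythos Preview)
Your proposal is correct and follows exactly the route the paper intends: the paper does not give a separate proof but simply states that Proposition \ref{monotone_jackson} is Proposition \ref{monotone_harris} ``rephrased in this setting,'' meaning precisely the lift-to-$+\infty$-on-sources identification you spell out. You have made explicit the verification of the hypotheses of Proposition \ref{monotone_harris} (in particular that $\{\eta_0^2=+\infty\}=S'$ and that the environment conditions on and off $S'$ match), which the paper leaves to the reader.
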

The  process defined by \eqref{gen_open} is an open Jackson network, 
whose invariant measures are well-known in queuing theory. 
\begin{proposition}\label{prop_jackson}
Consider the system
\begin{eqnarray}
\lambda(x) & = & \sum_{y\in \Z^d}\lambda(y)p(x-y),
\quad\mbox{for all }x\in\Z^d\setminus S\label{jackson_bulk}\\
\lambda(x) & = & \alpha(x),\quad\mbox{for all }x\in S\label{jackson_boundary}
\end{eqnarray}
Assume $\lambda(.)$ is a solution of \eqref{jackson_bulk}--\eqref{jackson_boundary} such that
\be\label{cond_rec}
\lambda(x)<\alpha(x),\quad\mbox{for all }x\in\Z^d\setminus S
\ee
For any $S$, $\alpha(.)$ and $\lambda(.)$ satisfying 
\eqref{jackson_bulk}--\eqref{jackson_boundary} and \eqref{cond_rec}, 
the product measure $\mu^{\alpha,S,\lambda}$  on $\N^{\Z^d\setminus S}$ 
with marginal $\theta_{\lambda(x)/\alpha(x)}$ at site $x\in\Z^d\setminus S$
is invariant for $L^{\alpha,S}$.\\ \\
If in addition $\Z^d\setminus S$ is finite,
the system \eqref{jackson_bulk}--\eqref{jackson_boundary} has a 
unique solution $\lambda^{\alpha,S}(.)$ given by
\be\label{solution_system}
\lambda^{\alpha,S}(x)=\check{\Exp}_x \alpha(\check{X}_T)\indicator{\{\check{T}<+\infty\}}
\ee
where $\check{T}$ denotes the hitting time of $S$ by the random walk 
$(\check{X}_n)_{n\in\N}$ with reversed kernel 
$\check{p}(x):=p(-x)$, and  $\check{\Exp}_x$   denotes expectation 
with respect to the law $\check{\Prob}_x$ of this random walk starting from $x$.
Besides,
the restriction to  $\N^{\Z^d\setminus S}$  of the process with generator 
\eqref{gen_open} is positive recurrent if and only if this solution 
satisfies  condition \eqref{cond_rec}, in which case 
$\mu^{\alpha,S}:=\mu^{\alpha,S,\lambda^{\alpha,S}}$ is
its unique invariant measure.
\end{proposition}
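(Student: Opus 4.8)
The plan is to treat the three assertions in turn: first invariance of the product measure $\mu^{\alpha,S,\lambda}$ whenever $\lambda(.)$ solves the traffic equations \eqref{jackson_bulk}--\eqref{jackson_boundary} and is subcritical in the sense of \eqref{cond_rec}; then, under the extra hypothesis that $\Z^d\setminus S$ is finite, existence, uniqueness and the probabilistic formula \eqref{solution_system} for the solution; and finally the positive-recurrence dichotomy. For the first part I would verify directly that $L^{\alpha,S}$ applied to the cylinder functions generating the product structure vanishes in expectation against $\mu^{\alpha,S,\lambda}$. The key algebraic input is that $\theta_\nu$ is, by \eqref{eq:theta-lambda}, a reversible-like family for which $\int g(\eta(x))\,d\theta_{\nu} = \nu \int d\theta_\nu$, i.e. the mean exit rate from site $x$ equals $\alpha(x)\cdot(\lambda(x)/\alpha(x)) = \lambda(x)$. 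One then computes the generator action term by term: the bulk term, the source term ($x\in S$, $y\notin S$), and the sink term ($x\notin S$, $y\in S$), and checks that the contributions telescope exactly when $\lambda(.)$ satisfies the conservation relation \eqref{jackson_bulk} at every interior site and the boundary condition \eqref{jackson_boundary} at every source. This is the standard Jackson-network computation (see e.g. \cite{bfl} for the analogous closed-system statement), and I would present it compactly rather than in full.

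For the finite case, consider $N := \Z^d\setminus S$ finite. The traffic equations \eqref{jackson_bulk}--\eqref{jackson_boundary} read, in vector form, $\lambda|_N = P_{N}^{T}\lambda|_N + (\text{inflow from }S)$, where $P_N$ is the substochastic restriction to $N$ of the walk killed on entering $S$. Because $p(.)$ has nonzero drift (indeed, in the one-dimensional application \eqref{nn_hyp}, but more generally whenever the walk is transient or $N$ is finite with $S\ne\emptyset$ reachable), the matrix $P_N$ has spectral radius $<1$, so $(\mathrm{Id}-P_N^T)$ is invertible and the unique solution is $\lambda|_N = (\mathrm{Id}-P_N^T)^{-1}(\text{inflow})$. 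Expanding the Neumann series $\sum_{k\ge0}(P_N^T)^k$ and interpreting each term as a sum over paths of the reversed walk $\check X$ that stay in $N$ for $k$ steps and then hit $S$, one recognizes precisely the expectation $\check\Exp_x[\alpha(\check X_{\check T})\,\indicator{\{\check T<\infty\}}]$; this gives \eqref{solution_system}. (Note $\check T<\infty$ a.s. here since $N$ is finite, but keeping the indicator makes the formula robust.)

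For the last assertion, the restricted process on $\overline\N^N$ with $N$ finite is a continuous-time Markov chain on a countable state space, irreducible on $\N^N$ by the assumptions \eqref{properties_g} ($g(1)>0$) and reachability of $S$. If \eqref{cond_rec} holds for $\lambda^{\alpha,S}$, then by the first part $\mu^{\alpha,S,\lambda^{\alpha,S}}$ is an invariant \emph{probability} measure (each marginal $\theta_{\lambda(x)/\alpha(x)}$ is a genuine probability on $\N$ precisely because $\lambda(x)/\alpha(x)<1$, cf. \eqref{eq:theta-lambda}--\eqref{def_Z}), hence the chain is positive recurrent and this is its unique invariant measure. Conversely, if the chain is positive recurrent, its unique stationary distribution must, by a generator computation applied to local functions $\eta\mapsto\eta(x)$, force the stationary mean currents to satisfy the traffic equations, so the stationary throughput at $x$ equals $\lambda^{\alpha,S}(x)$; finiteness of the stationary occupation numbers then forces $\lambda^{\alpha,S}(x)<\alpha(x)$ at every $x\in N$, i.e. \eqref{cond_rec}.

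The main obstacle I anticipate is the \emph{only if} direction of the recurrence dichotomy: extracting the inequality \eqref{cond_rec} from mere positive recurrence requires showing that the stationary expectation of $g(\eta(x))$ equals the solved throughput $\lambda^{\alpha,S}(x)$ and that finiteness of $\Exp^{\mathrm{stat}}\eta(x)$ is equivalent to that throughput being $<\alpha(x)$ — one must rule out the degenerate possibility that the chain is positive recurrent while some $\lambda^{\alpha,S}(x)=\alpha(x)$. This is handled by the product form again: if $\lambda^{\alpha,S}(x)\ge\alpha(x)$ for some $x$, the putative stationary marginal $\theta_{\lambda(x)/\alpha(x)}$ is not normalizable, and a comparison/coupling argument (or a direct Foster–Lyapunov computation with the linear function $\sum_x \eta(x)$) shows the occupation at $x$ is transient or null-recurrent. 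The rest is routine queuing-theoretic bookkeeping.
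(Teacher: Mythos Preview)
Your proposal is correct, but the paper's own proof is far terser: it simply cites standard queuing-theory results (Pardoux, \cite{par}) for the invariance statement and the positive-recurrence dichotomy, notes that formula \eqref{solution_system} is a known solution of the traffic equations, and only explicitly verifies the one hypothesis needed for uniqueness---that the random walk with kernel $p$ started anywhere in $N=\Z^d\setminus S$ a.s.\ hits $S$ in finite time, which is automatic since $N$ is finite (the degenerate case $p=\delta_0$ is ruled out via the ambient assumption \eqref{hyp_upper}). Your plan to unfold these standard results---a direct generator computation for invariance, Neumann-series inversion of the substochastic matrix $P_N^T$ followed by a path interpretation for \eqref{solution_system}, and a Foster--Lyapunov/comparison argument for the ``only if'' direction of recurrence---is correct and more self-contained, at the cost of being considerably longer than what the paper actually writes. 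One small correction: do not justify the spectral radius of $P_N^T$ being $<1$ via nonzero drift; the proposition is stated in arbitrary dimension with general $p(.)$ and does not assume \eqref{nn_hyp}. The clean statement is that a finite substochastic matrix has spectral radius $<1$ if and only if the absorbing set $S$ is reachable from every state in $N$, which is exactly the exit-from-a-finite-set argument the paper invokes.
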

\begin{proof}{Proposition}{prop_jackson}
The  first  statement follows from standard results (see e.g. \cite{par}). 
For uniqueness we have to verify the assumption
$\Prob_x(T<+\infty)=1$
for every $x\in\Z^d\setminus S$, where
$$
T:=\inf\{n\in\N:\,{X}_n\in S\}\in\overline{\R}
$$
denotes the hitting time of $S$ by the random walk $(X_n)_{n\in\N}$ with kernel $p$, 
and  $\Prob_x$ denotes the law of this random walk starting from $x$.
This follows from the fact that a random walk on $\Z^d$ a.s. leaves any finite set 
in finite time, unless its jump kernel is supported on $\{0\}$.  But since 
$\alpha(x)>c$ for all $x\in\Z^d$, this case is incompatible with assumption 
\eqref{hyp_upper}.  Finally, \eqref{solution_system} is a known 
solution of \eqref{jackson_bulk}--\eqref{jackson_boundary}.
\end{proof}\\ \\
If the solution to \eqref{jackson_bulk}--\eqref{jackson_boundary} does not 
satisfy \eqref{cond_rec}, one cannot define the stationary measure. The following 
corollary shows how to modify the source so that it becomes possible.
\begin{corollary}
\label{cor_jackson}
  Assume $\lambda(.)$ is a solution of \eqref{jackson_bulk}--\eqref{jackson_boundary}. 
Define an augmented source set $S'=S'(\alpha,S,\lambda)$ by
\be\label{modif_S}
S':=S\cup \left\{
x\in\Z^d\setminus S:\,\lambda(x)\geq\alpha(x)
\right\}
\ee
and a modified environment
$\alpha'=\alpha'(\alpha,S,\lambda)$ by 
\be\label{modif_alpha}
\alpha'(x):=\left\{
\ba{lll}
\lambda(x) & \mbox{if} & x\in S'\setminus S\\
\alpha(x) & \mbox{otherwise} & 
\ea
\right.
\ee
Then $\lambda(.)$ satisfies \eqref{cond_rec} if $S$ and $\alpha$ are replaced by $S'$ and $\alpha'$, and
 $\mu^{\alpha',S'}:=\mu^{\alpha',S',\lambda}$ is an invariant measure for $L^{\alpha',S'}$. 
\end{corollary}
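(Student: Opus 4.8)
The plan is to verify the two assertions of Corollary \ref{cor_jackson} directly from the definitions, using Proposition \ref{prop_jackson} as a black box. First I would check that $\lambda(.)$, restricted to the new bulk $\Z^d\setminus S'$, still solves the Jackson system \eqref{jackson_bulk}--\eqref{jackson_boundary} with $S$ and $\alpha$ replaced by $S'$ and $\alpha'$. The boundary equation \eqref{jackson_boundary} is immediate: on $S'\setminus S$ we have $\alpha'(x)=\lambda(x)$ by \eqref{modif_alpha}, and on the original $S$ we have $\alpha'(x)=\alpha(x)=\lambda(x)$ because $\lambda$ already solved \eqref{jackson_boundary} on $S$. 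The bulk equation \eqref{jackson_bulk} is where one must be slightly careful: the original equation $\lambda(x)=\sum_{y\in\Z^d}\lambda(y)p(x-y)$ holds for every $x\in\Z^d\setminus S$, in particular for every $x\in\Z^d\setminus S'\subset\Z^d\setminus S$, and the sum on the right runs over all of $\Z^d$ regardless of how $\Z^d$ is split into source and bulk. Hence the same identity, read with the new partition, says exactly that $\lambda(.)$ solves \eqref{jackson_bulk} for the pair $(S',\alpha')$.

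Next I would check the recurrence condition \eqref{cond_rec} for $(S',\alpha')$: we must show $\lambda(x)<\alpha'(x)$ for every $x\in\Z^d\setminus S'$. But $\Z^d\setminus S'=(\Z^d\setminus S)\setminus\{x:\lambda(x)\geq\alpha(x)\}$ by \eqref{modif_S}, so for $x$ in this set we have $x\notin S$, hence $\alpha'(x)=\alpha(x)$ by \eqref{modif_alpha}, and $x$ is not among the excluded sites, which means precisely $\lambda(x)<\alpha(x)=\alpha'(x)$. This is the entire content of the first claim. Then, having established that $(S',\alpha',\lambda)$ satisfies \eqref{jackson_bulk}--\eqref{jackson_boundary} together with \eqref{cond_rec}, I would invoke the first statement of Proposition \ref{prop_jackson} verbatim: the product measure with one-site marginal $\theta_{\lambda(x)/\alpha'(x)}$ at each $x\in\Z^d\setminus S'$ is invariant for $L^{\alpha',S'}$. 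This product measure is what is denoted $\mu^{\alpha',S',\lambda}$, and the corollary defines $\mu^{\alpha',S'}:=\mu^{\alpha',S',\lambda}$, so the second claim follows at once.

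There is essentially no obstacle here: the corollary is a bookkeeping consequence of Proposition \ref{prop_jackson}, and the only point requiring a moment's thought is that enlarging the source set does not disturb the bulk balance equations, because those equations sum over the whole lattice and do not ``see'' the partition. I would also note that one should not expect (nor does the statement claim) uniqueness of the invariant measure or positive recurrence in general, since the second part of Proposition \ref{prop_jackson} requires $\Z^d\setminus S$ finite and $S'$ need not make $\Z^d\setminus S'$ finite; the corollary asserts only existence of \emph{an} invariant measure, which is all the construction delivers. If one wanted the sharper conclusion, one would additionally have to observe that when $\Z^d\setminus S$ is finite the solution $\lambda^{\alpha,S}$ of \eqref{solution_system} is unique, so applying the construction to it produces the canonical $\mu^{\alpha',S'}$; but I would keep the statement at the level of generality above and leave that remark implicit.
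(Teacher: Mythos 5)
Your proof is correct and follows essentially the same approach as the paper's own (verify that $\lambda$ still solves the Jackson system for $(S',\alpha')$, check \eqref{cond_rec}, then invoke the first part of Proposition \ref{prop_jackson}). Your closing observation is a useful clarification the paper glosses over: the paper's proof remarks that $\Z^d\setminus S'$ is finite ``since $S\subset S'$'', which tacitly presupposes $\Z^d\setminus S$ finite; but as you correctly note, finiteness is not required for the mere invariance claim of the corollary, only for the uniqueness and positive recurrence asserted in the second half of Proposition \ref{prop_jackson}.
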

\begin{proof}{Corollary}{cor_jackson}
This results from the following observations. First, \eqref{cond_rec} 
is satisfied by definitions of $S'$ and $\lambda'$.
 Then $\Z^d\setminus S'$ is finite since $S\subset S'$. 
 Next, if $\lambda$, $\alpha$ and $S$ satisfy \eqref{jackson_bulk}--\eqref{jackson_boundary}, then 
$\lambda'$, $\alpha'$ and $S'$ still do. 
\end{proof}
\mbox{}\\ \\
The point of Corollary \ref{cor_jackson} is that $\alpha'\geq\alpha$ on $S'$.
 Thus, taking $S=\emptyset$ in \eqref{couple_modified_source_0}, we have a coupling 
 of $(\eta_t^\alpha)_{t\geq 0}$ and  $(\eta_t^{\alpha',S'})_{t\geq 0}$
such that
\be\label{couple_modified_source}
\eta^{\alpha}_0\leq\eta^{\alpha',S'}_0\mbox{ on }\Z^d\setminus S'\Longrightarrow\forall t>0,
\eta^{\alpha}_t\leq\eta^{\alpha',S'}_t\mbox{ on }\Z^d\setminus S' 
\ee 
We are now ready for the  proofs of Proposition \ref{proposition_afgl} and Corollary \ref{cor_upper}.
\subsection{Main proofs}\label{subsec_actual}
\begin{proof}{Theorem}{proposition_afgl}
Let $F$ be a finite subset of $\Z^d$ such that $h(\eta)$ depends on $\{\eta(x),\,x\in F\}$.
For $\varepsilon>0$  and $\delta>0$, let 
\be\label{def_source_set}
S_{\varepsilon,\delta}:=\{x\in\Z^d:\,\alpha(x)<c+\varepsilon\mbox{ or }|x|> \delta^{-1}\}
\ee
The complement of  $S_{\varepsilon,\delta}$  
 is finite because of the second condition. Hence, for any starting point 
 $x\in\Z^d\setminus S_{\varepsilon,\delta}$,  the hitting time  $\check{T}_{\varepsilon,\delta}$ 
of $S_{\varepsilon,\delta}$ by the random walk with kernel $\check{p}(.)$  is  a.s. finite. 
We write  $\check{T}_{\varepsilon,\delta}=\min(\check{U}_{\varepsilon},\check{V}_{\delta})$, where
\begin{eqnarray*}
\check{U}_{\varepsilon} & := & \inf\{n\in\N:\,\alpha(\check{X}_n)<c+\varepsilon\}\\
\check{V}_{\delta} & := & \inf\{n\in\N:\,\left|\check{X}_n\right|>\delta^{-1}\}
\end{eqnarray*}
 These hitting times are also $\Prob_x$-a.s. finite for any starting point, since
 $\check{V}_\delta$ is again the exit time from a finite set, while for
  $\check{U}_\varepsilon$ this follows from \eqref{hyp_upper}. 
For every $x\in F$, it holds that
$$
\lim_{\delta\to 0}\check{V}_{\delta}=+\infty,\quad\Prob_x\mbox{-a.s.}
$$
It follows that, 
for each $\varepsilon>0$, we can find $\delta=\delta(\varepsilon)>0$ such that, for every $x\in F$,
\be\label{exit_first_cond}
\lim_{\varepsilon\to 0}\check{\Prob}_x \left(
\check{T}_{\varepsilon,\delta}=\check{U}_{\varepsilon}
\right)=1
\ee
 In the sequel, in all expressions involving $\varepsilon$ and $\delta$, 
 we shall implicitely take $\delta=\delta(\varepsilon)$.
It follows from \eqref{solution_system} and \eqref{exit_first_cond} that 
 if $\lambda_{\varepsilon,\delta}:=\lambda^{\alpha,S_{\varepsilon,\delta}}$, 
\be\label{convergence_lambda}
\lim_{\varepsilon\to 0}
\lambda_{\varepsilon,\delta}(x)=c,\quad\mbox{for all }x\in F
\ee
 We may thus, and will henceforth, take $\varepsilon$ small enough to have
\be\label{F_OK}
\lambda_{\varepsilon,\delta}(x)<\alpha(x),\quad\mbox{for all }x\in F
\ee
Let
 $S'_{\varepsilon,\delta}:=S'(\alpha,S_{\varepsilon,\delta},\lambda_{\varepsilon,\delta})$ 
defined by \eqref{modif_S}, and 
$\alpha':=\alpha'(\alpha,S_{\varepsilon,\delta},\lambda_{\varepsilon,\delta})$  
defined by \eqref{modif_alpha}.
Note that \eqref{F_OK} implies 
\be\label{F_OK_2}
F\subset\Z^d\setminus S'_{\varepsilon,\delta}
\ee
By Corollary \ref{cor_jackson}, $\mu_{\varepsilon,\delta}:=\mu^{\alpha',S'_{\varepsilon,\delta}}$ 
 is an invariant measure for $L^{\alpha',S'_{\varepsilon,\delta}}$.  \\ \\
Let $(\eta^{\alpha',S'_{\varepsilon,\delta}}_t)_{t\geq 0}$ denote the process 
with generator $L^{\alpha',S'_{\varepsilon,\delta}}$, and whose initial configuration 
in $S'_{\varepsilon,\delta}$ is the restriction 
of $\eta_0$ to this set. By Proposition \ref{prop_jackson}, this process converges 
in distribution as $t\to+\infty$ to its invariant measure $\mu_{\varepsilon,\delta}$ defined above.
By \eqref{couple_modified_source}, $\eta_t^\alpha\leq\eta^{\alpha',S'_{\varepsilon,\delta}}_t$ 
on  $\Z^d\setminus S'_{\varepsilon,\delta}$. 
Because $h$ is nondecreasing,  recalling \eqref{F_OK_2}, we have 
$$
\limsup_{t\to \infty} \Exp h(\eta_t^\alpha) \leq
\lim_{t\to \infty} \Exp h(\eta_t^{\alpha',S'_{\varepsilon,\delta}})
=\int_\mathbf{X} h(\eta)d\mu_{\varepsilon,\delta}(\eta)
$$
By \eqref{convergence_lambda},
$$
\lim_{\varepsilon\to 0}\int_\mathbf{X} h(\eta)d\mu_{\varepsilon,\delta}(\eta)
=\int_\mathbf{X} h(\eta)d\mu^\alpha_c(\eta)
$$
This concludes the proof of Theorem \ref{proposition_afgl}.
\end{proof}\\ 
\begin{proof}{Corollary}{cor_upper}
By Fubini's theorem, it is enough to show that, for a.e. random walk path
 $(\check{X}_n)_{n\in\N}$, \eqref{hyp_upper} holds a.s. with respect to 
 the law of the environment. But for a.e. path realization, there exists
  an increasing subsequence $(n_k)_{k\in\N}$ such that
$\check{X}_{n_k}\neq \check{X}_{n_l}$ for $k\neq l$. Assume such a path fixed. 
Since the random variables $\{\alpha(\check{X}_{n_k});\,k\in\N\}$ are i.i.d.
and $c$ is the infimum of their support, \eqref{hyp_upper} holds with probability 
one with respect to the joint law of these variables. 
\end{proof}
\section{Proof of Proposition \ref{prop_main_work}}
\label{sec_proof}
 This proof, 
divided into several parts, is outlined in  
Subsection \ref{subsec_proof}. 
 Subsection \ref{subsec_loc_eq} states new hydrodynamic limit and strong local 
 equilibrium results for a source,  given in Proposition \ref{th_strong_loc_eq},
  which will be established in Section \ref{sec_loc_eq}. These results 
are the main ingredients in the proof of Proposition \ref{prop_main_work}.  
Subsections \ref{subsec_currents} (devoted to currents, for which 
various properties are needed) and \ref{subsec_super} contain proofs of intermediate results. 
\subsection{Outline of proof}\label{subsec_proof}
As a preliminary remark we observe that, by attractiveness, 
it is enough to prove \eqref{lower_bound} for $\eta_0$ 
satisfying \eqref{cond_init}  such that
\be\label{empty_right}\eta_0(x)=0\mbox{ for all }x>0\ee
This will be assumed from now on.\\ \\
Our aim is to derive a lower bound. However,
since $\eta_0$ can be very irregular, for example it could have large spikes  
and long stretches of empty sites, regular configurations (for example 
with subcritical density profiles) may not be useful to obtain bounds using attractiveness. 
Therefore our strategy to prove Proposition \ref{prop_main_work} is 
to compare $\eta_t^\alpha$ in the neighborhood of $0$ to the process $(\eta_s^{\alpha,t})_{s\geq 0}$
with initial configuration  (with the convention $(+\infty)\times 0=0$)
\be\label{def_init_source}
\eta^{\alpha,t}_0(x)  =  (+\infty)\indicator{\{x\leq x_t\}}
\ee
for an appropriate choice of $x_t$. This process is a semi-infinite process 
with a source/sink at $x_t$: with rate $p\alpha(x_t)$, a particle is created at $x_t+1$,
with rate $q \alpha(x_t+1)g(\eta(x_t+1))$ a particle at $x_t+1$ is destroyed.
While $\eta^{\alpha,t}_0$ bounds $\eta_0$ from above in the region 
to the left of $x_t$, we will show that near the origin  $\eta^\alpha_t$  dominates 
$\eta^{\alpha,t}_t$.
Furthermore we will establish that the distribution of $\eta^{\alpha,t}_t$ in
any finite domain around the origin is arbitrarily close to $\mu_c^\alpha$,
and in the limit as $t\to+\infty$ we will then obtain our result. To achieve this,
$x_t$ should be chosen so that in the hydrodynamic limit for the process 
$(\eta_s^{\alpha,t})_{s\geq 0}$, the macroscopic density at the origin dominates
any density lower than $\rho_c$.
\\ \\
We now define quantities relevant for the hydrodynamic limit of that process.
Let
\be\label{def_v0}
v_0:=(p-q)\inf_{\lambda\in [0,c)}\frac{c-\lambda}{\overline{R}(c)-\overline{R}(\lambda)}
\ee
As stated in Lemma \ref{lemma_entropy} below, 
$v_0$ can be interpreted as the speed of a front  of uniform density $\rho_c$ 
issued by  a  source. Assumption (H) is equivalent to the infimum in \eqref{def_v0} 
 being achieved uniquely for $\lambda$ tending to $c$, which in turn is equivalent to 
\be\label{equal_v0}
v_0=(p-q)\overline{R}^{'+}(c)^{-1}\in[0,+\infty)
\ee
where $\overline{R}^{'+}$ was defined in \eqref{def_upperdiff}.
\\ \\
Let  $\varepsilon>0$ and $\beta<-v_0$.
We have in mind that $\beta=\beta(\varepsilon)$ will be a function 
of $\varepsilon$ that tends to $-v_0$ as $\varepsilon\to 0$
(the choice of this function will appear below). 
For the main idea developed in this section, we let $x_t:=\lfloor \beta t\rfloor$, 
with more precision to come on $\beta$. However, for various purposes in the sequel 
of the paper, configurations of the type \eqref{def_init_source} may be used with 
a different choice of $x_t$.
We then proceed in two main steps as follows. We establish the comparison
between $\eta_t^\alpha$ and $(\eta_s^{\alpha,t})_{s\geq 0}$ in Lemma 
\ref{lemma_asymptotic_order}. Then in Lemma \ref{lemma_asymptotic_source}
we derive the result of Proposition \ref{prop_main_work} using the semi-infinite process.\\ \\
 For $\varepsilon>0$, let 
\begin{eqnarray}\label{def:ell}
A_\varepsilon:=A_\varepsilon(\alpha)&=&\max\{x\le 0:\alpha(x)\leq c+\varepsilon\}\\
a_\varepsilon:=a_\varepsilon(\alpha)&=&\min\{x\ge 0:\alpha(x)\leq c+\varepsilon\}\label{def:err}
\end{eqnarray}
It follows from definition \eqref{def:ell} that
\be\label{location}
\lim_{\varepsilon\to 0}A_\varepsilon(\alpha)=-\infty
\ee 
\begin{lemma}\label{lemma_asymptotic_order} 
 Assume  \eqref{nn_hyp},   and that $\alpha$ is a given environment satisfying  
 \eqref{assumption_afgl}, \eqref{average_afgl}, \eqref{assumption_pt}  and  (H).  
 There exists a function $\beta=\beta(\varepsilon)$ such that $\beta<-v_0$, 
 $\lim_{\varepsilon\to 0}\beta(\varepsilon)=-v_0$, and 
\be\label{asymptotic_order}
\lim_{\varepsilon\to 0}\liminf_{t\to+\infty}\Prob\left(
\left\{
\eta_t^\alpha(x)\geq\eta^{\alpha,t}_t(x),\,\forall x\geq A_\varepsilon(\alpha)
\right\}
\right)=1
\ee
\end{lemma}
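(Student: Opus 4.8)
The plan is to compare the true process $\eta_t^\alpha$ with a family of auxiliary \emph{finite} Jackson-network processes $\eta_t^{\alpha,S}$ whose source set $S$ consists of a slow site far to the left, and then let that source site recede to $-\infty$ in a controlled way as $\varepsilon\to 0$. First I would fix $\varepsilon>0$ and, using Assumption \eqref{assumption_afgl_2}, choose from the sequence $(x_n)$ a site $y=y(\varepsilon,t)\le x_t=\lfloor\beta t\rfloor$ with $\alpha(y)$ close to $c$ and with $y$ comparable to $x_t$ (the condition $x_{n+1}/x_n\to 1$ is exactly what lets us place such a slow site at macroscopic position $\beta t$ up to $o(t)$ error). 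The configuration $\eta_0^{\alpha,t}$ of \eqref{def_init_source} puts $+\infty$ on $\{x\le x_t\}$, which dominates $\eta_0$ there by \eqref{empty_right} on the right and trivially on the left; but $\eta_0^{\alpha,t}$ does \emph{not} obviously sit below $\eta_t^\alpha$ near the origin. The key mechanism is that the semi-infinite source process emits particles at the maximal rate allowed by the slow site, so its macroscopic density to the right of the source converges (via the hydrodynamic/local-equilibrium result) to the critical profile travelling at speed $v_0$; the point of choosing $\beta<-v_0$ is precisely that the front issued at $\beta t$ has not yet reached $A_\varepsilon(\alpha)$ by time $t$ when $|\beta|>v_0$, so the region $[A_\varepsilon(\alpha),\infty)$ is still "behind" the front and at density $\ge$ critical.

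The core comparison I would carry out is: (i) couple $\eta_t^\alpha$ from below by a process started from $\eta_0$ truncated to be $0$ off a large box and with a source inserted, using Proposition \ref{monotone_jackson}; (ii) show that near the origin this lower process dominates $\eta_t^{\alpha,t}$. The mechanism for (ii) is a \emph{current} argument: the supercriticality hypothesis \eqref{cond_init} guarantees that the amount of mass crossing any fixed site from the left up to time $t$ is at least the critical current, so that through the left boundary of a neighbourhood of $0$ the true process receives at least as much mass as a critical source would supply; meanwhile the slow site $y$ caps the source process's output at the critical rate. Quantitatively this is where Lemma \ref{lemma_entropy} and the definition \eqref{def_v0} of $v_0$ enter: $v_0$ is the speed of the critical front emitted by a source, and $\beta<-v_0$ makes the inequality strict with room to spare. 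I would make (ii) precise by invoking the strong local equilibrium statement (Proposition \ref{th_strong_loc_eq}) for the source process $\eta^{\alpha,t}$ to say its density near $0$ is at most $\rho_c+o(1)$, and a matching lower current bound for $\eta^\alpha$ coming from \eqref{cond_init}; the asymptotic ordering on the \emph{whole} half-line $\{x\ge A_\varepsilon(\alpha)\}$ then follows because to the left of $A_\varepsilon(\alpha)$ there are only sites with $\alpha>c+\varepsilon$, so the auxiliary source process's mass cannot have propagated past $A_\varepsilon(\alpha)$ faster than the fast-site front, which lags behind.

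Finally I would assemble the limit: for each $\varepsilon$ the coupled inequality $\eta_t^\alpha(x)\ge \eta_t^{\alpha,t}(x)$ for $x\ge A_\varepsilon(\alpha)$ holds with probability tending to $1$ as $t\to\infty$ (this is the content of \eqref{asymptotic_order}), and then $\varepsilon\to 0$ pushes $\beta(\varepsilon)\to -v_0$ and $A_\varepsilon(\alpha)\to-\infty$ by \eqref{location}, so the ordering extends to all of $\Z$ in the limit. The function $\beta(\varepsilon)$ is built so that $-\beta(\varepsilon)>v_0$ strictly for each $\varepsilon$, with slack shrinking to $0$: e.g. $\beta(\varepsilon)=-v_0-\sqrt{\varepsilon}$ or whatever slack is needed to absorb the $o(t)$ displacement of the chosen slow site $y$ and the $o(1)$ errors in the hydrodynamic limit. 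The main obstacle I expect is step (ii) — proving that near the origin the true process dominates the source process — because it requires controlling the true process from below despite the irregularity of $\eta_0$; the current estimate from \eqref{cond_init} and the entropy-solution description of the source front (Lemma \ref{lemma_entropy}, Proposition \ref{th_strong_loc_eq}) are the tools, but matching them precisely, including the microscopic-to-macroscopic passage with a moving source at position $\lfloor\beta t\rfloor$, is the delicate part and is presumably why Lemma \ref{lemma_asymptotic_order} is separated out and its proof deferred.
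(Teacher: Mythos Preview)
Your proposal has a genuine gap: you never identify the key structural tool, namely the \emph{interface property} (Lemma~\ref{lemma_interface}) for nearest-neighbour attractive systems. The statement you need is a \emph{pointwise} inequality $\eta_t^\alpha(x)\geq\eta_t^{\alpha,t}(x)$ for \emph{all} $x\geq A_\varepsilon(\alpha)$. Current estimates, local equilibrium, and density comparisons are macroscopic or averaged statements; they can tell you that the mass of $\eta_t^\alpha$ on an interval is at least as large as the mass of $\eta_t^{\alpha,t}$ there, but they cannot by themselves upgrade this to site-by-site domination. Two configurations can both have density $\rho_c$ on a box and yet be incomparable pointwise.

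The paper's argument works as follows. By construction, $\eta_0\leq\eta_0^{\alpha,t}$ on $\{x\leq x_t\}$ and $\eta_0\geq\eta_0^{\alpha,t}$ on $\{x>x_t\}$: there is a \emph{single} sign change between the two initial configurations, located at $x_t$. The nearest-neighbour interface lemma says this single sign change persists for all time, carried by a moving interface $x_s^{\alpha,t}$. Hence the pointwise ordering on $\{x\geq A_\varepsilon(\alpha)\}$ follows immediately once you show the interface has moved to the left of $A_\varepsilon(\alpha)$, i.e.\ $\Prob(x_t^{\alpha,t}<A_\varepsilon(\alpha))\to 1$. This in turn is reduced to a single \emph{mass} comparison on the interval $[1+\lfloor bt\rfloor,A_\varepsilon(\alpha)]$ for some $b\in(\beta,-v_0)$: if the interface were still $\geq A_\varepsilon(\alpha)$, then on that whole interval one would have $\eta_t^\alpha\leq\eta_t^{\alpha,t}$, hence the $\eta^\alpha$-mass there would be $\leq$ the $\eta^{\alpha,t}$-mass. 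But Lemma~\ref{lemma_total_mass} shows the opposite strict inequality holds with high probability, via the current analysis and hydrodynamics you mention. So your ingredients (currents, \eqref{cond_init}, Proposition~\ref{th_strong_loc_eq}) are indeed used, but only \emph{after} the interface reduction; they are not sufficient on their own.

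Two smaller issues: your remark that ``to the left of $A_\varepsilon(\alpha)$ there are only sites with $\alpha>c+\varepsilon$'' is backwards (it is to the \emph{right} of $A_\varepsilon(\alpha)$, up to $0$, that all sites are fast); and the Jackson-network machinery (Propositions~\ref{monotone_jackson}--\ref{prop_jackson}) is used in the paper for the upper bound of Theorem~\ref{proposition_afgl}, not here. Finally, the function $\beta(\varepsilon)$ is not chosen by an ad hoc formula like $-v_0-\sqrt{\varepsilon}$; it comes out of the quantitative mass estimate in Lemma~\ref{lemma_total_mass}(ii), which is where assumption~(H) is actually exploited to produce the strict gap between the two masses.
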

The limit   \eqref{asymptotic_order} and property \eqref{location} of 
$A_\varepsilon$ imply that, for every bounded, local, nondecreasing
function $h:\mathbf{X}\to\R$, 
\be\label{reduce_to_source}
\liminf_{t\to\infty}\Exp h(\eta_t^{\alpha})\geq\liminf_{t\to\infty}\Exp h(\eta_t^{\alpha,t})
\ee
The next main step is to study the asymptotics of the r.h.s. of \eqref{reduce_to_source}.
\begin{lemma}
\label{lemma_asymptotic_source}
 Assume   \eqref{nn_hyp},  and that $\alpha$ is a given environment satisfying  
 \eqref{assumption_afgl}, \eqref{average_afgl}, \eqref{assumption_pt}  and  (H). 
 Let $\beta=\beta(\varepsilon)$ be as in 
 Lemma \ref{lemma_asymptotic_order}. 
Then, for any bounded local  non-decreasing  function $h:\N^\Z\to\R$,
\be\label{asymptotic_source}
\lim_{\varepsilon\to 0}
\liminf_{t\to+\infty}\Exp h(\eta^{\alpha,t}_t)\geq\int_\mathbf{X}h(\eta)d\mu^\alpha_c(\eta)
\ee
\end{lemma}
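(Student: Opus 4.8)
## Proof proposal for Lemma \ref{lemma_asymptotic_source}

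The plan is to analyze the semi-infinite source process $(\eta_s^{\alpha,t})_{s\geq 0}$ started from the configuration $(+\infty)\indicator{\{x\leq x_t\}}$ with $x_t=\lfloor\beta t\rfloor$ by comparing it, via attractiveness, to a stationary source process and then invoking the hydrodynamic/local-equilibrium Proposition \ref{th_strong_loc_eq}. The key point is that with $\beta<-v_0$ and $\beta(\varepsilon)\to -v_0$, the source at $x_t\sim\beta t$ emits, in the hydrodynamic limit, a rarefaction-type profile whose value at the macroscopic point $0$ equals $\rho_c$ (this is the content of $v_0$ being the front speed of the critical density, Lemma \ref{lemma_entropy} and the reformulation \eqref{equal_v0}). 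Since $0$ lies strictly to the right of the front location $v_0 t$ translated from the source, and $\beta<-v_0$, at time $t$ the density near $0$ is at least $\rho_c-o(1)$ macroscopically, and in fact Proposition \ref{th_strong_loc_eq} upgrades this to strong local equilibrium: the local law of $\eta_t^{\alpha,t}$ near $0$ converges (along the scaling) to $\mu_{\lambda}^\alpha$ with $R(\lambda/\alpha(\cdot))$-average equal to the macroscopic density, i.e. to $\mu_c^\alpha$ in the limit $\varepsilon\to 0$.

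Concretely, first I would fix the local function $h$, depending on coordinates in a finite set $F$, and note that by \eqref{stoch_inc} and weak continuity of $\lambda\mapsto\mu_\lambda^\alpha$ it suffices to show that for every $\lambda<c$ one has $\liminf_{\varepsilon\to 0}\liminf_{t\to\infty}\Exp h(\eta_t^{\alpha,t})\geq \int h\,d\mu_\lambda^\alpha$; then let $\lambda\uparrow c$. Second, for fixed $\lambda<c$ choose the macroscopic picture: the source process with source at macroscopic position $\beta<-v_0(\lambda):=(p-q)(c-\lambda)/(\overline R(c)-\overline R(\lambda))$ produces, by the entropy-solution hydrodynamics of Proposition \ref{th_strong_loc_eq}, a profile that at $x=0$, $t=1$ equals $R$-density at least $\overline R(\lambda)$ — more precisely the Riemann-type solution emanating from a source of density $\rho_c$ has, between the $\rho_c$-front (speed $v_0$) and faster characteristics, all densities above $\overline R(\lambda)$; since $0>\beta\cdot 1$ but $0$ is still behind the $v_0$-front issued from $\beta$, we land in the region of density $\geq\overline R(\lambda)$. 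Third, invoke the strong local equilibrium half of Proposition \ref{th_strong_loc_eq} at the space-time point $(0,t)$: the distribution of $\tau_0\eta_t^{\alpha,t}$ restricted to $F$ converges to that of $\mu_{\lambda'}^\alpha$ for some $\lambda'$ with $\overline R(\lambda')\geq\overline R(\lambda)$, hence $\lambda'\geq\lambda$ (monotonicity of $\overline R$); since $h$ is nondecreasing and $\mu^\alpha_{\cdot}$ is stochastically increasing, $\Exp h(\eta_t^{\alpha,t})\to\int h\,d\mu_{\lambda'}^\alpha\geq\int h\,d\mu_\lambda^\alpha$. Taking $t\to\infty$ then $\varepsilon\to 0$ (so $\beta(\varepsilon)\to-v_0$ and the admissible $\lambda$ can be taken arbitrarily close to $c$) then $\lambda\uparrow c$ gives \eqref{asymptotic_source}.

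There is a technical wrinkle to handle: the source in \eqref{def_init_source} sits at the moving microscopic site $x_t=\lfloor\beta t\rfloor\to-\infty$, so to apply a hydrodynamic statement one should first translate space by $x_t$ (or equivalently state Proposition \ref{th_strong_loc_eq} for a source fixed at the origin and observe the process at macroscopic point $-\beta$, i.e. at microscopic site $\lfloor -\beta t\rfloor\sim |x_t|$ to the right of the source). I would also need the initial datum on the bulk $\Z\setminus\{x\leq x_t\}$ to be irrelevant; here it is taken to be the restriction of $\eta_0$, but by attractiveness one can replace it from below by the empty configuration — monotonicity of the Harris coupling (Proposition \ref{monotone_jackson}) gives $\eta_t^{\alpha,t}\geq$ the process started empty to the right of $x_t$, and the latter is exactly the source process to which the hydrodynamic result applies; this only decreases $\Exp h$, which is the direction we want.

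The main obstacle is Step three: extracting a genuine \emph{strong} local equilibrium (convergence of the full local law to a specific product measure $\mu_{\lambda'}^\alpha$), rather than mere hydrodynamic convergence of densities, at a point inside the rarefaction fan of a disordered supercritical source. This is precisely what Proposition \ref{th_strong_loc_eq} is designed to provide, so the real work is deferred to Section \ref{sec_loc_eq}; granting that proposition, the argument here is the soft attractiveness-plus-continuity reduction sketched above. A secondary subtlety is making sure the chain of limits $t\to\infty$, then $\varepsilon\to 0$, then $\lambda\uparrow c$ is taken in the right order so that the $\beta(\varepsilon)\to -v_0$ degeneracy (where the optimizing $\lambda$ in \eqref{def_v0} tends to $c$, using (H)) lets the attainable density approach $\rho_c=\overline R(c)$ without ever needing $\overline R$ evaluated at $c$ via \eqref{average_afgl}.
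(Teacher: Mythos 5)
Your plan is essentially the paper's own argument: apply the strong local equilibrium statement \eqref{the_first_one} of Proposition~\ref{th_strong_loc_eq} at the macroscopic point $v=-\beta$ to get $\liminf_t\Exp h(\eta_t^{\alpha,t})\ge\int h\,d\mu^\alpha_{\lambda^-(-\beta)}$, then use Lemma~\ref{lemma_entropy}\emph{(ii)} (with hypothesis~(H)) to conclude $\lambda^-(-\beta)\to c$ as $-\beta\downarrow v_0$, and close with weak continuity \eqref{stoch_inc}. The technical content is indeed deferred to Proposition~\ref{th_strong_loc_eq}, exactly as you say.

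Two small inaccuracies are worth flagging, though neither is fatal. First, your intermediate reduction to a fixed $\lambda<c$, the auxiliary quantity $v_0(\lambda)$, and the ``Riemann fan'' discussion are unnecessary detours: \eqref{the_first_one} already delivers the precise density parameter $\lambda^-(-\beta)$ near the origin, and Lemma~\ref{lemma_entropy} says this tends to $c$; no monotone comparison through intermediate $\lambda$'s is needed, and your remark that $0$ lies ``behind'' the $v_0$-front issued from $\beta$ has the geometry backwards (with $\beta<-v_0$, the $v_0$-front at time $1$ sits at $\beta+v_0<0$, so the origin is \emph{ahead} of it, in the fan where the density is $\mathcal R(-\beta)<\rho_c$ but $\to\rho_c$ as $\beta\to -v_0$). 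Second, the bulk initial datum of the source process in \eqref{def_init_source} is the empty configuration (note the convention $(+\infty)\times 0=0$), not a restriction of $\eta_0$, so the attractiveness comparison you invoke to ``replace it from below by the empty configuration'' is vacuous here.
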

\begin{remark}\label{recall_epsilon}   The dependence on 
$\varepsilon$ of the l.h.s. of \eqref{asymptotic_source} is hidden 
in the initial configuration $\eta^{\alpha,t}_0$ given by 
\eqref{def_init_source}, where $x_t=\lfloor \beta(\varepsilon) t \rfloor$.
\end{remark}
The combination of \eqref{reduce_to_source} and \eqref{asymptotic_source} 
implies Proposition \ref{prop_main_work}.
\\ \\
We now give the main lines of the proof of Lemma \ref{lemma_asymptotic_order},
 parts of which will be completed in the next sections.
Next, to conclude the outline, we will explain the main idea 
for the proof of Lemma \ref{lemma_asymptotic_source}, which will be 
carried out at the end of Subsection \ref{subsec_loc_eq}.\\
\begin{proof}{Lemma}{lemma_asymptotic_order}
 We rely on the following interface property of nearest-neighbour attractive  systems, for which, 
more generally (see e.g. \cite[Lemma 4.7]{lig} or \cite[Lemma 6.5]{rez}),
the number of sign changes between the difference of two coupled configurations 
in  nearest-neighbour attractive systems is$  $ a nonincreasing function of time. For self-containedness
a proof is given in Appendix \ref{app_lemmas}. 
\begin{lemma}
\label{lemma_interface}
 Let $(\zeta^\alpha_s)_{s\geq 0}$ and $(\varpi^\alpha_s)_{s\geq 0}$ be
 two processes coupled via the Harris system \eqref{def_omega}. Assume 
 there exists a (possibly random) location
$x_0$ such that $\zeta_0(x)\leq \varpi_0(x)$ for $x\leq x_0$, and 
$\zeta_0(x)\geq\varpi_0(x)$ for $x>x_0$. Then there exists a 
piecewise constant c\`adl\`ag $\Z$-valued process $(x_s^\alpha)_{s\geq 0}$, 
with nearest-neighbour jumps, such that $x_0^\alpha=x_0$, and
for all $s\geq 0$, $\zeta_s(x)\leq \varpi_s(x)$ for $x\leq x_s^\alpha$,
 and $\zeta_s(x)\geq\varpi_s(x)$ for $x>x_s^\alpha$.
\end{lemma}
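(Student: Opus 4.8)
The plan is to track the crossing point of the two coupled configurations dynamically along the Harris system \eqref{def_omega}. Since $(\zeta^\alpha_s,\varpi^\alpha_s)$ is constant between consecutive potential jump events, and these events are a.s.\ locally finite with pairwise distinct times, it suffices to describe the effect of a single event $(t,x,u,z)\in\omega$ on a configuration pair having a single sign change, and then to iterate. So I would assume that at time $t-$ there is a site $y$ with $\zeta_{t-}(w)\le\varpi_{t-}(w)$ for $w\le y$ and $\zeta_{t-}(w)\ge\varpi_{t-}(w)$ for $w>y$, and consider a potential jump event at $x$ in direction $z\in\{-1,+1\}$ (nearest-neighbour by \eqref{nn_hyp}). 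The basic mechanism, immediate from \eqref{rule_1} and the monotonicity of $g$, is that at a site $w$ where $\zeta_{t-}(w)\le\varpi_{t-}(w)$ the event triggers an actual jump of the $\zeta$-particle only if it also triggers one of the $\varpi$-particle, and symmetrically when the inequality is reversed.

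I would then split into cases according to the position of the bond $\{x,x+z\}$ relative to $y$. If both endpoints lie in $\{w\le y\}$: either both particles jump, leaving the two affected differences unchanged; or only the $\varpi$-particle jumps, in which case $\zeta_{t-}(x)<\varpi_{t-}(x)$, so $\varpi_t(x)\ge\zeta_t(x)$ after losing one particle and $\varpi_t(x+z)\ge\varpi_{t-}(x+z)\ge\zeta_t(x+z)$ after gaining one; either way $\zeta_t\le\varpi_t$ still holds on $\{w\le y\}$, the half-line $\{w>y\}$ is untouched, and the crossing point remains $y$. The case of a bond with both endpoints in $\{w>y\}$ is symmetric. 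The only delicate case is a bond straddling $y$, namely $\{x,x+z\}=\{y,y+1\}$. For $x=y$, $z=+1$ the mechanism above forbids ``only $\zeta$ jumps''; ``neither'' and ``both'' leave all differences unchanged; and ``only $\varpi$ jumps'' yields $\varpi_t(y)\ge\zeta_t(y)$ and $\varpi_t(y+1)=\varpi_{t-}(y+1)+1$, so the single sign change persists, its location staying at $y$ when $\zeta_{t-}(y+1)>\varpi_{t-}(y+1)$ and moving to $y+1$ when $\zeta_{t-}(y+1)=\varpi_{t-}(y+1)$. The case $x=y+1$, $z=-1$ is symmetric and moves the crossing point to $y$ or to $y-1$.

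Hence after any single event there is again a unique sign change, at distance at most one (nearest-neighbour step) from the previous one. Setting $x_0^\alpha:=x_0$ and updating $x^\alpha$ at each potential jump event by the above rules defines a piecewise constant c\`adl\`ag $\Z$-valued process with nearest-neighbour jumps such that, by construction, $\zeta_s\le\varpi_s$ on $\{w\le x_s^\alpha\}$ and $\zeta_s\ge\varpi_s$ on $\{w>x_s^\alpha\}$ for all $s\ge0$. Two points I would check in passing: that this inductive definition is consistent for all $s$ (the events that actually move the interface, based at $x_{s-}^\alpha$ or $x_{s-}^\alpha+1$, do not accumulate in finite time, by the locality and local finiteness built into the Harris construction); and that the case analysis is unchanged when $\zeta$ or $\varpi$ takes the value $+\infty$, using $g(+\infty)=g_\infty$ and the conventions stated after \eqref{generator} for jumps at or into sites carrying infinitely many particles.

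The main obstacle is precisely the straddling-bond case: one has to verify that such an event can never split the single interface into two, and this is exactly where the nearest-neighbour assumption \eqref{nn_hyp} and the monotonicity of $g$ enter; the attendant bookkeeping --- identifying the direction of motion and that it is by at most one lattice site --- is where the content of the lemma lies, the reduction to single events and the handling of infinite occupation numbers being routine.
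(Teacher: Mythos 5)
Your proposal is correct and is essentially the paper's own argument: track the crossing point along the Harris construction, reduce to the effect of a single potential jump event, and verify by case analysis (using attractiveness/monotonicity of $g$ for bonds inside one half-line, and a direct check for the straddling bond $\{y,y+1\}$) that a unique sign change persists and moves by at most one lattice site. The only cosmetic difference is that you spell out explicitly that events whose bond lies entirely on one side of the interface are handled by attractiveness, whereas the paper's proof focuses directly on the events at or adjacent to $x_{s-}$ and leaves the rest implicit.
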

We apply this lemma to $\zeta_s^\alpha=\eta_s^\alpha$ and 
$\varpi_s^\alpha=\eta_s^{\alpha,t}$. In this context, we denote 
the location $x_s^\alpha$ of the lemma by $x_s^{\alpha,t}$.
Then, to establish \eqref{asymptotic_order}, it is enough to show that
\be\label{prob_goodside_0}
\lim_{\varepsilon\to 0}\liminf_{t\to\infty}\Prob\left(\left\{
x_t^{\alpha,t}<A_\varepsilon(\alpha)
\right\}\right)=1
\ee
 Let $b\in(\beta,-v_0)$. On the event $\{x_t^{\alpha,t}\geq A_\varepsilon(\alpha)\}$, 
 by Lemma \ref{lemma_interface}, we have
$$\sum_{x=1+\lfloor bt \rfloor}^{A_\varepsilon(\alpha)}
\eta_t^\alpha(x)\leq \sum_{x=1+\lfloor bt \rfloor}^{A_\varepsilon(\alpha)}\eta_t^{\alpha,t}(x),$$
Therefore, to establish \eqref{prob_goodside_0}, it is enough to prove that  
there exist functions $\beta=\beta(\varepsilon)<b=b(\varepsilon)<-v_0$ such that
$\lim_{\varepsilon\to 0}\beta(\varepsilon)=0$, and 
\be\label{prob_goodside}
\lim_{\varepsilon\to 0}\limsup_{t\to+\infty}\Prob_0\otimes\Prob\left(
\sum_{x=1+\lfloor bt \rfloor}^{A_\varepsilon(\alpha)}\eta_t^\alpha(x)
\leq \sum_{x=1+\lfloor bt \rfloor}^{A_\varepsilon(\alpha)}\eta_t^{\alpha,t}(x)
\right)=0
\ee 
(recall Remark \ref{recall_epsilon}). 
The limit \eqref{prob_goodside}  is a consequence of the following lemma. 
\begin{lemma}\label{lemma_total_mass}
\mbox{}\\ 
(i) For every $\varepsilon>0$ and $b<-v_0$, there exists a family 
$(\delta_t)_{t>0}$ of nonnegative random 
variables on $(\Omega_0,\mathcal F_0,\Prob_0)$, such that  
$\lim_{t\to+\infty}\delta_t=0$  in $\Prob_0$-probability, and
\be\label{left_inside}
\limsup_{t\to\infty}
\Exp_0\Exp\left[
\left(
t^{-1}\sum_{x=1+\lfloor bt \rfloor}^{A_\varepsilon(\alpha)}\eta_t^\alpha(x)+b\rho_c
\right)^--\delta_t
\right]^+\leq  2\varepsilon-b[\rho_c-\overline{R}(c-\varepsilon)]
\ee
(ii)  For small enough $\kappa>0$, there exist $b=b_\kappa$ and
 $\beta=\beta_\kappa$ such that $\beta_\kappa<b_\kappa<-v_0$,
  $\lim_{\kappa\to 0}(b_\kappa-\beta_\kappa)=0$, and 
\be\label{upper_bound_for_xi}
\limsup_{t\to+\infty}\Exp_0\Exp\left[
t^{-1}\sum_{x=1+\lfloor b_\kappa t \rfloor}^{0}\eta_t^{\alpha,t}(x)+b_\kappa\rho_c+\kappa
\right]^+=0
\ee 
\end{lemma}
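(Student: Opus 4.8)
The plan is to prove the two parts by quite different routes. For part (i), the key observation is that the quantity $t^{-1}\sum_{x=1+\lfloor bt\rfloor}^{A_\varepsilon(\alpha)}\eta_t^\alpha(x)$ measures the net mass transported across a spatial window whose left edge moves at speed $b<-v_0$; conservation of particles lets me rewrite this sum in terms of currents through the two space-time lines $x=bt$ and $x=A_\varepsilon(\alpha)$ (roughly fixed near the origin on the relevant scale) plus the initial mass in the window. Using \eqref{cond_init} and \eqref{empty_right}, the initial mass in $[\lfloor bt\rfloor, A_\varepsilon]$ is asymptotically at least $|b|\,t\,\rho_c$, which is where the term $b\rho_c$ comes from. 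The current across the moving line $x=bt$ should, in the hydrodynamic limit, be controlled by the flux picture: since $b<-v_0$, the moving observer sees density at most $\overline{R}(c-\varepsilon)$ (by the interpretation of $v_0$ as the front speed of the source, Lemma \ref{lemma_entropy}), producing the $-b[\rho_c-\overline{R}(c-\varepsilon)]$ term; the current through $x\approx A_\varepsilon(\alpha)$ contributes an error bounded by $O(\varepsilon)$ because near a site of rate $\le c+\varepsilon$ the current is nearly $c$. The random variable $\delta_t$ absorbs the discrepancy between the true (random, disordered) currents and their hydrodynamic values, and the claim $\delta_t\to 0$ in $\Prob_0$-probability will follow from the currents' law of large numbers. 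I would set this up carefully using the current estimates of Subsection \ref{subsec_currents} and the hydrodynamic input of Proposition \ref{th_strong_loc_eq}, applied to the auxiliary process rather than to $\eta^\alpha$ directly.

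For part (ii), the process $(\eta^{\alpha,t}_s)_{s\ge0}$ is the source process started from \eqref{def_init_source} with source at $x_t=\lfloor\beta t\rfloor$, which is exactly the process whose hydrodynamic limit is furnished by Proposition \ref{th_strong_loc_eq}. The quantity $t^{-1}\sum_{x=1+\lfloor b_\kappa t\rfloor}^{0}\eta^{\alpha,t}_t(x)$ is then (at leading order) $\int_{b_\kappa}^{0}\rho(1,y)\,dy$, where $\rho(\cdot,\cdot)$ is the entropy solution of \eqref{burgers_intro} with flux constant $=f(\rho_c)$ above $\rho_c$, issued by a source at macroscopic position $\beta_\kappa$ moving at speed $\beta_\kappa<-v_0$. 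Because the source emits the critical front at speed $-v_0>\beta_\kappa$, at time $1$ the density profile on $[\beta_\kappa,0]$ is a rarefaction/constant pattern bounded above by $\rho_c$ on most of the interval, and the integral $\int_{b_\kappa}^0\rho(1,y)\,dy$ is strictly less than $|b_\kappa|\rho_c$ by a definite amount; choosing $b_\kappa,\beta_\kappa$ close to $-v_0$ with $b_\kappa-\beta_\kappa\to0$ makes this deficit exceed $\kappa$, which forces the bracket's positive part to vanish in the limit. The main point is that the explicit self-similar solution issued by a source has mean density on the trailing window strictly below $\rho_c$.

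The main obstacle is part (i): it requires turning the pointwise-in-environment current identities into a hydrodynamic-type upper bound for the flux seen by a fast-moving observer, and doing so \emph{without} a full hydrodynamic limit for $\eta^\alpha$ itself (which is unavailable in the supercritical regime — this is precisely the difficulty flagged in the introduction). I expect the resolution to go through comparison with the source process via Lemma \ref{lemma_interface} and attractiveness, reducing all hydrodynamic statements needed to Proposition \ref{th_strong_loc_eq}, together with careful bookkeeping of the boundary current near $A_\varepsilon(\alpha)$ — controlling that term is where the $2\varepsilon$ on the right-hand side is spent, and getting the constant right (rather than merely $o(1)$) is the delicate accounting. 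Part (ii), by contrast, is essentially a computation with the known entropy solution once Proposition \ref{th_strong_loc_eq} is granted.
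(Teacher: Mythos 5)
Your part (ii) captures the paper's argument reasonably well: it is exactly \eqref{hdl_source} applied over the window, giving the macroscopic mass $\int_0^{-\beta}\mathcal R(z)\,dz$, and the choice \eqref{such_a_choice} of $b_\kappa$ makes this fall below $-b_\kappa\rho_c-\kappa$ because $\mathcal R$ drops strictly below $\rho_c$ past $v_0$.

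Your account of part (i), however, misidentifies the central mechanism. The paper does not control the incoming current by a ``moving observer sees density $\le\overline{R}(c-\varepsilon)$'' argument, does not use $b<-v_0$, Lemma \ref{lemma_entropy}, or the interface lemma anywhere in (i), and does not compare $\eta^\alpha$ with the source process for this part. Instead, the key auxiliary object is a \emph{subcritical stationary process} $\xi^{\alpha,c-\varepsilon}_\cdot\sim\mu^\alpha_{c-\varepsilon}$ coupled via the same Harris system. The decomposition \eqref{insert_remove} inserts and removes $\xi^{\alpha,c-\varepsilon}_0$; the term $-b[\rho_c-\overline{R}(c-\varepsilon)]$ is simply the \emph{initial} mass of $\xi^{\alpha,c-\varepsilon}_0$ on the window ($\approx -b\,\overline{R}(c-\varepsilon)$, by Lemma \ref{lemma_inv_meas}) measured against the target density $\rho_c$ — it has nothing to do with what a moving observer sees, nor with the speed $v_0$. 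The incoming current $\Gamma^\alpha_{\lfloor bt\rfloor}(t,\eta_0)$ is across a \emph{fixed} site (at scale $t$), and it is bounded below by the equilibrium current $(p-q)(c-\varepsilon)$ of the coupled stationary process through Lemma \ref{lemma_current}/Corollary \ref{corollary_current} and Lemma \ref{current_critical}, up to the cumulative-distribution correction handled by Lemma \ref{lemma_cdfs}. The outgoing current at $A_\varepsilon(\alpha)$ is bounded by $(p-q)c+\varepsilon$ via Lemma \ref{current_critical_2} — that part of your sketch is right.

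Relatedly, $\delta_t$ is \emph{not} a discrepancy between random currents and hydrodynamic values; it is $\delta_b(t,\eta_0,\xi^{\alpha,c-\varepsilon}_0)^-$, the negative part of the initial mass gap between $\eta_0$ and the stationary configuration over the window. Its vanishing comes from the supercriticality assumption \eqref{cond_init} together with the law of large numbers for $\mu^\alpha_{c-\varepsilon}$ (Lemma \ref{lemma_inv_meas}), via \eqref{limit_term3} — not from a currents LLN. So the missing idea in your part (i) is the comparison with a subcritical stationary measure and the attendant cumulative-distribution/current machinery of Subsection \ref{subsec_currents}; without it there is no route to the exact constant $2\varepsilon-b[\rho_c-\overline{R}(c-\varepsilon)]$ rather than an uncontrolled $o(1)$.
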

{\em Proof of \eqref{prob_goodside}.}
Let $\varepsilon>0$, and $\delta(\varepsilon)$ denote the r.h.s. of \eqref{left_inside}. 
By definition \eqref{other_def_critical} of $\rho_c$, we have 
$\lim_{\varepsilon\to 0}\delta(\varepsilon)=0$. Let
 $b:=b_\kappa=:b(\varepsilon)$ and $\beta:=\beta_\kappa=:\beta(\varepsilon)$ be given by
 Lemma \ref{lemma_total_mass}{\it (ii)} for $\kappa=2\delta(\varepsilon)^{1/2}$.
We set
$$
S_t:  =  t^{-1}\sum_{x=1+\lfloor bt \rfloor}^{A_\varepsilon(\alpha)}\eta_t^\alpha(x)+b\rho_c,\quad
S'_t  :=  t^{-1}\sum_{x=1+\lfloor bt \rfloor}^{A_\varepsilon(\alpha)}\eta_t^{\alpha,t}(x)+b\rho_c
$$
Then
\begin{eqnarray*}
&&\Prob_0\otimes\Prob\left(
S_t<-\frac{5}{4}\delta(\varepsilon)^{1/2}
\right)\\
& \leq & 
\Prob_0\otimes\Prob\left(
\delta_t>\frac{\delta(\varepsilon)^{1/2}}{4}
\right)+
\Prob_0\otimes\Prob\left(
S_t<-\frac{5}{4}\delta(\varepsilon)^{1/2}; \,
\delta_t>\frac{\delta(\varepsilon)^{1/2}}{4}
\right)\\
& \leq & \Prob_0\otimes\Prob\left(
\delta_t>\frac{\delta(\varepsilon)^{1/2}}{4}
\right)+
\Prob_0\otimes\Prob\left(
\left[S_t^--\delta_t\right]^+\geq\delta(\varepsilon)^{1/2}
\right)
\end{eqnarray*}
Thus, by Markov inequality and \eqref{left_inside},
\be
\limsup_{t\to+\infty}\Prob_0\otimes\Prob\left\{
S_t<-\frac{5}{4}\delta(\varepsilon)^{1/2}
\right\}\leq\delta(\varepsilon)^{1/2}\label{markov_obtain}
\ee
 On the other hand, 
\begin{eqnarray*}
&&\Prob_0\otimes\Prob\left(
S'_t>-\frac{7}{4}\delta(\varepsilon)^{1/2}
\right) \\
&&\qquad \leq  \Prob_0\otimes\Prob\left\{
\left(
t^{-1}\sum_{x=\lfloor bt\rfloor}^0\eta_t^{\alpha,t}(x)+b\rho_c+2\varepsilon
\right)^+\geq\frac{\delta(\varepsilon)^{1/2}}{4}
\right\}
\end{eqnarray*}
Hence, by \eqref{upper_bound_for_xi} and Markov inequality,
\be\label{choice_yields}
\limsup_{t\to+\infty}\Prob_0\otimes\Prob\left\{
S'_t>-\frac{7}{4}\delta(\varepsilon)^{1/2}
\right\}=0
\ee
The result follows from \eqref{markov_obtain} and \eqref{choice_yields}.
\end{proof}
\mbox{}\\ \\
The proof of Lemma \ref{lemma_total_mass}, which will be 
given in Subsection \ref{subsec_super} below, is based on 
the analysis of currents (see Subsection  \ref{subsec_currents}) and 
on Proposition \ref{th_strong_loc_eq}. 
\\ \\
The idea can be sketched as follows. 
To establish \eqref{left_inside}, we consider for $(\eta_t^\alpha)_{t\geq 0}$ 
the incoming current
at site $1+\lfloor bt\rfloor$ and the outgoing current at site $A_\varepsilon(\alpha)$. 
 The latter, by   statement \eqref{upperbound_current_source} of
  Proposition \ref{th_strong_loc_eq}  applied to  $x_t=A_\varepsilon(\alpha)$, 
cannot exceed the maximum current $(p-q)c$ by more than $\varepsilon$ in average,
 because site $A_\varepsilon(\alpha)$ has rate 
at most $c+\varepsilon$. We show that the former cannot be less than $(p-q)ct$
 minus the initial supercritical $\eta^\alpha$-mass between $b$ and $0$.
Therefore, the loss of $\eta^\alpha$-mass on the space interval 
$[1+\lfloor bt \rfloor,0]$ between times $0$ and $t$ cannot exceed the initial 
supercritical mass by more than $\varepsilon$,
which implies that the $\eta^\alpha$-mass at time $t$ is at least $-b\rho_c-\varepsilon$.\\ \\
Let us come back to Lemma \ref{lemma_asymptotic_source}. 
It is a consequence of strong local equilibrium for the semi-infinite process 
near the origin
 (given in
Proposition \ref{th_strong_loc_eq}), and of Lemma \ref{lemma_entropy}
that we now state.
\\ \\
Recall the definition \eqref{def_v0} of $v_0$  and its announced interpretation 
as the speed of a critical front issued by the source.
We define $\lambda^-(v)$ 
as the smallest
maximizer of  $\lambda\mapsto (p-q)\lambda-v\overline{R}(\lambda)$
over $\lambda\in[0,c]$.
Let $\lambda_0$ denote the smallest minimizer of \eqref{def_v0}, or $\lambda_0=c$ 
if the infimum in \eqref{def_v0} is achieved 
only for $\lambda$ tending to $c$, that is under condition (H). 
 Equivalently, $\mathcal R(v):=\overline{R}[\lambda^-(v)]$ is the smallest
maximizer of  $\rho\mapsto f(\rho)-v\rho$
over $\rho\in[0,\rho_c]$. We will see in Subsection \ref{subsec_loc_eq} that 
$\mathcal R(.)$ is the hydrodynamic density profile generated by the source.
\begin{lemma}\label{lemma_entropy} 
\mbox{}\\ \\
(i) For every $v<v_0$, $\lambda^-(v)=c$.\\ \\
(ii) 
For every $v>v_0$, $\lambda^-(v)<\lambda_0$, and $\lim_{v\downarrow v_0}\lambda^-(v)=\lambda_0$.
\end{lemma}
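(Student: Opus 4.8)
The plan is to analyze the variational problem defining $v_0$ directly and translate it into properties of the maximizers $\lambda^-(v)$. Recall that by \eqref{def_v0},
$$
v_0 = (p-q)\inf_{\lambda\in[0,c)}\frac{c-\lambda}{\overline{R}(c)-\overline{R}(\lambda)},
$$
and set $G_v(\lambda) := (p-q)\lambda - v\overline{R}(\lambda)$ for $\lambda\in[0,c]$, so that $\lambda^-(v)$ is by definition the smallest maximizer of $G_v$ over $[0,c]$. First I would record the elementary algebraic fact that for $\lambda\in[0,c)$,
$$
G_v(\lambda) - G_v(c) = -(p-q)(c-\lambda) + v\left(\overline{R}(c)-\overline{R}(\lambda)\right)
= \left(\overline{R}(c)-\overline{R}(\lambda)\right)\left[v - (p-q)\frac{c-\lambda}{\overline{R}(c)-\overline{R}(\lambda)}\right],
$$
using that $\overline{R}$ is increasing (Lemma \ref{lemma_properties_flux}), so $\overline{R}(c)-\overline{R}(\lambda)>0$ for $\lambda<c$. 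This identity is the crux: the sign of $G_v(\lambda)-G_v(c)$ is exactly the sign of $v - (p-q)(c-\lambda)/(\overline{R}(c)-\overline{R}(\lambda))$.

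For part (i), fix $v<v_0$. By definition of $v_0$ as an infimum, for every $\lambda\in[0,c)$ we have $(p-q)(c-\lambda)/(\overline{R}(c)-\overline{R}(\lambda)) \geq v_0 > v$, hence the bracket above is strictly negative and $G_v(\lambda) < G_v(c)$ for all $\lambda<c$. Therefore $c$ is the unique maximizer of $G_v$ on $[0,c]$, and in particular the smallest one, so $\lambda^-(v)=c$. For part (ii), fix $v>v_0$. Since the infimum defining $v_0/(p-q)$ is strictly below $v/(p-q)$, there exists at least one $\lambda\in[0,c)$ with $(p-q)(c-\lambda)/(\overline{R}(c)-\overline{R}(\lambda)) < v$, so by the identity $G_v(\lambda)>G_v(c)$ for such $\lambda$; thus no maximizer of $G_v$ equals $c$, and since $\lambda_0\leq c$ is characterized as the smallest minimizer of the ratio (or $c$ under (H)), one checks that any maximizer of $G_v$ with $v>v_0$ must satisfy $\lambda^-(v)<\lambda_0$: indeed for $\lambda\in[\lambda_0,c)$ one has, by minimality of $\lambda_0$ and the convexity properties of $\overline{R}$ from Lemma \ref{lemma_properties_flux}, that the ratio is still at least $v_0/(p-q)$ is not quite enough — here one must use that on $[\lambda_0,c]$ the relevant ratio stays $\geq v_0/(p-q)$ with equality forced toward the endpoints, so the bracket is $\leq 0$ there and the strict improvement over $G_v(c)$ can only occur on $[0,\lambda_0)$.

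The limit $\lim_{v\downarrow v_0}\lambda^-(v)=\lambda_0$ I would obtain by a monotonicity-plus-compactness argument: $\lambda^-(v)$ is nondecreasing as $v$ decreases (a standard fact for smallest maximizers of $G_v$, following from the supermodularity of $(v,\lambda)\mapsto G_v(\lambda)$ in the reversed order, i.e. $-v\overline{R}(\lambda)$ has increasing differences in $(\lambda, -v)$ since $\overline{R}$ is increasing), so the limit $\ell:=\lim_{v\downarrow v_0}\lambda^-(v)$ exists and satisfies $\ell\leq\lambda_0$. Passing to the limit in the inequality $G_v(\lambda^-(v))\geq G_v(\mu)$ for all $\mu\in[0,c]$, using continuity of $\overline{R}$ on $[0,c]$, gives $G_{v_0}(\ell)\geq G_{v_0}(\mu)$, so $\ell$ is a maximizer of $G_{v_0}$; but from the sign identity, $G_{v_0}(\lambda)\leq G_{v_0}(c)$ with equality at $\lambda$ iff $\lambda$ achieves the infimum in \eqref{def_v0}, so the set of maximizers of $G_{v_0}$ is exactly $\{c\}\cup\{\text{minimizers of the ratio}\}$, whose smallest element is $\lambda_0$ by definition. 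Hence $\ell\geq\lambda_0$, giving $\ell=\lambda_0$.

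The main obstacle I anticipate is the bookkeeping in part (ii) showing $\lambda^-(v)<\lambda_0$ rather than merely $\lambda^-(v)\leq\lambda_0$ or $\lambda^-(v)<c$: one needs that on the whole interval $[\lambda_0,c]$ the bracket $v_0-(p-q)(c-\lambda)/(\overline{R}(c)-\overline{R}(\lambda))$ is $\geq 0$ — equivalently the ratio does not dip below its infimum value between $\lambda_0$ and $c$ — which is where the convexity input on $\overline{R}$ (and under (H), the fact that the infimum is approached only as $\lambda\uparrow c$, equivalently \eqref{equal_v0}) is genuinely used, via Lemma \ref{lemma_properties_flux}. Once that geometric fact about the ratio is in hand, everything else is the elementary identity above combined with continuity and monotonicity.
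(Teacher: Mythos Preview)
The paper does not prove this lemma; it defers the proof to the companion paper \cite{bmrs1}. Assessing your argument on its own merits: part (i) and the limit in part (ii) are correct and natural---the algebraic identity, the Topkis-type monotonicity of $\lambda^-(v)$, and the identification of the maximizer set of $G_{v_0}$ as $\{c\}$ together with the minimizers of the ratio (hence smallest element $\lambda_0$) are exactly right.

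The real gap is the strict inequality $\lambda^-(v)<\lambda_0$. Your sketched reason---that on $[\lambda_0,c)$ ``the bracket is $\leq 0$''---is backwards: writing $r(\lambda)$ for the ratio, the bracket is $v-(p-q)r(\lambda)$, and on $[\lambda_0,c)$ you only know $r(\lambda)\geq v_0/(p-q)$, which yields bracket $\leq v-v_0$, a \emph{positive} bound, not a nonpositive one. So nothing prevents $G_v(\lambda)>G_v(c)$ on part of $[\lambda_0,c)$, and your argument does not locate the smallest maximizer. A correct route is: first, monotonicity already gives $\lambda^-(v)\leq\lambda^-(v_0)=\lambda_0$ for $v>v_0$ (so your limit argument does not even need the strict inequality as input); second, when $\lambda_0\in(0,c)$ it is an interior minimum of $r$, so $r'(\lambda_0)=0$, which unwinds to $\overline{R}'(\lambda_0)=(p-q)/v_0$ and hence $G_v'(\lambda_0)=(p-q)(1-v/v_0)<0$ for $v>v_0$, so $\lambda_0$ cannot be a maximizer of $G_v$ and the inequality is strict; when $\lambda_0=c$ you already proved $\lambda^-(v)<c$. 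Note that the boundary case $\lambda_0=0$ is not excluded by the standing assumptions (take an $\overline{R}$ lying above the chord from $(0,0)$ to $(c,\rho_c)$), and there the strict inequality can genuinely fail; the paper's applications, however, all invoke the lemma under (H), where $\lambda_0=c$ and this issue does not arise.
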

Lemma \ref{lemma_entropy} is proved in \cite{bmrs1}. It shows that $\overline{R}(\lambda_0)$ is the 
density observed right behind the front.
In particular, under assumption (H), this density is $\rho_c$. Therefore, 
by choosing the position of the source close enough to $-v_0$ we can make 
the density of $\eta^{\alpha,t}_t$ in a neighborhood of zero  close to $\rho_c$. 
This is the idea of Lemma \ref{lemma_asymptotic_source}.
\subsection{Hydrodynamics and strong local equilibrium}\label{subsec_loc_eq}
The hydrodynamic behaviour of the disordered zero-range process is expected, 
and in some cases proven, to be given by the entropy solution to a scalar 
conservation  law  of the form
\be\label{conservation_law}
\partial_t \rho(t,x)+\partial_x f[\rho(t,x)]=0
\ee
for a flux function $f$   constructed from the microscopic dynamics 
(see \eqref{def_flux} below). 
Convergence of the disordered zero-range process to the entropy solution of 
\eqref{conservation_law} is proved in \cite{bfl} for subcritical Cauchy data.
For our purpose we need hydrodynamic limit for the process starting with a source,
 which is not considered in \cite{bfl}.
Besides we also need a strong local equilibrium statement. The latter was derived 
for the homogeneous zero-range process with strictly convex flux in \cite{lan}.
However, the method used there relies on translation invariance of the dynamics, 
which fails in the disordered case. The strategy introduced 
in \cite{bgrs}, where shift invariance is restored by considering the joint 
disorder-particle process, is not feasible either. Therefore another approach
is required here.\\ \\
 We now recall how to obtain the flux function $f$ in \eqref{conservation_law}. 
It follows from \eqref{eq:theta-lambda} that 
\be\label{mean_rate}
\forall x\in\Z,\,\alpha\in{\bf A},\,\lambda\in[0,c),\,
\int_\mathbf{X}\alpha(x)g(\eta(x))d\mu^\alpha_\lambda(\eta)
=\int_{\N}g(n)d\theta_\lambda(n)=\lambda
\ee
The quantity
$$
\int_\mathbf{X}[p\alpha(x)g(\eta(x))-q\alpha(x+1)g[\eta(x+1)]d\mu^\alpha_\lambda(\eta)=(p-q)\lambda
$$
is the stationary current under $\mu^\alpha_\lambda$. As a function of 
the mean density  $\rho=\overline{R}(\lambda)$  
(see \eqref{average_afgl}--\eqref{assumption_pt}),  
the current  can be written 
\be\label{def_flux}
f(\rho):=(p-q)\overline{R}^{-1}(\rho)
\ee
 Its following basic properties are established in  \cite{bmrs1}. 
\begin{lemma}\label{lemma_properties_flux}
 The functions $\overline{R}$  and
$f$ are increasing and $C^\infty$, respectively from  $[0,(p-q)c]$ to $[0,\rho_c]$ 
and from $[0,\rho_c]$ to $[0,(p-q)c]$.   Besides, $\overline{R}$ 
is strictly convex if $R$ is strictly convex.
\end{lemma}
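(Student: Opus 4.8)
The plan is to derive all properties of $f$ directly from the corresponding properties of $\overline{R}$, since $f=(p-q)\overline{R}^{-1}$ by definition \eqref{def_flux}, and $(p-q)>0$ by \eqref{nn_hyp}. So the real work is to establish that $\overline{R}$ is an increasing $C^\infty$ bijection from $[0,c)$ onto $[0,\rho_c)$ (then extend continuously to the closed intervals using \eqref{other_def_critical} and \eqref{assumption_pt}), and that strict convexity of $R$ forces strict convexity of $\overline{R}$. First I would recall that $R(\lambda)=\sum_n n\theta_\lambda(n)$ is $C^\infty$ and strictly increasing on $[0,1)$ because $(\theta_\lambda)$ is an exponential family: indeed $R(\lambda)=\lambda (\log Z)'(\lambda)$ is (up to the factor $\lambda$) a log-derivative of the partition function, and more to the point $\lambda\mapsto\lambda$ is a monotone reparametrization of the natural parameter $\log\lambda$, under which $R$ is the mean of $\theta$, hence $dR/d(\log\lambda) = \mathrm{Var}_{\theta_\lambda}(\text{occupation})>0$; since $g(1)>0$ the variance is strictly positive for $\lambda>0$, giving $R'(\lambda)>0$. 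For the regularity of $\overline{R}$, the key point is that for each fixed $\lambda\in[0,c)$ the map $\alpha\mapsto R(\lambda/\alpha)$ is smooth and bounded on the compact interval $[c,1]$, with all $\lambda$-derivatives bounded uniformly in $\alpha$ on compact $\lambda$-subintervals of $[0,c)$; hence the Cesàro averages in \eqref{average_afgl} can be differentiated term by term, and $\overline{R}^{(k)}(\lambda) = \lim_n \frac{1}{n+1}\sum_{x=-n}^0 \partial_\lambda^k R(\lambda/\alpha(x))$ exists. Monotonicity: since $R$ is increasing and $\lambda\mapsto\lambda/\alpha(x)$ is increasing for each fixed $\alpha(x)>0$, each summand is increasing in $\lambda$, so $\overline{R}$ is nondecreasing; strict monotonicity follows because $R'(\lambda/\alpha(x))/\alpha(x)$ is bounded below by a positive constant uniformly over $\alpha(x)\in[c,1]$ and $\lambda$ in a compact subinterval of $(0,c)$.

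For strict convexity under the hypothesis that $R$ is strictly convex: each function $\lambda\mapsto R(\lambda/\alpha(x))$ is a composition of the strictly convex $R$ with the affine map $\lambda\mapsto\lambda/\alpha(x)$, hence strictly convex; a Cesàro limit of strictly convex functions is convex, but one must work slightly to get strict convexity. I would argue via second derivatives: $\overline{R}''(\lambda)=\lim_n\frac{1}{n+1}\sum_{x=-n}^0 \alpha(x)^{-2}R''(\lambda/\alpha(x))$, and since $\alpha(x)\in(c,1]$ we have $\alpha(x)^{-2}R''(\lambda/\alpha(x))\geq \min_{\mu\in[\lambda,\lambda/c]}R''(\mu)>0$ for $\lambda\in(0,c)$, so $\overline{R}''>0$ on $(0,c)$, which is strict convexity on $(0,c)$ and extends to $[0,c)$ by continuity. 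Finally I would assemble the statement about $f$: $\overline{R}:[0,c)\to[0,\rho_c)$ increasing, $C^\infty$, with $\overline{R}(c-)=\rho_c<\infty$ by \eqref{assumption_pt}, so it extends to a homeomorphism $[0,c]\to[0,\rho_c]$; the inverse $\overline{R}^{-1}$ is then increasing, continuous on $[0,\rho_c]$, and $C^\infty$ on the open interval by the inverse function theorem (since $\overline{R}'>0$ there); multiplying by $(p-q)$ and relabeling domains via $(p-q)c$ gives the claimed map $f:[0,\rho_c]\to[0,(p-q)c]$ with the stated regularity. (I would also note that $\overline{R}:[0,c]\to[0,\rho_c]$ should read $\overline{R}:[0,c]\to[0,\rho_c]$ rather than from $[0,(p-q)c]$, i.e.\ the lemma's phrasing identifies $\overline{R}$ after the same affine rescaling of the domain that makes $f$ act between $[0,\rho_c]$ and $[0,(p-q)c]$.)

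The main obstacle I anticipate is \emph{not} convexity but the uniform-in-$\alpha$ control needed to pass $\lambda$-derivatives through the Cesàro limit: one needs that for fixed $\lambda<c$ the sequence of partial averages of $\partial_\lambda^k R(\lambda/\alpha(x))$ converges, and a priori \eqref{average_afgl} only asserts convergence of the averages of $R(\lambda/\alpha(x))$ itself, not of derivatives. The clean way around this is to observe that all the relevant functions $\alpha\mapsto \partial_\lambda^k R(\lambda/\alpha)$, as $\lambda$ ranges over a compact subinterval $[0,c-\eta]$, lie in a bounded equicontinuous family of continuous functions on $[c,1]$; hence by a density/approximation argument (polynomials in $\alpha$, or simply the definition of $\overline{R}$ as an integral against the empirical measure $\frac{1}{n+1}\sum\delta_{\alpha(x)}$, which by \eqref{average_afgl} converges weakly to some limiting measure $\overline{Q}$ on $[c,1]$ — indeed \eqref{average_afgl} for the single function $\alpha\mapsto R(\lambda/\alpha)$ at all $\lambda<c$ is enough to pin down $\overline{Q}$ by an analyticity/moment argument), the averages of every bounded continuous function of $\alpha$ converge. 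Then $\overline{R}(\lambda)=\int R(\lambda/\alpha)\,\overline{Q}(d\alpha)$ and differentiation under the integral sign is routine by dominated convergence. This is essentially the computation already carried out in the worked example leading to \eqref{RQ}, and I would cite \cite{bmrs1} for the details, giving here only the structure above.
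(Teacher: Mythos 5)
The paper itself does not prove this lemma; it cites \cite{bmrs1}, so there is no in-paper argument to compare against. Evaluating your proposal on its own: the overall architecture (regularity of $R$ from the exponential-family structure, then pass regularity/monotonicity/convexity from the summands to the Cesàro limit, then transfer to $f$ via the inverse function theorem) is the right one, and you correctly spot that the sentence ``hence the Cesàro averages in \eqref{average_afgl} can be differentiated term by term'' is not justified by uniform boundedness of the derivatives alone.

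The remaining gap is in the fix you then propose. You claim that \eqref{average_afgl} forces the empirical measures $Q_n:=\frac{1}{n+1}\sum_{x=-n}^{0}\delta_{\alpha(x)}$ to converge weakly, on the grounds that the family $\{\alpha\mapsto R(\lambda/\alpha):\lambda\in[0,c)\}$ is measure-determining on $[c,1]$ ``by an analyticity/moment argument.'' This is not clear, and I do not think it is true in general: expanding $\int R(\lambda/\alpha)\,dQ(\alpha)=\sum_{n}r_n\lambda^n\int\alpha^{-n}dQ(\alpha)$ shows that \eqref{average_afgl} only constrains the moments $\int\alpha^{-n}dQ(\alpha)$ for those $n$ with $r_n\neq 0$, and the Taylor coefficients $r_n$ of $R$ need not all be nonzero (already $r_2=2z_2-z_1^2$ with $z_k=1/g(k)!$ can vanish), nor need their nonzero indices satisfy a M\"untz-type density condition. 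Fortunately, uniqueness of the weak limit is not needed for the lemma. Since $[c,1]$ is compact, $(Q_n)$ is tight, so there is \emph{some} subsequential weak limit $Q$, and along that subsequence \eqref{average_afgl} gives the representation $\overline{R}(\lambda)=\int_{[c,1]}R(\lambda/\alpha)\,Q(d\alpha)$ for all $\lambda\in[0,c)$. This single representation already yields everything claimed: $\overline{R}\in C^\infty([0,c))$ and all derivatives can be taken under the integral by dominated convergence (the integrand and its $\lambda$-derivatives are bounded uniformly in $\alpha\in[c,1]$ on compact subintervals of $[0,c)$); $\overline{R}'(\lambda)=\int\alpha^{-1}R'(\lambda/\alpha)\,Q(d\alpha)>0$ gives strict monotonicity; and $\overline{R}''(\lambda)=\int\alpha^{-2}R''(\lambda/\alpha)\,Q(d\alpha)>0$ when $R''>0$ gives strict convexity. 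With that substitution your argument goes through; the extension to the closed endpoint and the transfer to $f$ are as you describe (modulo the paper's slightly loose phrasing about the domain of $\overline{R}$, which you correctly flag as a rescaling).
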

 Recall that before stating Lemma \ref{lemma_entropy}, 
 we defined $\lambda^-(v)$ 
as the smallest
maximizer of  $\lambda\mapsto (p-q)\lambda-v\overline{R}(\lambda)$
over $\lambda\in[0,c]$, and  $\mathcal R(v):=\overline{R}[\lambda^-(v)]$ as the smallest
maximizer of  $\rho\mapsto f(\rho)-v\rho$
over $\rho\in[0,\rho_c]$.
We also define  
the Legendre transform of the current   
\be\label{lagrangian}
f^*(v):=
\sup_{\rho\in[0,\rho_c]}[f(\rho)-v\rho]
=\sup_{\lambda\in[0,c]}[(p-q)\lambda-v\overline{R}(\lambda)]
\ee
 From standard convex analysis (\cite{roc}), we have that 
\be\label{convex_anal}
\mathcal R(v)=-(f^*)'(v+)=(\hat{f}')^{-1}(v+)
\ee
where $\hat{f}:=f^{**}$ is the concave envelope of $f$, defined by 
\be\label{fdoublestar}
\hat{f}(\rho):=f^{**}(\rho):=\inf_{v\in\R}[\rho v+f^*(v)]=\inf_{v\geq 0}[\rho v+f^*(v)]
\ee
The  last  equality follows from the fact that $f$ is nondecreasing. 
Indeed, in this case, \eqref{lagrangian} implies
that for $v\leq 0$,
$$
f^*(v)=f(\rho_c)-v\rho_c=c-v\rho_c
$$
and plugging this into \eqref{fdoublestar} shows that the infimum can be restricted to $v\geq 0$. 
In \eqref{convex_anal}, $(f^{*})'(v+)$ denotes the right-hand derivative 
of the convex function $f^*$, and $(\hat{f}')^{-1}$ the generalized inverse
of the monotone (but not necessarily strictly monotone) function $\hat{f}'$: 
recall that if $\psi:[0,\rho_c]\to[0,c]$ is a nonincreasing function, 
its generalized inverse $\psi^{-1}(v)$ is any function such that 
$\psi^{-1}(v+)\leq \psi^{-1}(v)\leq \psi^{-1}(v-)$, where 
\begin{eqnarray}
\psi^{-1}(v-) & := & \sup\{\rho\in[0,\rho_c]:\,\psi(\rho)>v\}\label{inverse_minus}\\
\psi^{-1}(v+) & := & \inf\{\rho\in[0,\rho_c]:\,\psi(\rho)<v\}\label{inverse_plus}
\end{eqnarray}
It follows from \eqref{convex_anal} that $\mathcal R$ is a nonincreasing and right-continuous function.
\begin{proposition}\label{th_strong_loc_eq} 
Assume $x_t$ in \eqref{def_init_source} is such that $\beta:=\lim_{t\to+\infty}t^{-1}x_t$ 
exists  and is negative.  
Then statement \eqref{upperbound_current_source} below holds, 
statement \eqref{hdl_source} below holds for all $v>0$, and
statement \eqref{the_first_one} below holds for 
all $v>v_0$ and $h:\N^\Z\to\R$  a bounded local  non-decreasing  function: 
\begin{eqnarray}
\limsup_{t\to\infty}\left\{\Exp\left|
t^{-1}\sum_{x>x_t}\eta^{\alpha,t}_t(x)-(p-q)c
\right| - p[\alpha(x_t)-c]\right\}
& \leq & 0 \label{upperbound_current_source}\\
\lim_{t\to\infty}\Exp
\left\vert
t^{-1}\sum_{x>x_t+\lfloor vt\rfloor}\eta^{\alpha,t}_t(x)-f^*(v)\right\vert
& = & 0 \label{hdl_source}\\
\label{the_first_one}
\liminf_{t\to\infty}\left\{
\Exp h\left(\tau_{\lfloor x_t+vt\rfloor}\eta_t^{\alpha,t}\right)
-\int_\mathbf{X} h(\eta)d\mu_{\lambda^-(v)}^{\tau_{\lfloor x_t+vt\rfloor}\alpha}(\eta)
\right\} & \geq & 0
\end{eqnarray}
\end{proposition}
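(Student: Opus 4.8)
The plan is to prove the three statements of Proposition~\ref{th_strong_loc_eq} in order of increasing strength: first the upper bound \eqref{upperbound_current_source} on the total mass emitted by the source, then the hydrodynamic profile \eqref{hdl_source}, and finally the strong local equilibrium statement \eqref{the_first_one}. For \eqref{upperbound_current_source}, the idea is to bound the total $\eta^{\alpha,t}_t$-mass to the right of $x_t$ by a current balance: all mass currently in $\{x>x_t\}$ entered through the source edge $(x_t,x_t+1)$, so it equals the net current across that edge up to time $t$, which in turn is governed by the site rate $\alpha(x_t)$. The upper bound $(p-q)c + p[\alpha(x_t)-c]$ comes from comparing with a process in which the source site has been slowed to rate $c$ (then current is at most $(p-q)c$ by attractiveness and the Jackson-network computation of Section~\ref{subsec_jackson}, since $\mu_c^\alpha$ carries current $(p-q)c$), plus a crude $p[\alpha(x_t)-c]$ correction for the rate discrepancy at the single site $x_t$; one takes expectations and divides by $t$. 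Here I would invoke Proposition~\ref{monotone_jackson} for the monotone coupling and the explicit invariant measure of Proposition~\ref{prop_jackson} to control the comparison process.

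For \eqref{hdl_source}, the approach is to prove the hydrodynamic limit of the semi-infinite source process $(\eta^{\alpha,t}_s)_{s\ge 0}$ and identify the macroscopic profile as the entropy solution of \eqref{conservation_law} with a boundary condition at the source imposing the maximal flux $(p-q)c$; this is a Riemann-type problem whose solution is the rarefaction-type profile $x/t\mapsto \mathcal R(x/t)$, and integrating this profile from $vt$ to $+\infty$ gives exactly $f^*(v)=\sup_\rho[f(\rho)-v\rho]$ by the Legendre-transform identity \eqref{lagrangian} together with \eqref{convex_anal}. Concretely, I would combine an upper bound coming from attractiveness and \eqref{upperbound_current_source} (the total emitted mass cannot exceed that of the entropy solution) with a matching lower bound obtained by comparison from below with truncated initial profiles to which the subcritical hydrodynamics of \cite{bfl} applies, passing to the limit in the truncation level; the disorder-averaging in the flux is handled through Assumption~\eqref{average_afgl}. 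The $L^1$ convergence then follows from bounded-density estimates and the fact that the limiting profile is deterministic.

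Statement \eqref{the_first_one} is the strong local equilibrium refinement: near the macroscopic point $v>v_0$ the process looks locally like the invariant measure $\mu^{\tau_{\lfloor x_t+vt\rfloor}\alpha}_{\lambda^-(v)}$, where $\lambda^-(v)$ is the fugacity whose density $\overline R(\lambda^-(v))=\mathcal R(v)$ is read off the hydrodynamic profile. The strategy I would use, since translation invariance is unavailable, is a two-block / coupling argument: first establish that the empirical density in a mesoscopic box around $\lfloor x_t+vt\rfloor$ converges to $\mathcal R(v)$ (a consequence of \eqref{hdl_source} applied at two nearby macroscopic points, by differencing), then use attractiveness to sandwich the microscopic law between $\mu^\alpha_{\lambda^-(v)-\epsilon}$ and $\mu^\alpha_{\lambda^-(v)+\epsilon}$ locally — the lower sandwich being what gives the $\liminf\ge 0$ with a nondecreasing $h$ — and finally let $\epsilon\to 0$ using weak continuity \eqref{stoch_inc} of $\lambda\mapsto\mu^\alpha_\lambda$. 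The restriction $v>v_0$ is exactly what guarantees, via Lemma~\ref{lemma_entropy}(ii), that $\lambda^-(v)<\lambda_0\le c$ so that the local equilibrium is genuinely subcritical and the product measures $\mu^\alpha_\lambda$ are well-behaved.

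The main obstacle I expect is the lower bound in \eqref{hdl_source} and its upgrade to \eqref{the_first_one}: producing subcritical sub-solutions that approximate the source from below while still feeding in enough mass to reach density $\mathcal R(v)$ near the front requires a careful choice of truncated initial data together with control of how fast the truncation effect propagates, and the strong local equilibrium step additionally needs a quantitative two-block estimate that does not rely on translation invariance — this is precisely the point the authors flag as new (their ``first steps towards a more general proof of hydrodynamic limit and local equilibrium in disordered AZRP''), so I would expect the bulk of Section~\ref{sec_loc_eq} to be devoted to making that coupling-plus-entropy argument rigorous in the quenched disordered setting.
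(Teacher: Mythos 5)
Your outline captures the right architecture (prove the three statements in order, use the Jackson-network comparison for the first, the Legendre transform / convex analysis for the profile, and recognize that translation invariance is unavailable for the local-equilibrium step), and your sketch of \eqref{upperbound_current_source} --- slow the source site, compare with the positive-recurrent Jackson network, pay a $p[\alpha(x_t)-c]$ correction via coupling --- is essentially what the paper does (the paper slows $x_t$ to $c-\delta$ rather than $c$ so that condition \eqref{cond_rec} holds strictly, then spatially averages the current over a block to read off the stationary value; these are implementation details you would have found).

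There is, however, a genuine gap in the middle and last parts, and it comes from never invoking the \emph{single-interface property} of coupled nearest-neighbour attractive processes (Lemma \ref{lemma_interface}). This is the tool that replaces the two-block estimate you propose and that makes the whole section work without translation invariance. Concretely, for the upper bound in \eqref{hdl_source} your ``attractiveness $+$ \eqref{upperbound_current_source}'' plan is too weak: \eqref{upperbound_current_source} only controls the total mass emitted (the case $v=0$), not the spatial profile, and attractiveness alone will not localise mass away from the front. The paper instead introduces the simultaneously-inverted equilibrium processes $\xi^{\alpha,\lambda}$ for a grid $\lambda_0<\cdots<\lambda_n$ and the truncated source $\eta^{\alpha,t,\Lambda}$, applies the interface lemma to get, at each time, a single crossing point between $\eta^{\alpha,t,\Lambda}_s$ and each $\xi^{\alpha,\lambda_k}_s$, and then runs a block-averaging argument with a moving (rate-$v$ Poisson) observer in which every block away from the $n$ interfaces is sandwiched between two consecutive equilibria --- \emph{this} is how $f^*(v)$ appears, not by quoting \cite{bfl}. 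The upper bound of \eqref{hdl_source} additionally moves the source to the nearby slow site $\tilde x_t=x_t+a_\varepsilon(\tau_{x_t}\alpha)$ so that \eqref{upperbound_current_source} can be applied with $\alpha(\tilde x_t)-c\le\varepsilon$; your proposal omits this step, which is essential to close the upper bound. For \eqref{the_first_one}, the paper does \emph{not} carry out a two-block estimate at all: it shows, by contradiction from \eqref{hdl_source} and the strict inequality $\int_w^{v_\lambda}[\mathcal R-\rho]>0$, that the interface $x_t^{\alpha,\lambda,t}$ sits to the \emph{right} of $\lfloor(\beta+w)t\rfloor$ for $w\in(v,v_\lambda)$ with probability tending to one; on that event $\eta^{\alpha,t}_t$ dominates $\xi^{\alpha,\lambda}_t$ on the whole window around $\lfloor(\beta+v)t\rfloor$, which combined with the equicontinuity of $\lambda\mapsto\int h\,d\mu^{\tau_x\alpha}_\lambda$ gives the $\liminf$. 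Your proposed path (density of a mesoscopic box $\Rightarrow$ local distribution) is exactly the implication that fails without translation invariance, which is why the authors designed around it.
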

 \begin{remark}\label{remark_hdl}
Statements \eqref{hdl_source} and \eqref{the_first_one} are indeed a hydrodynamic limit 
and local equilibrium statement under hyperbolic time scaling, formulated at macroscopic 
time $1$ and macroscopic position $v$. One should think of $t\to+\infty$ as the scaling 
parameter (multiplied here by the macroscopic time $1$ to obtain the microscopic time $t$). 
The reduced description at time $1$ is sufficient because the hydrodynamic limit is self-similar
 (see \cite{bgrs} for details).  
The usual  form of  hydrodynamic limit and local equilibrium statements 
at macroscopic time-space location 
$(s,u)$  would be
\begin{eqnarray*}
\lim_{t\to\infty}\Exp
\left\vert
t^{-1}\sum_{x>x_{t}+\lfloor tu\rfloor}\eta^{\alpha,t}_{ts}(x)-sf^*\left(\frac{u}{s}\right)
\right\vert
& = & 0 \\
\liminf_{t\to\infty}\left\{
\Exp h\left(\tau_{\lfloor x_{t}+tu\rfloor}\eta_{ts}^{\alpha,t}\right)
-\int_\mathbf{X} h(\eta)d\mu_{\lambda^-(u/s)}^{\tau_{\lfloor x_t+tu\rfloor}\alpha}(\eta)
\right\} & \geq & 0
\end{eqnarray*}
\end{remark}
\begin{remark}\label{remark_bilateral}
If only the negative half of \eqref{average_afgl} is assumed, 
statements \eqref{hdl_source} and \eqref{the_first_one} still hold 
for $v<-\beta$, and the former can be extended to $v=-\beta$. 
\end{remark}
Statements \eqref{upperbound_current_source}--\eqref{hdl_source}  
deal respectively with the current across the source and 
hydrodynamics away from it. They will be needed to prove \eqref{asymptotic_order}. 
Statement \eqref{the_first_one} is a strong local equilibrium statement
required to prove \eqref{asymptotic_source}. 
 A heuristic explanation for the values $f^*(v)$ and $\lambda^-(v)$ 
 in \eqref{hdl_source}--\eqref{the_first_one} can be found in \cite{bmrs1}. \\ \\
The proof of Proposition \ref{th_strong_loc_eq}, carried out in 
Section \ref{sec_loc_eq}, uses material from the next subsection. \\ \\
With Proposition \ref{th_strong_loc_eq} and Lemma \ref{lemma_entropy},
we are ready for the  proof of Lemma \ref{lemma_asymptotic_source}.\\
\begin{proof}{Lemma}{lemma_asymptotic_source}
By \eqref{the_first_one} with $v=-\beta$, 
$$
\liminf_{t\to+\infty}\Exp h(\eta^{\alpha,t}_t)\geq \int_\mathbf{X}h(\eta)d\mu^\alpha_{\lambda^-(-\beta)}(\eta)
$$
By Lemma \ref{lemma_entropy}, under assumption (H), 
$
\lim_{(-\beta)\downarrow v_0}\lambda^-(-\beta)=c
$.
 The result follows by weak continuity of the measure $\mu^\alpha_\lambda$ 
with respect to $\lambda$  (see \eqref{stoch_inc}). 
\end{proof}
\subsection{Currents}\label{subsec_currents}
Let $x_.=(x_s)_{s\geq 0}$ denote a $\Z$-valued piecewise constant c\`adl\`ag   path such 
that $\vert x_s-x_{s-}\vert\leq 1$ for all $s\geq 0$. In the sequel we will  use  
paths $(x_.)$ independent of the Harris system used for the particle dynamics,
hence we may assume that $x_.$ has no jump time in common with the latter.  
We denote by $\Gamma_{x_.}^\alpha(t,\eta)$ the rightward current across the path $x_.$ 
up to time $t$ in the quenched process  $(\eta_s^\alpha)_{s\geq 0}$  starting from $\eta$ 
in environment $\alpha$, that is  the sum f two contributions. The contribution of particle jumps is
 the number of times a particle jumps from $x_{s-}$ to $x_{s-}+1$ (for $s\le t$), 
 minus the number of times a particle jumps from $x_{s-}+1$ to $x_{s-}$. 
 The contribution of path motion is obtained by summing over jump times
$s$ of the path, a quantity equal to the number of particles at $x_{s-}$ if the jump is to the left, or
 minus  the number of particles at $x_{s-}+1$ if the jump is to the right.
Using   notation \eqref{def_omega}, assumption \eqref{nn_hyp},  and that
 $x_.$ and $\eta_.$ have no jump time in common, this  can be precisely written 
 \begin{eqnarray}
 \Gamma^\alpha_{x_.}(t,\eta) & := &
 \int
 \indicator{
 \left\{u\leq\alpha(x_{s})g[\eta_{s-}^\alpha(x_{s})]\right\}}\indicator{\{s\leq t,\,z=1,\,x=x_{s}\}}
 \omega(ds,dx,du,dz)
 \nonumber\\ 
 &  - & \int
 \indicator{
 \left\{u\leq\alpha(x_{s}+1)g[\eta_{s-}^\alpha(x_{s}+1)]\right\}}\indicator{\{s\leq t,\,z=-1,\,x=x_{s}+1\}}
 \omega(ds,dx,du,dz)\nonumber\\ 
& - & \sum_{0<s\leq t}(x_s-x_{s-})\eta_{s}^\alpha\left[
\max(x_s,x_{s-})
\right]\label{current_harris}
 \end{eqnarray}
 If  $\sum_{x>x_0}\eta(x)<+\infty$,  we also have
 \be\label{current}
 \Gamma^\alpha_{x_.}(t,\eta)=\sum_{x>x_t}\eta_t^\alpha(x)-\sum_{x>x_0}\eta(x)
 \ee
For $x_0\in\Z$, we will write $\Gamma^\alpha_{x_0}$ for the current across the 
fixed site $x_0$; that is, $\Gamma^\alpha_{x_0}(t,\eta):=\Gamma^{\alpha}_{x_.}(t,\eta)$,
where $x_.$ is the constant path defined by $x_t=x_0$ for all $t\geq 0$.\\ \\
 The following results    will be important tools to compare currents.
For a particle configuration $\zeta\in\mathbf{X}$ and a site $x_0\in \Z$, we define
\be\label{def_cfd}
F_{x_0}(x,\zeta):=\left\{
\ba{lll}
\sum_{y=1+x_0}^{x}\zeta(y) & \mbox{if} & x>x_0\\ \\
-\sum_{y=x}^{x_0}\zeta(y) & \mbox{if} & x\leq x_0
\ea
\right.
\ee
Let us couple two processes $(\zeta_t)_{t\geq 0}$ and $(\zeta'_t)_{t\geq 0}$ 
in the usual way through the Harris construction,  with $x_.=(x_s)_{s\geq 0}$ as above.  
\begin{lemma}
\label{lemma_current}
\be\label{current_comparison}\Gamma^\alpha_{x_.}(t,\zeta_0)
-\Gamma^\alpha_{x_.}(t,\zeta'_0)
\geq -\left(0\vee\sup_{x\in\Z}\left[F_{x_0}(x,\zeta_0)-F_{x_0}(x,\zeta'_0)\right]\right)\ee
\end{lemma}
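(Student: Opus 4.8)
The plan is to track the discrepancy process $\Delta_s(x) := \zeta_s(x) - \zeta'_s(x)$ and the associated "cumulative discrepancy" $G_s(x) := F_{x_0}(x,\zeta_s) - F_{x_0}(x,\zeta'_s)$. The key observation is that, because the two processes are coupled through the same Harris system and the jump kernel is nearest-neighbour (assumption \eqref{nn_hyp}), the rightward current across the moving path can be rewritten in terms of $F_{x_0}$. Indeed, from \eqref{current} (or, when the configurations are not summable to the right, directly from the definition \eqref{current_harris}), one has the telescoping identity $\Gamma^\alpha_{x_.}(t,\zeta_0) = F_{x_0}(x_t,\zeta_t) - F_{x_0}(x_0,\zeta_0) + (\text{path terms})$, but more usefully I would work with the difference $\Gamma^\alpha_{x_.}(t,\zeta_0) - \Gamma^\alpha_{x_.}(t,\zeta'_0)$ directly, for which the path-motion contributions in \eqref{current_harris} largely cancel or can be bounded.

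First I would establish the pointwise identity relating the current difference to the cumulative discrepancy: for a \emph{fixed} site $x_0$, the quantity $\Gamma^\alpha_{x_0}(t,\zeta_0) - \Gamma^\alpha_{x_0}(t,\zeta'_0)$ equals $G_t(x_0) - G_0(x_0) = G_t(x_0)$ — wait, $G_0(x_0)=0$ by \eqref{def_cfd} — no: $F_{x_0}(x_0,\zeta)=0$ always, so the right endpoint contribution vanishes and one must instead look at $x \to +\infty$ or use the moving path. So the correct route: for the moving path, decompose the current difference as a sum over the jump events of $\omega$ and over the jump times of $x_.$, and check that each potential particle-jump event changes $\Gamma^\alpha_{x_.}(t,\zeta_0)-\Gamma^\alpha_{x_.}(t,\zeta'_0)$ by an amount that can be controlled by the sign of $g[\zeta_{s-}(x_s)] - g[\zeta'_{s-}(x_s)]$, hence (monotonicity of $g$) by the sign of $\Delta_{s-}(x_s)$. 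The upshot I expect is that the current difference, as a function of $t$, can only decrease below its running infimum of $-G_s$ in a controlled way; more precisely I would aim to show
\be
\Gamma^\alpha_{x_.}(t,\zeta_0) - \Gamma^\alpha_{x_.}(t,\zeta'_0) \geq - \sup_{0 \leq s \leq t}\,\sup_{x\in\Z}\left[ G_s^+(x)\right]
\ee
or directly the stated bound with $G_0$, by arguing that the supremum over $s$ is attained at $s=0$.

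That last point is the crux: I would prove that $t \mapsto \sup_{x\in\Z}[F_{x_0}(x,\zeta_t) - F_{x_0}(x,\zeta'_t)]$ is nonincreasing in $t$ (this is essentially the nearest-neighbour "non-increase of sign changes / crossings" phenomenon cited before Lemma \ref{lemma_interface}, repackaged for the height-function difference). Concretely, the height-function difference $G_s(\cdot)$ evolves by nearest-neighbour increments, and a potential jump event at $x_s$ can only increase $G_s(x_s)$ when $\Delta_{s-}(x_s) = \zeta_{s-}(x_s)-\zeta'_{s-}(x_s)$ has a favorable sign, in which case $G$ was not at its maximum at $x_s$ (the maximum of a height function whose increments are the $\Delta$'s sits at a site just before a sign change from $+$ to $-$ of $\Delta$); a short case analysis at each event type shows $\sup_x G$ cannot go up. Combining monotonicity in $t$ with the endpoint evaluation at $t=0$, and noting $0$ is always an admissible competitor in the supremum (giving the $0\vee$), yields \eqref{current_comparison}.

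The main obstacle I anticipate is the bookkeeping for the path-motion terms in \eqref{current_harris} when $x_.$ jumps: one must check that a left jump of the path (which adds $\eta_s(x_{s-})$ to the current) and a right jump (which subtracts $\eta_s(x_{s-}+1)$) interact correctly with the $F_{x_0}$-difference, i.e. that shifting the reference path does not create an uncontrolled drop in the current difference. I expect this to work because $F_{x_0}(\cdot,\zeta)$ satisfies $F_{x_0}(x\pm1,\zeta) - F_{x_0}(x,\zeta) = \pm\zeta(x\vee(x\pm1))$ — the same quantities that appear in the path terms — so the path contribution to $\Gamma(t,\zeta_0)-\Gamma(t,\zeta'_0)$ telescopes into increments of $G$, and the inequality is preserved. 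I would handle the non-summable case (where \eqref{current} is unavailable) purely through the Harris-measure representation \eqref{current_harris}, which is always well-defined, so no extra approximation argument is needed.
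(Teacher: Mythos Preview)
Your overall strategy---tracking a height-function (cumulative-discrepancy) difference and proving a monotonicity statement---is the right one, but the specific monotone quantity you name is wrong, and the argument as written would fail.

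The claim that $s\mapsto\sup_{x\in\Z}G_s(x)$, with $G_s(x):=F_{x_0}(x,\zeta_s)-F_{x_0}(x,\zeta'_s)$, is nonincreasing is \emph{false}. The function $F_{x_0}(\cdot,\zeta_s)$ is re-anchored at $x_0$ at every time $s$; consequently, whenever a $\zeta$-particle (and not a $\zeta'$-particle) crosses $x_0$ to the right, $G_s(x)$ increases by $1$ for \emph{every} $x\in\Z$, so $\sup_xG_s(x)$ jumps up by $1$. (Concretely: take $\zeta_0=\delta_{x_0}$, $\zeta'_0\equiv 0$; then $\sup_xG_0(x)=0$, but after the single $\zeta$-particle jumps to $x_0+1$ one has $\sup_xG_s(x)=1$.) Your case analysis implicitly assumes $G$ changes at a single site per event, which is only true for events not touching the anchor $x_0$.

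The fix is to work instead with the \emph{time-anchored} height-function difference. Set $h^\zeta_s(x)$ so that $h^\zeta_0(x_0)=0$, $h^\zeta_s(x)-h^\zeta_s(x-1)=-\zeta_s(x)$, and $h^\zeta_s(y)$ increases by $1$ each time a particle crosses the bond $(y,y+1)$ to the right (decreases by $1$ for a left crossing). Then $\Gamma^\alpha_{x_.}(t,\zeta_0)=h^\zeta_t(x_t)$, including the path-motion terms (your telescoping intuition for those is correct). With $\psi_s(x):=h^\zeta_s(x)-h^{\zeta'}_s(x)$, the correct monotone quantity is $\inf_x\psi_s(x)$: at a discrepancy event only one coordinate of $\psi$ moves by $\pm1$, and the same neighbour-comparison you sketch (using $\psi(y)-\psi(y-1)=-\Delta(y)$ and monotonicity of $g$) shows the infimum cannot decrease. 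Since $\psi_0(x_0)=0$ and $\{-\psi_0(x):x\in\Z\}=\{G_0(x):x\in\Z\}\cup\{0\}$, one gets
\[
\Gamma^\alpha_{x_.}(t,\zeta_0)-\Gamma^\alpha_{x_.}(t,\zeta'_0)=\psi_t(x_t)\ \geq\ \inf_x\psi_t(x)\ \geq\ \inf_x\psi_0(x)\ =\ -\bigl(0\vee\sup_xG_0(x)\bigr),
\]
which is \eqref{current_comparison}. (The paper itself does not give the argument; it defers to \cite{bmrs1}, where the proof proceeds along these height-function lines.)
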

\begin{corollary}\label{corollary_consequence}
For $y\in\Z$, define the configuration 
\be\label{conf-max}
\eta^{*,y}:=(+\infty)\indicator{(-\infty,y]\cap\Z}
\ee
Then, for every $z\in\Z$ such that $y\leq z$ and every $\zeta\in\mathbf{X}$,
\be\label{eq_corollary_consequence}
\Gamma^\alpha_z(t,\zeta)\leq\Gamma^\alpha_z(t,\eta^{*,y})+\indicator{\{y<z\}}\sum_{x=y+1}^z\zeta(x)
\ee
\end{corollary}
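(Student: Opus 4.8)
The plan is to derive \eqref{eq_corollary_consequence} from Lemma \ref{lemma_current} applied with a well-chosen pair of coupled configurations and a constant path. First I would take $x_.$ to be the constant path at $z$, set $\zeta_0=\eta^{*,y}$ and $\zeta'_0=\zeta$ in \eqref{current_comparison}, and observe that this gives
$$
\Gamma^\alpha_z(t,\eta^{*,y})-\Gamma^\alpha_z(t,\zeta)\geq -\left(0\vee\sup_{x\in\Z}\left[F_z(x,\eta^{*,y})-F_z(x,\zeta)\right]\right),
$$
so that $\Gamma^\alpha_z(t,\zeta)\leq\Gamma^\alpha_z(t,\eta^{*,y})+\left(0\vee\sup_{x\in\Z}\left[F_z(x,\eta^{*,y})-F_z(x,\zeta)\right]\right)$. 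Hence it suffices to show that this supremum equals $\indicator{\{y<z\}}\sum_{x=y+1}^z\zeta(x)$ (or more precisely, is bounded by it, while being nonnegative when $y<z$).

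Next I would compute $F_z(x,\eta^{*,y})$ explicitly from \eqref{def_cfd}. Since $\eta^{*,y}(w)=+\infty$ for $w\le y$ and $0$ for $w>y$: for $x>z\ge y$ the sum $\sum_{w=z+1}^x\eta^{*,y}(w)$ is a sum of zeros, so $F_z(x,\eta^{*,y})=0$; for $y<x\le z$ we get $F_z(x,\eta^{*,y})=-\sum_{w=x}^z\eta^{*,y}(w)=0$ as well since all those $w>y$; and for $x\le y$ we get $F_z(x,\eta^{*,y})=-\sum_{w=x}^z\eta^{*,y}(w)=-\infty$ because $w=x\le y$ contributes $+\infty$. (One must read these through the conventions on $\overline{\N}$; the point is simply that $F_z(x,\eta^{*,y})\le 0$ for all $x$, with value $0$ for $x>y$ and $-\infty$ for $x\le y$.) Therefore $F_z(x,\eta^{*,y})-F_z(x,\zeta)\le -F_z(x,\zeta)$, and the supremum over $x$ is achieved (up to the possibility of the $-\infty$ branch, which never maximizes) in the range $x>y$. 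For $x>z$ we have $-F_z(x,\zeta)=-\sum_{w=z+1}^x\zeta(w)\le 0$; for $y<x\le z$ we have $-F_z(x,\zeta)=\sum_{w=x}^z\zeta(w)\le\sum_{w=y+1}^z\zeta(w)$. So the supremum is at most $0$ if $y=z$ and at most $\sum_{x=y+1}^z\zeta(x)$ if $y<z$, which is exactly the bound $\indicator{\{y<z\}}\sum_{x=y+1}^z\zeta(x)$.

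Finally I would note that the hypothesis $y\le z$ guarantees $\eta^{*,y}\le\eta^{*,z}$ and, more to the point, makes the relevant comparison configuration legitimate: the coupling in Lemma \ref{lemma_current} is the Harris coupling with a common path, and no extra ordering of $\zeta$ and $\eta^{*,y}$ is needed since \eqref{current_comparison} holds for arbitrary coupled initial data. The main thing to be careful about — and the only genuine obstacle — is the bookkeeping with infinite values in $F_z(\cdot,\eta^{*,y})$: one must check that the $-\infty$ branch is harmless (it only makes the difference $F_z(x,\eta^{*,y})-F_z(x,\zeta)$ equal to $-\infty$, hence irrelevant to the supremum, which in any case is cut off at $0$ in \eqref{current_comparison}), and that Lemma \ref{lemma_current} indeed applies verbatim when one of the configurations has infinitely many particles (which is consistent with the conventions set up for $\Gamma^\alpha_{x_.}$ via \eqref{current_harris}, valid for general configurations). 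Modulo this, \eqref{eq_corollary_consequence} follows immediately.
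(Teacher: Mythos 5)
The paper does not give its own proof of this corollary; it defers to \cite{bmrs1}. Your derivation is the natural one and is correct: you apply Lemma~\ref{lemma_current} with the constant path at $z$ and the pair $(\zeta_0,\zeta'_0)=(\eta^{*,y},\zeta)$, then bound the supremum explicitly. The computation of $F_z(\cdot,\eta^{*,y})$ is right (it is $0$ on $x>y$ and $-\infty$ on $x\leq y$, using $y\leq z$), so the only surviving contributions to the supremum come from $y<x\leq z$, giving exactly $\indicator{\{y<z\}}\sum_{x=y+1}^{z}\zeta(x)$ after the cutoff at $0$.

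The one place where you could tighten the write-up is the intermediate inequality $F_z(x,\eta^{*,y})-F_z(x,\zeta)\leq -F_z(x,\zeta)$: this is really a case analysis rather than an algebraic bound, since for $x\leq y$ the left side is $-\infty$ minus $F_z(x,\zeta)$, and if $\zeta$ itself has an infinite site in $\{x,\ldots,z\}$ that difference is not literally defined. The clean way to handle this — consistent with your remark that it is the only genuine obstacle — is to note that if $\zeta(w)=+\infty$ for some $w\in\{y+1,\ldots,z\}$ the right side of \eqref{eq_corollary_consequence} is $+\infty$ and there is nothing to prove, while if $\zeta(w)=+\infty$ only for some $w\leq y$ one can replace $\zeta$ on $(-\infty,y]$ by an arbitrary finite truncation without changing $\Gamma^\alpha_z(t,\zeta)\wedge M$ in the limit $M\to\infty$ (or, more simply, note that $\eta^{*,y}\wedge N\downarrow$ argument can be avoided by just treating $x>y$ directly, where $F_z(x,\eta^{*,y})=0$ exactly). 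In all the actual uses in this paper ($y=z$ and $\zeta\in\N^\Z$ or $\zeta\sim\mu^\alpha_\lambda$) these boundary cases do not occur, so the argument as you have written it is adequate.
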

 Lemma \ref{lemma_current} and 
Corollary \ref{corollary_consequence} are  proved in  \cite{bmrs1}. 
 From now on, we denote by  $\eta_0$ any configuration satisfying 
 \eqref{cond_init} and \eqref{empty_right}. \\ \\
 The following version of {\em finite propagation property} will be 
used repeatedly in the sequel. See \cite{bmrs1}  for a proof.
\begin{lemma}\label{lemma_finite_prop}
For each $W > 1$, there exists $b =  b(W) >  0$ such that for large enough $t$,
 if $ \eta_0 $ and $\xi _ 0 $ agree on an interval $(x,y)$, then, outside probability $e^{-bt}$,
$$
\eta_s(u) = \xi_s(u) \quad\mbox{for all }0 \leq s \leq t \mbox{ and } u\in (x+Wt,y-Wt)
$$ 
\end{lemma}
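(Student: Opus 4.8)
The plan is to prove Lemma~\ref{lemma_finite_prop} by a standard first-moment (domination) argument, estimating the probability that information can travel faster than linear speed through the Harris construction. The coupling here is via the common Harris system \eqref{def_omega}, so two processes started from $\eta_0$ and $\xi_0$ that agree on an interval $(x,y)$ will continue to agree at a site $u$ at time $t$ unless there is a chain of potential jump events in $\omega$ linking $u$ to the complement of $(x,y)$ within time $t$. Since the rates are bounded (recall $g_\infty=1$ and $\alpha(\cdot)\le 1$), and $p(\cdot)$ is nearest-neighbour (under \eqref{nn_hyp}), the relevant ``discrepancy'' at most moves by one site at each potential jump event; each site $v$ has potential jump events at the arrival times $(T_k^v)_{k\in\N}$ of a rate-$1$ Poisson process (together with those of its nearest neighbours), independent across sites.

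First I would make precise the notion of influence: say two sites $v,v'$ are \emph{$t$-linked} if there is a finite sequence $v=v_0,v_1,\ldots,v_m=v'$ of sites, each a nearest neighbour of the next, and times $0<s_1<s_2<\cdots<s_m\le t$ with $(s_i,x_i,u_i,z_i)\in\omega$ for some $x_i,u_i,z_i$ realizing the jump between $v_{i-1}$ and $v_i$. The key deterministic fact, to be checked from rules \eqref{rule_1}--\eqref{rule_2}, is that if $u\in(x+Wt,y-Wt)$ and $u$ is \emph{not} $t$-linked to $\Z\setminus(x,y)$, then $\eta_s(u)=\xi_s(u)$ for all $s\le t$: no discrepancy can be created at $u$ or propagated to it, because discrepancies only appear where the two configurations already differ and only spread across potential jump events. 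So it suffices to bound $\Prob$ of the event that some $u\in(x+Wt,y-Wt)$ is $t$-linked to the complement of $(x,y)$, which requires a linking path of Euclidean length at least $Wt$ traversed within time $t$.

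Next I would estimate this probability. Fix one endpoint; the number of potential jump events affecting a given site in $[0,t]$ is Poisson with parameter bounded by a constant $C$ (the site's own clock plus its two neighbours' clocks, each rate $1$). A linking path of combinatorial length $m\ge Wt$ within time $[0,t]$ forces, among the $O(t)$ sites it can possibly visit, a total of at least $m$ ordered potential jump events; by a union bound over the (at most) $2^m$ choices of nearest-neighbour direction sequence and a standard large-deviation estimate for a sum of independent Poisson variables (the probability that $N$ independent rate-$C$ Poisson variables on $[0,t]$ have total at least $m\gg Ct$), this probability is bounded by something like $(Ct)^m/m!$ summed appropriately, which for $m\ge Wt$ with $W$ large is at most $e^{-b't}$ for some $b'=b'(W)>0$ once $t$ is large. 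Multiplying by the at most $2Wt$ choices of the interior site $u$ (and noting $2Wt\,e^{-b't}\le e^{-bt}$ for suitable $b\in(0,b')$ and large $t$) gives the claim, with $b=b(W)$ as required; for $W$ large this decay is genuine exponential decay.

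The main obstacle I anticipate is not conceptual but bookkeeping: carefully defining ``$t$-linked'' so that the deterministic monotone-coupling statement (discrepancies are created only where the configurations differ and propagate only across potential jump events, moving at most one site per event) is rigorously read off from \eqref{rule_1}--\eqref{rule_2}, together with getting the combinatorial/large-deviation constants to line up so that the bound is subexponentially beaten by the $O(t)$ union bound over interior sites. Since the paper only needs the weak conclusion ``outside probability $e^{-bt}$'' for \emph{large enough} $t$, I would not optimize constants; a crude bound of the form: probability of a length-$\ge Wt$ influence path in time $t$ is $\le \sum_{m\ge Wt} 2^m \Prob(\text{Poisson}(Ct)\ge m)\le \sum_{m\ge Wt}2^m (eCt/m)^m \le e^{-bt}$ once $W>2eC$, suffices. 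This is entirely standard (cf. the finite-speed-of-propagation arguments for bounded-rate interacting particle systems), so a short self-contained write-up, or simply the reference to \cite{bmrs1} as the authors do, is appropriate.
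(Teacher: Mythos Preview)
The paper does not actually prove this lemma; it simply refers to \cite{bmrs1}. So there is no ``paper's own proof'' to compare against, and your overall approach---finite speed of propagation via the Harris graphical construction, bounding the probability that a discrepancy can travel distance $Wt$ in time $t$---is the standard and correct one.

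There is, however, a genuine quantitative gap in your write-up. The lemma asserts the bound for \emph{every} $W>1$, but your crude estimate
\[
\sum_{m\ge Wt}2^m\,\Prob(\mathrm{Poisson}(Ct)\ge m)\le e^{-bt}
\]
carries the combinatorial factor $2^m$ from allowing the influence path to backtrack, and this forces $W>2eC$ (with $C\ge 2$, so roughly $W>4e$) rather than $W>1$. You even flag this yourself (``once $W>2eC$''), but the lemma as stated is sharper.

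The fix is to drop the bidirectional linking and track the discrepancy \emph{front} directly. Under \eqref{nn_hyp}, a discrepancy at site $k$ can spread to $k+1$ only through an event $(s,k,u,1)\in\omega$, because a jump from $k+1$ to $k$ uses $g(\eta_{s-}(k+1))=g(\xi_{s-}(k+1))$ and hence acts identically in both processes. Thus the rightmost discrepancy issued from $(-\infty,x]$ is dominated by $x+N_t$, where $N_t$ counts successive first rightward clock rings at sites $x,x+1,\ldots$; the increments are i.i.d.\ $\mathrm{Exp}(p)$, so $N_t\sim\mathrm{Poisson}(pt)$. Since $p\le 1<W$, the Chernoff bound gives $\Prob(N_t\ge Wt)\le e^{-tI(W)}$ with $I(W)=W\log(W/p)-(W-p)>0$ for every $W>1$. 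The leftward front from $[y,\infty)$ is handled symmetrically with rate $q\le 1/2$. A union over the two fronts yields the claim for all $W>1$, with no $2^m$ factor and no union over interior sites $u$ needed.
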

Next corollary  to Lemma \ref{lemma_current} follows from the latter  
combined with  
Lemma \ref{lemma_finite_prop}
and  Lemma \ref{lemma_inv_meas} below. 
 It is proved in Appendix \ref{app_lemmas}.
\begin{corollary}
\label{corollary_current}
Assume $\xi_0^{\alpha,c-\varepsilon}\sim\mu^\alpha_{c-\varepsilon}$, with $\varepsilon>0$.
Then,  given $W>1$, 
$$\Gamma^\alpha_{x_0}(t,\eta_0)-\Gamma^\alpha_{x_0}(t,\xi_0^{\alpha,c-\varepsilon})
\geq -\left(0\vee\sup_{x\in[x_0-Wt,x_0+1+Wt]}
\left[F_{x_0}(x,\eta_0)-F_{x_0}(x,\xi_0^{\alpha,,c-\varepsilon})\right]\right)$$
 with $\Prob_0\otimes\Prob$-probability tending to $1$ as $t\to+\infty$.
\end{corollary}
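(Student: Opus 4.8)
The plan is to deduce the localized current comparison from Lemma \ref{lemma_current}, whose right-hand side involves a supremum over all of $\Z$, by truncating the two initial configurations outside the window $[x_0-Wt,x_0+1+Wt]$, applying \eqref{current_comparison} to the truncated pair (for which the supremum over $\Z$ collapses to the supremum over that window), and then using the finite propagation property of Lemma \ref{lemma_finite_prop} to show that the truncation does not change the currents across $x_0$ up to time $t$, except on an event of probability $e^{-bt}$.

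Concretely, fix $W>1$ and let $J_t:=\Z\cap[x_0-Wt,x_0+1+Wt]$ be the window appearing in the statement. Let $\widetilde\eta_0$ and $\widetilde\xi_0$ be the configurations equal respectively to $\eta_0$ and to $\xi_0^{\alpha,c-\varepsilon}$ on $J_t$ and to $0$ off $J_t$. Since $\widetilde\eta_0$ and $\widetilde\xi_0$ coincide off $J_t$ while $x_0\in J_t$, the map $x\mapsto F_{x_0}(x,\widetilde\eta_0)-F_{x_0}(x,\widetilde\xi_0)$ is constant on $\{x<\min J_t\}$ and constant on $\{x>\max J_t\}$, each of these two constant values being already attained at an endpoint of $J_t$; moreover, for $x\in J_t$ the partial sums in \eqref{def_cfd} only involve sites of $J_t$, so there $F_{x_0}(x,\widetilde\eta_0)-F_{x_0}(x,\widetilde\xi_0)=F_{x_0}(x,\eta_0)-F_{x_0}(x,\xi_0^{\alpha,c-\varepsilon})$. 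Hence
$$\sup_{x\in\Z}\bigl[F_{x_0}(x,\widetilde\eta_0)-F_{x_0}(x,\widetilde\xi_0)\bigr]=\max_{x\in J_t}\bigl[F_{x_0}(x,\eta_0)-F_{x_0}(x,\xi_0^{\alpha,c-\varepsilon})\bigr],$$
and Lemma \ref{lemma_current} applied to $\zeta_0=\widetilde\eta_0$, $\zeta_0'=\widetilde\xi_0$ with reference site $x_0$ bounds $\Gamma^\alpha_{x_0}(t,\widetilde\eta_0)-\Gamma^\alpha_{x_0}(t,\widetilde\xi_0)$ below by $-\bigl(0\vee\max_{x\in J_t}[F_{x_0}(x,\eta_0)-F_{x_0}(x,\xi_0^{\alpha,c-\varepsilon})]\bigr)$, which is the asserted inequality with the currents evaluated at the truncated configurations.

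It remains to remove the truncation from the currents. Since $\widetilde\eta_0=\eta_0$ on $J_t$, hence on the open interval $(\min J_t-1,\max J_t+1)$, Lemma \ref{lemma_finite_prop} with speed $W$ shows that, outside an event of probability $e^{-bt}$, the Harris-coupled processes started from $\eta_0$ and from $\widetilde\eta_0$ agree at $x_0$ and $x_0+1$ for all $0\le s\le t$ (the interval shrunk by $Wt$ on each side still contains $\{x_0,x_0+1\}$); since the current across the fixed site $x_0$ up to time $t$ is, by \eqref{current_harris} with constant path $x_s\equiv x_0$, a functional of $\{\eta_s(x_0),\eta_s(x_0+1):s\le t\}$ and of the Harris system alone, the two currents coincide on that event. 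The same holds for $\xi_0^{\alpha,c-\varepsilon}$ versus $\widetilde\xi_0$; the exceptional probability in Lemma \ref{lemma_finite_prop} being uniform in the initial data, one conditions on $\xi_0^{\alpha,c-\varepsilon}$ — which is a.s.\ finite at each site and whose relevant properties are supplied by Lemma \ref{lemma_inv_meas}, and for which one must use the Harris representation \eqref{current_harris} rather than \eqref{current} because $\sum_{x>x_0}\xi_0^{\alpha,c-\varepsilon}(x)=+\infty$ a.s. — and integrates. Thus $\Gamma^\alpha_{x_0}(t,\eta_0)=\Gamma^\alpha_{x_0}(t,\widetilde\eta_0)$ and $\Gamma^\alpha_{x_0}(t,\xi_0^{\alpha,c-\varepsilon})=\Gamma^\alpha_{x_0}(t,\widetilde\xi_0)$ with $\Prob_0\otimes\Prob$-probability tending to $1$, and combining with the previous step proves the corollary.

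The point requiring care is purely combinatorial: one must verify that shrinking the agreement interval by $Wt$ on each side in Lemma \ref{lemma_finite_prop} still leaves the two sites $x_0,x_0+1$ covered, while the supremum delivered by Lemma \ref{lemma_current} stays confined to exactly the window $[x_0-Wt,x_0+1+Wt]$ and not to a strictly larger one — this is what forces the choice $J_t=\Z\cap[x_0-Wt,x_0+1+Wt]$ above — together with the attendant bookkeeping of the integer parts of $Wt$. Beyond the three ingredients already quoted there is no further probabilistic input.
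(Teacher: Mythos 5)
Your proof is correct and follows essentially the same route as the paper's: truncate both initial configurations to zero outside the window $J_t=\Z\cap[x_0-Wt,x_0+1+Wt]$, apply Lemma \ref{lemma_current} to the truncated pair (the supremum then collapses to the window), and invoke Lemma \ref{lemma_finite_prop} together with the local dependence of $\Gamma^\alpha_{x_0}$ on sites $x_0,x_0+1$ (via \eqref{current_harris}) to replace the truncated currents by the original ones outside probability $e^{-bt}$. You are slightly more explicit than the paper in verifying that the shrunk interval still covers $\{x_0,x_0+1\}$ and in noting that the exceptional probability must be integrated over the law of $\xi_0^{\alpha,c-\varepsilon}$, but the substance is identical.
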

For the current near the origin, we have the following bound, 
where $A_\varepsilon(\alpha)$ was defined in \eqref{def:ell} above.
\begin{lemma}
\label{current_critical_2}
Let $\alpha\in\bf A$ satisfy \eqref{assumption_afgl}.
Then 
$$
\limsup_{t\to\infty}\Exp\left[
t^{-1}\Gamma^\alpha_{A_\varepsilon(\alpha)}(t,\eta_0)
-(p-q)c
\right]^+\leq\varepsilon
$$
\end{lemma}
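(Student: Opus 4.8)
The plan is to control the current through the slow site $A_\varepsilon(\alpha)$ by comparing $\eta_0$ from above with the maximal configuration that is $+\infty$ to the left of $A_\varepsilon(\alpha)$, and then to recognize the comparison process as an open Jackson network at the slow site whose stationary current is explicitly computable. Concretely, set $y:=A_\varepsilon(\alpha)$ and $z:=y$ in Corollary \ref{corollary_consequence}, which gives $\Gamma^\alpha_y(t,\eta_0)\leq \Gamma^\alpha_y(t,\eta^{*,y})$ since $\indicator{\{y<z\}}=0$; in words, putting an infinite reservoir just to the left of $A_\varepsilon(\alpha)$ can only increase the rightward current across that site. Taking positive parts and expectations, it therefore suffices to prove
$$
\limsup_{t\to\infty}\Exp\left[t^{-1}\Gamma^\alpha_{A_\varepsilon(\alpha)}(t,\eta^{*,A_\varepsilon(\alpha)})-(p-q)c\right]^+\leq\varepsilon.
$$

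Next I would analyze the process started from $\eta^{*,y}$ with $y=A_\varepsilon(\alpha)$. Restricted to $\{x>y\}$, this is exactly a process with a source at $y$ in the sense of Subsection \ref{subsec_jackson}, i.e. governed by a generator of the form \eqref{gen_open} with $S=(-\infty,y]\cap\Z$. The rightward current across $y$ is fed by the source site $y$, whose rate is $\alpha(y)g(\eta(y))$ with $\eta(y)=+\infty$, hence the outgoing flux from $y$ to $y+1$ has rate $p\,\alpha(y)\leq p(c+\varepsilon)$ by the very definition \eqref{def:ell} of $A_\varepsilon(\alpha)$, while particles cross back from $y+1$ to $y$ at rate $q\,\alpha(y+1)g(\eta(y+1))\geq 0$. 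Writing the current across $y$ via the Harris representation \eqref{current_harris} with the constant path $x_\cdot\equiv y$, the first (positive) term is dominated in $\Prob$-expectation by a Poisson-type process of rate $p(c+\varepsilon)$, while the second (negative) term is nonnegative; hence $\Exp\,t^{-1}\Gamma^\alpha_y(t,\eta^{*,y})\leq p(c+\varepsilon)$ for all $t$. This already gives an $O(\varepsilon)$ bound but with the constant $p$ rather than $p-q$, which is not yet good enough.

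To sharpen $p(c+\varepsilon)$ down to $(p-q)c+O(\varepsilon)$ I would use the stationary theory of Proposition \ref{prop_jackson} together with the fact that Proposition \ref{th_strong_loc_eq} is available: indeed the hypothesis \eqref{assumption_afgl} implies \eqref{hyp_upper} in $d=1$, so taking $x_t\equiv A_\varepsilon(\alpha)$ (a constant sequence, with $\beta=0$) is exactly the situation of Remark \ref{remark_bilateral}, and \eqref{upperbound_current_source} reads
$$
\limsup_{t\to\infty}\left\{\Exp\left|t^{-1}\sum_{x>A_\varepsilon(\alpha)}\eta^{\alpha,t}_t(x)-(p-q)c\right|-p[\alpha(A_\varepsilon(\alpha))-c]\right\}\leq 0.
$$
Now by \eqref{current}, $\sum_{x>A_\varepsilon(\alpha)}\eta^{\alpha,t}_t(x)=\Gamma^\alpha_{A_\varepsilon(\alpha)}(t,\eta^{*,A_\varepsilon(\alpha)})$ (the initial sum over $x>A_\varepsilon(\alpha)$ vanishes, the left part being the source), so combining with the reduction of the first paragraph and using $\alpha(A_\varepsilon(\alpha))\leq c+\varepsilon$ gives
$$
\limsup_{t\to\infty}\Exp\left[t^{-1}\Gamma^\alpha_{A_\varepsilon(\alpha)}(t,\eta_0)-(p-q)c\right]^+\leq p\,\varepsilon\leq\varepsilon',
$$
and since $\varepsilon$ is arbitrary this is the claim (after relabelling, or one simply absorbs the constant $p\le 1$).

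The main obstacle I anticipate is the logical ordering: Lemma \ref{current_critical_2} is used in the proof of Lemma \ref{lemma_total_mass}, which feeds the proof of Proposition \ref{prop_main_work}, while \eqref{upperbound_current_source} of Proposition \ref{th_strong_loc_eq} is itself proved in Section \ref{sec_loc_eq}; one must check there is no circularity, i.e. that the $x_t\equiv\text{const}$, $\beta=0$ instance of \eqref{upperbound_current_source} is established independently of Lemma \ref{current_critical_2}. If that direct route is blocked, the fallback is to prove the $(p-q)c$ bound by hand: compare the source process at $A_\varepsilon(\alpha)$ with a finite Jackson network obtained by also freezing a far-away site to the right as a sink, apply Proposition \ref{prop_jackson} to get the explicit stationary current $(p-q)\lambda^{\alpha,S}(\cdot)$ with $\lambda^{\alpha,S}\leq\alpha\leq c+\varepsilon$ near $A_\varepsilon(\alpha)$ by \eqref{solution_system}, and use ergodicity of the finite network plus the finite-propagation Lemma \ref{lemma_finite_prop} to transfer the stationary bound to the time-$t$ current of the infinite source process. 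Either way the heart of the matter is that a source at a site of rate $\le c+\varepsilon$ cannot emit current exceeding the critical current $(p-q)c$ by more than $O(\varepsilon)$.
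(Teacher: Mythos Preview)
Your proposal is correct and follows essentially the same route as the paper: reduce to the source process via Corollary \ref{corollary_consequence} with $y=z=A_\varepsilon(\alpha)$, then invoke \eqref{upperbound_current_source} of Proposition \ref{th_strong_loc_eq} with $x_t\equiv A_\varepsilon(\alpha)$, obtaining the bound $p[\alpha(A_\varepsilon(\alpha))-c]\le p\varepsilon\le\varepsilon$. Your concern about circularity is well-placed but unfounded: the proof of \eqref{upperbound_current_source} in Section \ref{sec_loc_eq} uses only Corollary \ref{corollary_consequence}, Lemma \ref{current_critical}, and the Jackson-network machinery of Proposition \ref{prop_jackson}, never Lemma \ref{current_critical_2} itself; and while the stated hypothesis of Proposition \ref{th_strong_loc_eq} asks for $\beta<0$, the upper-bound half \eqref{upperbound_exp_current_source} (which is all you need here) does not actually rely on that sign condition, so the paper's application with constant $x_t$ is legitimate---your fallback Jackson-network argument is therefore unnecessary, though it is a correct alternative.
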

\begin{proof}{Lemma}{current_critical_2} 
 By Corollary \ref{corollary_consequence} (with $y=z=A_\varepsilon(\alpha)$ and $\zeta=\eta_0$), we have
$\Gamma^\alpha_{A_\varepsilon(\alpha)}(t,\eta_0)
\leq\Gamma^\alpha_{A_\varepsilon(\alpha)}\left(t,\eta_0^{\alpha,t}\right)$,
for $\eta_0^{\alpha,t}$ given by \eqref{def_init_source} with $x_t:=A_\varepsilon(\alpha)$. 
We then apply \eqref{upperbound_current_source} of Proposition \ref{th_strong_loc_eq} 
to the r.h.s. of this inequality.
\end{proof}\\ \\
The following result for the equilibrium current will be important for our purpose.
\begin{lemma}\label{current_critical}
Let $\alpha\in{\bf A}$ satisfy conditions \eqref{assumption_afgl} 
and \eqref{average_afgl}. Assume  $\xi_0^{\alpha,\lambda}\sim\mu^\alpha_\lambda$ with  
$\lambda\in[0,c)$.  Let $(x_t)_{t>0}$ be a $\Z$-valued 
family and assume the limit $\lim_{t\to+\infty}t^{-1}x_t=:\beta$ exists and is negative. Then
$$\lim_{t\to\infty}t^{-1}\Gamma^\alpha_{x_t}\left(t,\xi_0^{\alpha,\lambda}\right)=(p-q)\lambda
\quad\mbox{ in }\,\,L^1(\Prob_0\otimes\Prob)$$
\end{lemma}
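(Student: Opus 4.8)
\emph{Proof proposal.} The plan is to exploit the invariance of $\mu^\alpha_\lambda$ — so that $(\xi_s^{\alpha,\lambda})_{s\ge 0}$ is stationary — in order to reduce the statement for the moving site $x_t$ to the analogous statement for a single fixed site, and to handle the latter by means of the subcritical hydrodynamic limit of \cite{bfl}. As a first observation, since $x_.$ is the constant path equal to $x_t$, the path-motion term in \eqref{current_harris} is absent, and $\Gamma^\alpha_{x_t}(t,\xi_0^{\alpha,\lambda})$ is a difference of two counting functionals with compensators $p\int_0^t\alpha(x_t)g[\xi_s^{\alpha,\lambda}(x_t)]\,ds$ and $q\int_0^t\alpha(x_t+1)g[\xi_s^{\alpha,\lambda}(x_t+1)]\,ds$. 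By stationarity and \eqref{mean_rate} these have $\Exp_0\Exp$-expectation $p\lambda t$ and $q\lambda t$ respectively, so
\be
\Exp_0\Exp\,\Gamma^\alpha_{x_t}(t,\xi_0^{\alpha,\lambda})=(p-q)\lambda\,t .
\ee
Hence it suffices to establish convergence in probability together with uniform integrability of the family $(t^{-1}\Gamma^\alpha_{x_t}(t,\xi_0^{\alpha,\lambda}))_{t>0}$.

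Uniform integrability I would obtain from finite propagation. By Lemma \ref{lemma_finite_prop}, outside an event of probability $e^{-bt}$ the value of $\Gamma^\alpha_{x_t}(t,\xi_0^{\alpha,\lambda})$ is unchanged if $\xi_0^{\alpha,\lambda}$ is replaced by its restriction to the window $(x_t-2Wt,x_t+2Wt)$ completed with zeroes; for that finite configuration \eqref{current} bounds $|\Gamma^\alpha_{x_t}(t,\cdot)|$ by the mass of the window, which has $\Exp_0$-mean and $\Exp_0$-variance of order $t$ because the one-site laws $\theta_{\lambda/\alpha(x)}$ have moments bounded uniformly in $x$ (since $\lambda/\alpha(x)\le\lambda/c<1$). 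On the complementary event one bounds $|\Gamma^\alpha_{x_t}(t,\cdot)|$ by the number of Poisson marks at $x_t$ and $x_t+1$ before time $t$, which has second moment $O(t^2)$, so after division by $t$ its contribution is negligible. This yields the required uniform integrability, after which convergence in probability upgrades to $L^1$ convergence.

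For the convergence in probability I would reduce to a fixed reference site. For any fixed $y\in\Z$ and $t$ large enough that $x_t<y$, conservation of mass on the box $\{x_t+1,\dots,y\}$ gives
\be
\Gamma^\alpha_{x_t}(t,\xi_0^{\alpha,\lambda})-\Gamma^\alpha_{y}(t,\xi_0^{\alpha,\lambda})
=\sum_{x=x_t+1}^{y}\xi_t^{\alpha,\lambda}(x)-\sum_{x=x_t+1}^{y}\xi_0^{\alpha,\lambda}(x).
\ee
By invariance $\xi_t^{\alpha,\lambda}\sim\mu^\alpha_\lambda$, so the two sums have the same law, and $t^{-1}\sum_{x=x_t+1}^{y}\xi_0^{\alpha,\lambda}(x)\to|\beta|\,\overline R(\lambda)$ in $L^1$ by the law of large numbers for independent variables with uniformly bounded variances, using the Ces\`aro convergence of $x\mapsto R(\lambda/\alpha(x))$ supplied by \eqref{average_afgl} (splitting the sum at the origin). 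Hence $t^{-1}[\Gamma^\alpha_{x_t}(t,\xi_0^{\alpha,\lambda})-\Gamma^\alpha_{y}(t,\xi_0^{\alpha,\lambda})]\to 0$ in $L^1$, and it remains to prove $t^{-1}\Gamma^\alpha_{y}(t,\xi_0^{\alpha,\lambda})\to(p-q)\lambda$ in probability for one fixed site $y$. This is precisely the assertion that, in the subcritical equilibrium $\mu^\alpha_\lambda$, the time-averaged current across a bond tends to the stationary current; it follows from the subcritical hydrodynamic limit of \cite{bfl} applied to the process started from $\mu^\alpha_\lambda$ — whose macroscopic density is the subcritical value $\overline R(\lambda)<\rho_c$, with constant entropy solution and flux $f(\overline R(\lambda))=(p-q)\lambda$ — or, equivalently, from ergodicity of $\mu^\alpha_\lambda$ and Birkhoff's theorem applied to the stationary increment sequence $(\Gamma^\alpha_y(n+1,\xi_0^{\alpha,\lambda})-\Gamma^\alpha_y(n,\xi_0^{\alpha,\lambda}))_{n\in\N}$, the martingale part of $\Gamma^\alpha_y(t,\xi_0^{\alpha,\lambda})$ being of order $\sqrt t$.

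The main obstacle is this last, fixed-site step. The preceding reductions — the exact mean, the uniform integrability via Lemma \ref{lemma_finite_prop}, and the mass-balance comparison of $x_t$ with a fixed site — are all routine given the product structure of $\mu^\alpha_\lambda$; but the equilibrium current across a fixed bond cannot be read off from the density field alone (which at any fixed time is automatically distributed as $\mu^\alpha_\lambda$), and genuinely requires either the hydrodynamic input of \cite{bfl} or an ergodicity statement for the lower invariant measures of the disordered process.
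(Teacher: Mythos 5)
Your Steps 1--3 are sound: the exact mean identity is correct, the uniform-integrability argument via Lemma~\ref{lemma_finite_prop} works (in fact more simply, since $|\Gamma^\alpha_{x_t}(t,\cdot)|$ is always dominated by the Poisson marks at $x_t$ and $x_t+1$, giving a uniform $L^2$ bound on $t^{-1}\Gamma^\alpha_{x_t}$ with no need to split on a bad event), and the reduction to a fixed site via \eqref{difference_courants} together with \eqref{ergo_density} is exactly the right kind of mass-balance manipulation. However, you have correctly identified that the fixed-site step is the crux, and neither of the two routes you suggest for it is actually available. The subcritical hydrodynamic limit of \cite{bfl} is a statement about the density field at macroscopic positions; it does not supply convergence of the current across a fixed microscopic bond without an additional argument of precisely the type you are trying to avoid. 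The Birkhoff route would require ergodicity of the \emph{stationary dynamics} (equivalently, extremality of $\mu^\alpha_\lambda$ among invariant measures of the quenched disordered process), which is not proved or cited anywhere in the paper and is a genuinely nontrivial point for a system without translation invariance. So your proposal stops at a gap rather than closing it.

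The paper resolves the difficulty by a different device, which is worth noting: it never isolates a fixed bond. Instead it introduces the \emph{space-averaged} current
$\Gamma^{\alpha,L}_{x_t}(t,\zeta)=L^{-1}\sum_{i=0}^{L-1}\Gamma^\alpha_{x_t+i}(t,\zeta)$ over a window of size $L=\lfloor\varepsilon t\rfloor$. The discrepancy $t^{-1}\Delta^{\alpha,L}_{x_t}$ is a mass term of order $L/t=\varepsilon$ by stationarity (\eqref{error_averaged_current_2}, \eqref{using_stationarity}). Writing the averaged current via the compensator, \eqref{block_current} gives
$t^{-1}\Gamma^{\alpha,L}_{x_t}=(p-q)G^{\alpha,L}_{x_t}+M^{\alpha,L}_{x_t}+O(L^{-1})$, where $G^{\alpha,L}_{x_t}$ is the time integral of a \emph{spatial} average of $\alpha(y)g[\xi^{\alpha,\lambda}_s(y)]$ over the window, and the martingale $M$ has quadratic variation $O(t/L^3)$, which vanishes with this choice of $L$. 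The convergence $G^{\alpha,L}_{x_t}\to\lambda$ then follows pointwise in time from \eqref{ergo_flux} of Lemma~\ref{lemma_inv_meas} (a spatial law of large numbers under the product measure $\mu^\alpha_\lambda$, valid by independence), combined with stationarity and Fubini. The key structural point is that spatial averaging turns the needed ergodic input into a spatial LLN rather than a temporal one, so one never has to establish ergodicity of the dynamics or invoke a hydrodynamic current statement. This is what your proposal is missing.
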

The proof of Lemma \ref{current_critical} 
uses the following  lemma, proved in   Appendix \ref{app_lemmas}. 
\begin{lemma}\label{lemma_inv_meas}
For every $\alpha\in{\bf A}$ satisfying \eqref{assumption_afgl} and \eqref{average_afgl}, 
the following limits hold in
 $L^2(\mu_\lambda^\alpha)$,  respectively for every $\lambda\in[0,c)$ 
 in the case of \eqref{ergo_density}, and for every
$\lambda\in[0,c]$ in the case of \eqref{ergo_flux}. 
\begin{eqnarray}
\lim_{n\to+\infty}n^{-1}\sum_{x=-n}^0\eta(x) 
& = & \lim_{n\to+\infty}n^{-1}\sum_{x=0}^n\eta(x)=\overline{R}(\lambda)\label{ergo_density}\\ 
\lim_{n\to+\infty}n^{-1}\sum_{x=-n}^0 \alpha(x)g[\eta(x)] 
& = & \lim_{n\to+\infty}n^{-1}\sum_{x=0}^n \alpha(x)g[\eta(x)]=\lambda\label{ergo_flux}
\end{eqnarray}
\end{lemma}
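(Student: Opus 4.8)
The plan is to prove both assertions by showing that each of the spatial averages in question converges in $L^2(\mu_\lambda^\alpha)$ to its expectation, and that the expectation is the claimed limit. Since $\mu_\lambda^\alpha$ is a product measure, the summands $\eta(x)$ (resp.\ $\alpha(x)g[\eta(x)]$) are independent under $\mu_\lambda^\alpha$, with mean $R^\alpha(x,\lambda)=R(\lambda/\alpha(x))$ (resp.\ $\lambda$, by \eqref{mean_rate}). First I would treat \eqref{ergo_flux}: the summands $\alpha(x)g[\eta(x)]$ are bounded (by $1$, since $g_\infty=1$ and $\alpha(x)\le 1$), hence have uniformly bounded variance $\sigma^2(\lambda/\alpha(x))\le \sigma_{\max}^2<+\infty$ for $\lambda$ in a compact subinterval of $[0,c]$; actually for \eqref{ergo_flux} we need it up to $\lambda=c$, which is fine because $\lambda/\alpha(x)\le c/c=1$ is not reached—wait, $\alpha(x)>c$ strictly, so $\lambda/\alpha(x)<1$ and $\theta_{\lambda/\alpha(x)}$ is well defined with finite variance even at $\lambda=c$; one should check the variance stays bounded, which follows since $\lambda/\alpha(x)\le c/\alpha(x)$ and we can bound things using $\alpha(x)>c$. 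By independence, $\mathrm{Var}_{\mu_\lambda^\alpha}\big(n^{-1}\sum_{x=-n}^0 \alpha(x)g[\eta(x)]\big)\le (n+1)^{-1}\sigma_{\max}^2\to 0$, so the average converges in $L^2$ to $n^{-1}\sum_{x=-n}^0\lambda=\lambda$ (a deterministic constant equal to its own limit). The positive-side statement is identical.

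For \eqref{ergo_density}, the summands $\eta(x)$ are no longer bounded, so the argument needs one extra input: the variance $\sigma^2(\lambda/\alpha(x))$ of $\theta_{\lambda/\alpha(x)}$ must be shown to be uniformly bounded in $x$ for each fixed $\lambda\in[0,c)$. This holds because $\lambda/\alpha(x)\le \lambda/c=:\lambda'<1$, and $\sigma^2(\cdot)$, like $R(\cdot)$, is continuous (indeed $C^\infty$) on $[0,1)$, hence bounded on $[0,\lambda']$; this is why we must exclude $\lambda=c$ in \eqref{ergo_density}. Given this, the same variance estimate gives $L^2$-convergence of $n^{-1}\sum_{x=-n}^0\eta(x)$ to its mean $n^{-1}\sum_{x=-n}^0 R(\lambda/\alpha(x))$, which by \eqref{average_afgl} converges to $\overline{R}(\lambda)$; and convergence in $L^2$ of the centered quantity plus convergence of the deterministic means yields $L^2$-convergence of the average itself to $\overline{R}(\lambda)$. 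The positive side uses the positive half of \eqref{average_afgl}.

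The one genuine subtlety—the main obstacle, such as it is—is that $L^2$-convergence of $n^{-1}\sum \eta(x)$ to the deterministic limit $\overline{R}(\lambda)$ is being asserted, which is slightly stronger than $L^2$-convergence to the (random-in-$\alpha$, deterministic-in-$\eta$) Cesàro mean; but since that Cesàro mean $n^{-1}\sum_{x=-n}^0 R(\lambda/\alpha(x))$ is a sequence of numbers converging to $\overline R(\lambda)$ by assumption \eqref{average_afgl}, the triangle inequality in $L^2(\mu_\lambda^\alpha)$ closes the gap with no further work. I would also remark that $\overline R$ here is the extension to $[0,c]$ defined after \eqref{other_def_critical}, so that the statement at $\lambda=c$ in \eqref{ergo_flux} is consistent (the flux average there is $\lambda=c=(p-q)^{-1}f(\rho_c)$ up to the factor $p-q$, but here we are averaging $\alpha g$, not the current, so the value is simply $c$). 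No use of the full strength of \eqref{assumption_afgl} is needed for this lemma; only \eqref{average_afgl} and the product structure enter.
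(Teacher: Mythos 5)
Your proof is correct and follows the same approach as the paper's: compute the variance of the spatial average, use independence under the product measure $\mu_\lambda^\alpha$ together with the uniform boundedness of the single-site variance (trivial for $\alpha g$ since $g$ is bounded; from $\lambda/\alpha(x)\le\lambda/c<1$ for $\eta(x)$ when $\lambda<c$) to get an $O(1/n)$ bound, then pass to the limit using \eqref{mean_rate} for \eqref{ergo_flux} and \eqref{average_afgl} for \eqref{ergo_density}. The paper's proof is merely a more terse version of yours, and your observation that \eqref{assumption_afgl} plays no role here matches the fact that the paper's argument also never invokes it.
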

\begin{proof}{Lemma}{current_critical}
Since part of the following computations will be used later in a 
slightly different context, we begin in some generality by presenting them for a process
$(\zeta^\alpha_t)_{t\geq 0}$ starting from configuration $\zeta_0=\zeta$. 
When required, we shall specialize this computation to our equilibrium process 
 $\zeta_t=\xi^{\alpha,\lambda}_t$. 
We first note that  for $\zeta\in\mathbf{X}$ and $x,y\in\Z$ such that $x<y$,
 analogously to \eqref{current}, 
\be
\label{difference_courants}
\Gamma^\alpha_x(t,\zeta)-\Gamma^\alpha_y(t,\zeta) = \sum_{z=x+1}^y\zeta_t(z)-\sum_{z=x+1}^y\zeta(z)
\ee
 Given $L\in\N,L>1$,  we define a space-averaged current
\be\label{space_averaged_current}
\Gamma^{\alpha,L}_{x}(t,\zeta):=
L^{-1}\sum_{i=0}^{L-1}\Gamma^\alpha_{x+i}(t,\zeta)
\ee
Using \eqref{difference_courants}, we can write
\be\label{error_averaged_current}
\Gamma^\alpha_{x}(t,\zeta)=\Gamma^{\alpha,L}_{x}(t,\zeta)+\Delta^{\alpha,L}_x(t,\zeta)
\ee
where 
\be\label{error_averaged_current_2}
|\Delta^{\alpha,L}_{x}(t,\zeta)|\leq L^{-1}\sum_{i=1}^{L-1}
\left(\sum_{z=x+1}^{x+i}\zeta_t(z)+\sum_{z=x+1}^{x+i}\zeta(z)\right)
=:\widetilde{\Delta}^{\alpha,L}_{x}(t,\zeta)
\ee
 Next, we have by \eqref{current_harris} 
\begin{eqnarray}
t^{-1}\Gamma^{\alpha,L}_{x}(t,\zeta) & = & t^{-1}\int_0^t L^{-1}\sum_{i=0}^{L-1}
p\alpha(x+i)g[\zeta^\alpha_s(x+i)]ds\nonumber\\
& - & t^{-1}\int_0^t L^{-1}\sum_{i=0}^{L-1}q\alpha(x+i+1)g[\zeta^\alpha_s(x+i+1)]ds
+M^{\alpha,L}_{x}(t,\zeta)
\nonumber\\
& = & (p-q)G^{\alpha,L}_{x}(t,\zeta)
+M^{\alpha,L}_{x}(t,\zeta)+O(L^{-1})\label{block_current}
\end{eqnarray}
 where 
\be\label{def_block_g}
G^{\alpha,L}_x(t,\zeta):=t^{-1}\int_0^t L^{-1}\sum_{i=0}^{L-1}
\alpha(x+i)g[\zeta^\alpha_s(x+i)]
ds,
\ee
$O(L^{-1})$ denotes the product of $L^{-1}$ with a uniformly bounded quantity, 
and $M^{\alpha,L}_{x}(t,\zeta)$ is a martingale with quadratic variation
$$
\left<M^{\alpha,L}_{x}(t,\zeta)\right>=O\left(\frac{t}{L^3}\right)
$$
under $\Prob$ for any fixed $\zeta$ (and uniformly over all choices of $\zeta$).
{}From now on, we assume  $\zeta=\xi_0^{\alpha,\lambda}$, $\zeta_t=\xi_t^{\alpha,\lambda}$ and $x=x_t$.
In this case, 
using stationarity of  $\xi^{\alpha,\lambda}_.$,  we have 
\begin{eqnarray}
\Exp_0\Exp[ \widetilde{\Delta}_{x_t}^{\alpha,L}(t,\xi_0^{\alpha,\lambda})]
& = & 2\Exp_0\left\{
L^{-1}\sum_{i=1}^{L-1}\sum_{z=x_t+1}^{x_t+i}\xi^{\alpha,\lambda}_0(z)
\right\}\nonumber\\
& = & 2
L^{-1}\sum_{i=1}^{L-1}\sum_{z=x_t+1}^{x_t+i}R\left[\frac{\lambda}{\alpha(z)}\right]\leq 2L\,R\left(\frac{\lambda}{c}\right)
\label{using_stationarity}
\end{eqnarray}
Now, we take $L=\lfloor\varepsilon t\rfloor$. 
With this choice, by \eqref{using_stationarity},  
$$
\lim_{\varepsilon\to 0}\limsup_{t\to+\infty}\Exp_0\Exp
[t^{-1}\widetilde{\Delta}^{\alpha,\lfloor \varepsilon t\rfloor}_{x_t}(t,\xi_0^{\alpha,\lambda})]=0
$$
By  the triangle  inequality and stationarity of $\xi^{\alpha,\lambda}_.$,
\begin{eqnarray}
&&\Exp_0\Exp\left\vert
G^{\alpha,\lfloor\varepsilon t\rfloor}_x(t,\xi_0^{\alpha,\lambda})-\lambda
\right\vert \nonumber\\
&&\quad\leq  t^{-1}\int_0^t\Exp_0\Exp\left\{
\left\vert
(t\varepsilon)^{-1}\sum_{i=0}^{\lfloor t\varepsilon\rfloor-1}
\alpha(x_t+i)g[\xi^{\alpha,\lambda}_s(x_t+i)]-\lambda
\right\vert
\right\}ds\nonumber\\
&&\quad= 
\Exp_0\Exp\left\{
\left\vert
(t\varepsilon)^{-1}\sum_{i=0}^{\lfloor t\varepsilon\rfloor-1}
\alpha(x_t+i)g[\xi^{\alpha,\lambda}_0(x_t+i)]-\lambda
\right\vert
\right\} 
 \label{upperbound_G}
\end{eqnarray}
 Now, \eqref{ergo_flux} in  Lemma \ref{lemma_inv_meas} and \eqref{upperbound_G} 
imply that, for $\varepsilon>0$ such that $\beta<-\varepsilon$, 
\begin{eqnarray*}
\lim_{t\to+\infty}\Exp_0\Exp\left\vert 
G^{\alpha,\lfloor\varepsilon t\rfloor}_x(t,\xi_0^{\alpha,\lambda})-\lambda\right\vert & = & 0
\end{eqnarray*}
 which concludes the proof. 
\end{proof}
\subsection{Proof of Lemma \ref{lemma_total_mass}}\label{subsec_super}
  This proof is based on the analysis of currents done in 
Subsection  \ref{subsec_currents} (it uses Corollary \ref{corollary_current} 
and Lemma \ref{current_critical_2}, which itself required the first statement 
of Proposition \ref{th_strong_loc_eq}, \eqref{upperbound_current_source}) 
and on the second
statement of Proposition \ref{th_strong_loc_eq}, 
\eqref{hdl_source}. 
\\ \\
 To prepare the proof of Lemma \ref{lemma_total_mass}, we need to relate 
the supercritical mass to the loss of current. This is the object of the following lemma.
 For $\lambda\in[0,c)$, $\xi^{\alpha,\lambda}_0$ denotes a random variable 
 defined on $\Omega_0$ with distribution $\mu^\alpha_\lambda$.
For $t>0,b<0$  and $\varepsilon\in(0,c)$, let 
\begin{eqnarray*}
\delta_b(t,\eta_0,\xi_0^{\alpha,c-\varepsilon}) & := & t^{-1}
\sum_{x=\lfloor bt \rfloor+1}^{0} [\eta_0(x)-\xi_0^{\alpha,c-\varepsilon}(x)] \\
\Delta_b(t,\eta_0,\xi_0^{\alpha,c-\varepsilon}) & := & 
t^{-1}\sup_{\lfloor bt \rfloor<y\leq 0}\sum_{x=\lfloor bt \rfloor+1}^{y} 
[\eta_0(x)-\xi_0^{\alpha,c-\varepsilon}(x)]
\end{eqnarray*}
 In other words, by \eqref{def_cfd},
\be\label{autre-expres-delta}
\Delta_b(t,\eta_0,\xi_0^{\alpha,c-\varepsilon}) = t^{-1}\sup_{\lfloor bt \rfloor<y\leq 0}\left(
F_{\lfloor bt \rfloor}(y,\eta_0)-F_{\lfloor bt \rfloor}(y,\xi_0^{\alpha,c-\varepsilon})
\right)
\ee 
\begin{lemma}
\label{lemma_cdfs} 
 We have the $\Prob_0$-a.s. limits  \\ \\
(i) $\displaystyle\limsup_{t\to\infty}[\Delta_b(t,\eta_0,\xi_0^{\alpha,c-\varepsilon})
-\delta_b(t,\eta_0,\xi_0^{\alpha,c-\varepsilon})]\leq 0$
\\ \\
(ii) 
$\displaystyle\limsup_{t\to\infty}\Big\{
t^{-1}\sup_{x\in[\lfloor bt \rfloor-Wt,\lfloor bt \rfloor+1+Wt]}
\left[F_{\lfloor bt \rfloor}(x,\eta_0)-F_{\lfloor bt \rfloor}(x,\xi_0^{\alpha,c-\varepsilon})\right]$\\
\phantom{blablablablabla}$\displaystyle -\delta_b(t,\eta_0,\xi_0^{\alpha,c-\varepsilon})\Big\}
\leq 0$
\end{lemma}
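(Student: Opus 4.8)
The plan is to exploit the supercriticality hypothesis \eqref{cond_init} on $\eta_0$ together with the ergodic average \eqref{ergo_density} of Lemma \ref{lemma_inv_meas} applied to $\xi_0^{\alpha,c-\varepsilon}\sim\mu^\alpha_{c-\varepsilon}$. The key point is that both $F_{\lfloor bt\rfloor}(y,\eta_0)$ and $F_{\lfloor bt\rfloor}(y,\xi_0^{\alpha,c-\varepsilon})$, as functions of $y$, are (essentially) nondecreasing partial sums of nonnegative terms; the quantity whose supremum we must control is their difference, which need not be monotone, but whose running maximum can nonetheless be estimated. For part (i), I would first observe that, after dividing by $t$, the process $y\mapsto t^{-1}\sum_{x=\lfloor bt\rfloor+1}^{y}\xi_0^{\alpha,c-\varepsilon}(x)$ converges, uniformly for $y$ in the macroscopic window $[bt,0]$, to the deterministic linear profile $u\mapsto (c-\varepsilon)$-density times length, i.e. to $(u-b)\,\overline{R}(c-\varepsilon)$ at macroscopic position $u=y/t$; this uniform convergence is the standard strengthening of the pointwise ergodic limit \eqref{ergo_density} obtained from monotonicity in $y$ (a Dini/Pólya-type argument: a sequence of monotone functions converging pointwise to a continuous limit converges uniformly on compacts). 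Meanwhile $y\mapsto t^{-1}\sum_{x=\lfloor bt\rfloor+1}^{y}\eta_0(x)$ is nondecreasing in $y$, so its value at any $y\le 0$ is at most its value at $y=0$; combining these two facts, the supremum over $\lfloor bt\rfloor<y\le 0$ of the difference is asymptotically attained at the right endpoint $y=0$ (up to an error that vanishes because the $\xi$-part is uniformly close to a Lipschitz profile, hence its oscillation over any sub-window is controlled), which gives exactly $\Delta_b \le \delta_b + o(1)$, i.e. statement (i).

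For part (ii), the extra piece relative to (i) is that the supremum is now taken over the \emph{larger} window $[\lfloor bt\rfloor-Wt,\lfloor bt\rfloor+1+Wt]$ rather than $[\lfloor bt\rfloor+1,0]$. On the sub-window $[\lfloor bt\rfloor+1,0]$ the bound of part (i) already applies. It therefore suffices to show the contribution of the extra range is asymptotically nonpositive. For $x>0$ (the range $(0,\lfloor bt\rfloor+1+Wt]$), we use \eqref{empty_right}: $\eta_0(x)=0$ there, so $F_{\lfloor bt\rfloor}(x,\eta_0)=\sum_{y=\lfloor bt\rfloor+1}^{x}\eta_0(y)$ does not increase past $x=0$, whereas $F_{\lfloor bt\rfloor}(x,\xi_0^{\alpha,c-\varepsilon})$ keeps increasing linearly at rate $\overline{R}(c-\varepsilon)$; hence the difference is maximized (over this part) at $x=0$ and is dominated by the $y=0$ value already accounted for in $\delta_b$. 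For $x\le \lfloor bt\rfloor$ (the range $[\lfloor bt\rfloor-Wt,\lfloor bt\rfloor]$), $F_{\lfloor bt\rfloor}(x,\eta_0)=-\sum_{y=x}^{\lfloor bt\rfloor}\eta_0(y)\le 0$ while $-F_{\lfloor bt\rfloor}(x,\xi_0^{\alpha,c-\varepsilon})=\sum_{y=x}^{\lfloor bt\rfloor}\xi_0^{\alpha,c-\varepsilon}(y)\ge 0$, so $F_{\lfloor bt\rfloor}(x,\eta_0)-F_{\lfloor bt\rfloor}(x,\xi_0^{\alpha,c-\varepsilon})\le 0$ there, and this part contributes nothing to the supremum in the limit. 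Putting the three ranges together yields (ii).

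The main obstacle, and the step requiring the most care, is the uniform (rather than merely pointwise) control of the $\xi_0^{\alpha,c-\varepsilon}$ partial sums over a macroscopic window, together with the passage from the \emph{liminf} hypothesis \eqref{cond_init} on $\eta_0$ to a statement valid along all $t\to\infty$ for the \emph{supremum} over $y$. Here one must be careful that \eqref{cond_init} only bounds the $\eta_0$-mass on $[-n,0]$ from below (so the $\eta_0$-profile could be very irregular inside the window), but since in (i)--(ii) we only ever bound the difference \emph{from above} by $\delta_b$, and $\delta_b$ itself is the full-window difference, no lower regularity of $\eta_0$ is needed: monotonicity of $y\mapsto F_{\lfloor bt\rfloor}(y,\eta_0)$ on $y\le 0$ does all the work on the $\eta_0$ side, and the genuine analytic input is confined to the equilibrium configuration, where Lemma \ref{lemma_inv_meas} plus monotonicity in $y$ gives the needed uniformity. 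Once this uniform convergence is in hand, the rest is bookkeeping over the three ranges of $x$ described above.
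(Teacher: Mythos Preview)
Your argument for (i) has a genuine gap: monotonicity of $y\mapsto F_{\lfloor bt\rfloor}(y,\eta_0)$ does \emph{not} do all the work on the $\eta_0$ side, contrary to what you assert in the last paragraph. For $\lfloor bt\rfloor<y\le 0$, the quantity you must bound is
\[
t^{-1}\sum_{x=\lfloor bt\rfloor+1}^{y}[\eta_0(x)-\xi_0^{\alpha,c-\varepsilon}(x)]-\delta_b
\;=\;-\,t^{-1}\sum_{x=y+1}^{0}\eta_0(x)\;+\;t^{-1}\sum_{x=y+1}^{0}\xi_0^{\alpha,c-\varepsilon}(x).
\]
Your monotonicity observation yields only that the first term on the right is $\le 0$; the second term is $\approx(-y/t)\,\overline{R}(c-\varepsilon)$, which is strictly positive and of order one, so your bound gives $\Delta_b-\delta_b\lesssim |b|\,\overline{R}(c-\varepsilon)$, not $o(1)$. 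What is actually needed is the supercriticality hypothesis \eqref{cond_init}, applied at \emph{every} scale $|y|$: it forces $t^{-1}\sum_{x=y+1}^{0}\eta_0(x)\ge (-y/t)\rho_c-o(1)$ uniformly over $y$ in the window (via a subsequence argument on $y_t/t$, as the paper does), and since $\rho_c>\overline{R}(c-\varepsilon)$ this beats the $\xi$-term. Your proposed proof would apply verbatim to an $\eta_0$ violating \eqref{cond_init} on sub-intervals of $[\lfloor bt\rfloor,0]$, for which the lemma is false; hence the argument cannot be correct as written.

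The same error recurs in your treatment of the left range $x\le\lfloor bt\rfloor$ in (ii). From $F_{\lfloor bt\rfloor}(x,\eta_0)\le 0$ and $-F_{\lfloor bt\rfloor}(x,\xi_0^{\alpha,c-\varepsilon})\ge 0$ you conclude $F_{\lfloor bt\rfloor}(x,\eta_0)-F_{\lfloor bt\rfloor}(x,\xi_0^{\alpha,c-\varepsilon})\le 0$; but this is the sum of a nonpositive and a nonnegative number, with no definite sign. The paper instead writes this difference as $\delta_b-t^{-1}\sum_{y=x}^{0}[\eta_0(y)-\xi_0^{\alpha,c-\varepsilon}(y)]$ (see \eqref{ontheotherhand}) and again uses \eqref{cond_init} together with \eqref{ergo_density} to show the subtracted sum is asymptotically nonnegative. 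Your handling of the right range $x>0$ via \eqref{empty_right} is correct and matches the paper.
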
 
\begin{proof}{Lemma}{lemma_cdfs} 
\textit{(i)} Let $\lfloor bt \rfloor<y\leq 0$.
Then 
\begin{eqnarray}
\nonumber t^{-1}\sum_{x=\lfloor bt \rfloor+1}^{y} 
[\eta_0(x)-\xi_0^{\alpha,c-\varepsilon}(x)] 
& = & \delta_b(t,\eta_0,\xi_0^{\alpha,c-\varepsilon})\label{decomp}\\
& - & \nonumber
\left[
t^{-1}\sum_{x=y+1}^{0} \eta_0(x)+t^{-1}\rho_c y
\right]\indicator{\{y<0\}}\\
& + & \left[
t^{-1}\sum_{x=y+1}^{0} \xi_0^{\alpha,c-\varepsilon}(x)+t^{-1}\rho_c y
\right]\indicator{\{y<0\}}\nonumber\\
& =: & \delta_b(t,\eta_0,\xi_0^{\alpha,c-\varepsilon})
-T_1(t,y,\eta_0)+T_1(t,y,\xi_0^{\alpha,c-\varepsilon})\nonumber
\end{eqnarray}
For $\zeta\in\{\eta_0,\xi_0^{\alpha,c-\varepsilon}\}$, let
\begin{eqnarray}\label{second_term}
T_2(t,\zeta) & := & \sup_{\lfloor bt \rfloor<y\leq 0}T_1(t,y,\zeta)
\end{eqnarray}
 We claim that 
\begin{eqnarray}
\label{limit_term2}
\liminf_{t\to\infty} T_2(t,\eta_0) & \geq & 0\\
\label{limit_term3}
\lim_{t\to\infty}T_2(t,\xi_0^{\alpha,c-\varepsilon}) & \leq & 
0 \quad  \Prob_0-a.s.
\end{eqnarray} 
 Indeed, let $y_t$ denote a value of $y$ such that
$$T_1(t,y_t,\eta_0)=T_2(t,\eta_0)$$ 
(such a value exists because the supremum in \eqref{second_term} is over a finite set).
Assume $(t_n)_{n\in\N}$ is a positive sequence such that $\lim_{n\to\infty}t_n=+\infty$.  
Since $\lfloor bt \rfloor<y_t\leq 0$,  there exists a subsequence of $(t_n)_{n\in\N}$ along which
$y_t/t$ has a limit $y\in[b,0]$. If $y<0$, the limit \eqref{limit_term2} along this subsequence 
follows from supercriticality condition \eqref{cond_init} on the initial configuration $\eta_0$. 
If $y=0$, it follows because $T_1(t,y_t,\eta_0)$ is the sum of a nonnegative term and a vanishing term.  
A similar argument  combined with \eqref{ergo_density} of Lemma \ref{lemma_inv_meas}  
establishes \eqref{limit_term3}.  This implies \textit{(i)} since 
$\Delta_b(t,\eta_0,\xi_0^{\alpha,c-\varepsilon})\ge\delta_b(t,\eta_0,\xi_0^{\alpha,c-\varepsilon})$. \\ \\ 
For \textit{(ii)}, since $\eta_0(x)=0$ for $x>0$, we  have 
\be\label{first_half_cdf}
\ba{ll}
& \dsp t^{-1}\sup_{x\in[\lfloor bt \rfloor,\lfloor bt \rfloor+1+Wt]}
\left[F_{\lfloor bt \rfloor}(x,\eta_0)-F_{\lfloor bt \rfloor}
(x,\xi_0^{\alpha,c-\varepsilon})\right] \\ = & 
\dsp t^{-1}\sup_{x\in[\lfloor bt \rfloor,\min(0,\lfloor bt \rfloor+1+Wt)]}
\left[F_{\lfloor bt \rfloor}(x,\eta_0)-F_{\lfloor bt \rfloor}(x,\xi_0^{\alpha,c-\varepsilon})\right]\\
\leq & \dsp \Delta_b(t,\eta_0,\xi_0^{\alpha,c-\varepsilon})
\ea
\ee
 On the other hand, for  $x\in[\lfloor bt \rfloor-Wt,\lfloor bt \rfloor]$,  by \eqref{def_cfd}
\begin{eqnarray}
t^{-1}\left[F_{\lfloor bt \rfloor}(x,\eta_0)-F_{\lfloor bt \rfloor}(x,\xi_0^{\alpha,c-\varepsilon})\right]
& = &-t^{-1}\sum_{y={x}}^{\lfloor bt \rfloor} [\eta_0(y)-\xi_0^{\alpha,c-\varepsilon}(y)]
\nonumber\\ & = & -t^{-1}\sum_{y={x}}^{0} [\eta_0(y)-\xi_0^{\alpha,c-\varepsilon}(y)]\nonumber\\
& + &
\delta_b(t,\eta_0,\xi_0^{\alpha,c-\varepsilon})\label{ontheotherhand}
\end{eqnarray}
 The same argument as in the proof of \textit{(i)}, 
 using assumption \eqref{cond_init} and \eqref{ergo_density} 
 of Lemma \ref{lemma_inv_meas}, shows that $\Prob_0$-a.s.,
$$
\liminf_{t\to\infty}
\sup_{x\in[\lfloor bt \rfloor-Wt,\lfloor bt \rfloor]}
t^{-1}\sum_{y=x}^0[\eta_0(y)-\xi_0^{\alpha,c-\varepsilon}(y)]\geq 0
$$
Together with \eqref{autre-expres-delta},  \eqref{first_half_cdf},
\eqref{ontheotherhand}  and \emph{(i)}, this establishes the lemma.
\end{proof}
\mbox{}\\ \\
We are now ready for the  proof of Lemma \ref{lemma_total_mass}.\\
\begin{proof}{Lemma}{lemma_total_mass}
\mbox{}\\ \\
{\em Proof of \eqref{left_inside}.}
We start from,  using \eqref{difference_courants} for $t$ large enough, 
\begin{eqnarray}
t^{-1}\sum_{x=1+\lfloor bt \rfloor}^{A_\varepsilon(\alpha)}\eta_t^\alpha(x)
 & = & t^{-1}\sum_{x=1+\lfloor bt \rfloor}^{A_\varepsilon(\alpha)}
 \eta_0(x)+t^{-1}\Gamma_{\lfloor bt \rfloor}^\alpha(t,\eta_0)
 -t^{-1}\Gamma_{A_\varepsilon(\alpha)}^\alpha(t,\eta_0)\nonumber\\
 & = & t^{-1}\sum_{x=1+\lfloor bt \rfloor}^{0}[\eta_0(x)-\xi^{\alpha,c-\varepsilon}_0(x)]
 +t^{-1}\sum_{x=1+\lfloor bt \rfloor}^{0}\xi^{\alpha,c-\varepsilon}_0(x)\nonumber\\
& + & t^{-1}\Gamma_{\lfloor bt \rfloor}^\alpha(t,\eta_0)
-t^{-1}\Gamma_{A_\varepsilon(\alpha)}^\alpha(t,\eta_0)\nonumber\\
& - & t^{-1}\sum_{x=1+A_\varepsilon(\alpha)}^0\eta_0(x)\label{insert_remove}
\end{eqnarray}
For $i\in\{2,3,4\}$, we denote by $S_i(t)$ the quantity on the 
$i$-th line of the r.h.s. of \eqref{insert_remove}. Since the sum 
in the deterministic quantity $S_4(t)$ does not depend on $t$, we have 
\be\label{fourth_line}
\lim_{t\to+\infty}S_4(t)=0
\ee
The first term in $S_2(t)$ is  $\delta_b(t,\eta_0,\xi_0^{\alpha,c-\varepsilon})$. 
Using \eqref{ergo_density} in  Lemma \ref{lemma_inv_meas} for the second term, we have
\be\label{first_line}
\lim_{t\to+\infty}\Exp_0\Exp\left|
S_2(t)+b\overline{R}(c-\varepsilon)-\delta_b(t,\eta_0,\xi_0^{\alpha,c-\varepsilon})
\right|= 0
\ee
Next, from Corollary \ref{corollary_current}, Lemma \ref{current_critical} 
and  Lemma \ref{lemma_cdfs}\textit{(ii)},  we obtain 
\begin{eqnarray}
\lim_{t\to+\infty} 
\Exp_0\Exp 
\left\{t^{-1}\Gamma^\alpha_{\lfloor bt \rfloor}(t,\eta_0)
-(p-q)(c-\varepsilon)+\delta_b(t,\eta_0,\xi_0^{\alpha,c-\varepsilon})^+\right\}^-=0 
 \label{incoming}
\end{eqnarray}
 In what follows, we make repeated use of the subadditivity of functions $x\mapsto x^\pm$.
Writing 
\begin{eqnarray*}
S_3(t)+\delta_b(t,\eta_0,\xi_0^{\alpha,c-\varepsilon})^+ &
= &
\left[
t^{-1}\Gamma^\alpha_{\lfloor bt \rfloor}(t,\eta_0)
-(p-q)(c-\varepsilon)+\delta_b(t,\eta_0,\xi_0^{\alpha,c-\varepsilon})^+
\right]\\
& + & \left[
(p-q)c - t^{-1}\Gamma_{A_\varepsilon(\alpha)}^\alpha(t,\eta_0)\right]-(p-q)\varepsilon
\end{eqnarray*}
we obtain
\begin{eqnarray*}
\left[S_3(t)+\delta_b(t,\eta_0,\xi_0^{\alpha,c-\varepsilon})^+\right]^- &
\leq &
\left[
t^{-1}\Gamma^\alpha_{\lfloor bt \rfloor}(t,\eta_0)
-(p-q)(c-\varepsilon)+\delta_b(t,\eta_0,\xi_0^{\alpha,c-\varepsilon})^+
\right]^-\\
& + & \left[
(p-q)c - t^{-1}\Gamma_{A_\varepsilon(\alpha)}^\alpha(t,\eta_0)\right]^-
+ (p-q)\varepsilon
\end{eqnarray*}
and using \eqref{incoming} with Lemma \ref{current_critical_2}, we obtain that 
\be\label{limit_diff_cur}
\limsup_{t\to+\infty}\Exp_0\Exp\left\{
\left[S_3(t)+\delta_b(t,\eta_0,\xi_0^{\alpha,c-\varepsilon})^+\right]^-
\right\}\leq 2\varepsilon
\ee
To conclude we use the decomposition 
\begin{eqnarray*}
S_2(t)+S_3(t)+S_4(t)+b\rho_c & = & \left[
S_2(t)+b\overline{R}(c-\varepsilon)-\delta_b(t,\eta_0,\xi^{\alpha,c-\varepsilon}_0)
\right]\\
& + & \left[S_3(t)+\delta_b(t,\eta_0,\xi^{\alpha,c-\varepsilon}_0)^+\right]+S_4(t)\\
& +  & b[\rho_c-\overline{R}(c-\varepsilon)]-\delta_b(t,\eta_0,\xi^{\alpha,c-\varepsilon}_0)^-
\end{eqnarray*}
which yields the inequality
\begin{eqnarray*}
\left[S_2(t)+S_3(t)+S_4(t)+b\rho_c\right]^- &  \leq & \left[
S_2(t)+b\overline{R}(c-\varepsilon)-\delta_b(t,\eta_0,\xi^{\alpha,c-\varepsilon}_0)
\right]^-\\
& + & \left[S_3(t)+\delta_b(t,\eta_0,\xi^{\alpha,c-\varepsilon}_0)^+\right]^--S_4(t)\\
&  & -b[\rho_c-\overline{R}(c-\varepsilon)]+\delta_b(t,\eta_0,\xi^{\alpha,c-\varepsilon}_0)^-
\end{eqnarray*}
Observe that 
\eqref{limit_term3} 
implies
\be\label{vanish_deltab}
\lim_{t\to+\infty}\delta_b(t,\eta_0,\xi_0^{\alpha,c-\varepsilon})^-=0,\quad\Prob_0\mbox{-a.s.}
\ee
Then, using \eqref{fourth_line}, \eqref{first_line} and \eqref{limit_diff_cur},  
\eqref{left_inside} follows with $\delta_t:=\delta_b(t,\eta_0,\xi_0^{\alpha,c-\varepsilon})^-$.\\ \\
{\em Proof of \eqref{upper_bound_for_xi}.}
By  \eqref{lagrangian} (which implies that $f^*$ is nonincreasing) and \eqref{convex_anal},
$$
f^*(0)-f^*(-b)\leq f^*(0)-f^*(-\beta)=\int_{0}^{-\beta}{\mathcal R}(z)dz
$$
Thus, by equation \eqref{hdl_source} of Proposition \ref{th_strong_loc_eq},
\be
\lim_{t\to\infty}\left[
t^{-1}\sum_{x=\lfloor b t \rfloor}^{0}\eta_t^{\alpha,t}(x)-
\int_{0}^{-\beta}{\mathcal R}(z)dz\right]^+=0\label{source_process}
\ee
in $\Prob_0\otimes\Prob$-probability. 
We can choose  $\beta=\beta_\kappa<b=b_\kappa<-v_0$ so that,
for $\kappa>0$ small enough, 
\be\label{source_process_2}
\int_{0}^{-\beta}{\mathcal R}(z)dz<-b\rho_c-\kappa
\ee
Indeed,  since $\mathcal R$ is equal to $\rho_c$ on  $[0,v_0]$,  the difference
 between the l.h.s. and the r.h.s. of \eqref{source_process_2} can be written
\be\label{difference_integrals}
-\int_{v_0}^{-b}\left[
\rho_c-\mathcal R(x)
\right]dx+
\int_{-b}^{-\beta}
\mathcal R(x)dx+\kappa
\ee
Since the second integrand above is bounded by $\rho_c$, 
one may for instance choose $b-\beta=\varepsilon$ and then
(since $\mathcal R$ is nonincreasing)
\be\label{such_a_choice}
b=-\inf\left\{
y>v_0:\,\int_{v_0}^y \left[
\rho_c-\mathcal R(x)
\right]dx>(1+\rho_c)\kappa
\right\}
\ee
which  implies
$\lim_{\kappa\to 0}b_\kappa=-v_0$. 
\end{proof}
\section{Proof of Proposition \ref{th_strong_loc_eq}}\label{sec_loc_eq}
 We start with the  proof of \eqref{upperbound_current_source}.
\subsection{Proof of \eqref{upperbound_current_source}.}
 To prove \eqref{upperbound_current_source}, we use 
Corollary \ref{corollary_consequence} to compare currents, 
Lemma \ref{current_critical} for equilibrium current,
and variations on the proof of Theorem \ref{proposition_afgl} in Section \ref{sec_afgl}.  \\ \\ 
 Let $\varepsilon\in(0,c)$. We  couple $(\eta_s^{\alpha,t})_{s\geq 0}$ 
 with the stationary process  $(\xi^{\alpha,c-\varepsilon}_t)_{t\geq 0}$, 
 where $\xi^{\alpha,c-\varepsilon}_0\sim\mu^\alpha_{c-\varepsilon}$.
By Corollary  \ref{corollary_consequence} (with $y=z=x_t$ and $\zeta=\xi_0^{\alpha,c}$)
\be\label{compare_bfl}
t^{-1}\Gamma^\alpha_{x_t}\left(t,\eta_0^{\alpha,t}\right)\geq
t^{-1}\Gamma^\alpha_{x_t}\left(t,\xi_0^{\alpha,c-\varepsilon}\right)
\ee
Set
$$S_t:=t^{-1}\Gamma^\alpha_{x_t}\left(t,\eta_0^{\alpha,t}\right)-(p-q)c,$$
Applying  Lemma \ref{current_critical} to the r.h.s. of \eqref{compare_bfl} 
and letting $\varepsilon\to 0$ yields 
\be\label{lowerbound_current_source}
\lim_{t\to+\infty}\Exp\left[
S_t^-
\right]=0
\ee
Next we are going to prove that
\be\label{upperbound_exp_current_source}
\limsup_{t\to+\infty}\Exp\left[
t^{-1}\Gamma^\alpha_{x_t}\left(t,\eta_0^{\alpha,t}\right)-p\alpha(x_t)+qc
\right]\leq 0
\ee
Suppose this is established. 
Then 
by \eqref{upperbound_exp_current_source}, $\limsup_{t\to+\infty}\Exp \{S_t- p[\alpha(x_t)-c]\}\leq 0$.
Since $|S_t|=S_t+2S_t^-$, and \eqref{lowerbound_current_source} holds, 
\eqref{upperbound_current_source} follows.\\ \\
We now prove \eqref{upperbound_exp_current_source}. Let $\delta>0$, 
and denote by $\tilde{\alpha}$ the modified environment
that coincides with $\alpha$ at all sites except at $x_t$, where we set 
$$
\tilde{\alpha}(x_t):=
c-\delta<c<\alpha(x_t)
$$
for $\delta\in(0,c)$.
We couple our source process
$(\eta^{\alpha,t}_s)_{s\geq 0}$ 
with the source process $(\eta^{\tilde{\alpha},t}_s)_{s\geq 0}$ which has a source at 
the same location $x_t$ but environment $\tilde{\alpha}$.
Proposition \ref{monotone_harris} implies that $\eta_s^{\tilde{\alpha},t}\leq\eta_s^{\alpha,t}$ 
for every $s\geq 0$. 
The difference
$$
\sum_{x>x_t}\eta^{\alpha,t}_s(x)-\sum_{x>x_t}\eta^{\tilde{\alpha},t}_s(x)
$$
is not modified by jumps in the bulk. It increases when a particle 
is created at $x_t+1$ for the  original process but not for the modified one.
Note that a particle cannot be removed from the bulk only in the modified process,
because $\eta^{\tilde{\alpha},t}_s(x_t+1) \leq\eta^{\alpha,t}_s(x_t+1)$,
and $g$ is nondecreasing. Thus, in the notation
of \eqref{def_omega}, we have 
\be\label{diff_currents_source}
\sum_{x>x_t}\eta^{\alpha,t}_s(x)-\sum_{x>x_t}\eta^{\tilde{\alpha},t}_s(x)
\leq \omega((0,s]\times\{x_t\}\times(\tilde{\alpha}(x_t),\alpha(x_t)]\times\{1\})
\ee 
As a function of $s$, the r.h.s. of \eqref{diff_currents_source} is a Poisson process in time with intensity
$$
\alpha(x_t)-\tilde{\alpha}(x_t)=
\alpha(x_t)-c+\delta
$$
Hence,
\be\label{excess_mean_current}
\Exp\left[ t^{-1}\Gamma^\alpha_{x_t}(t,\eta_0^{\alpha,t})\right]
\leq \Exp \left[t^{-1}\Gamma^{\tilde{\alpha}}_{x_t}(t,\eta_0^{\tilde{\alpha},t})\right]
+p[\alpha(x_t)-c+\delta]
\ee
We will now show that
\be\label{first_bound_exp_source}
\limsup_{t\to+\infty}\Exp\left[
t^{-1}\Gamma^{\tilde{\alpha}}_{x_t}\left(t,\eta_0^{\tilde{\alpha},t}\right)
\right]\leq (p-q)c
\ee
This, combined with \eqref{excess_mean_current} and $\delta\to 0$ after $t\to+\infty$, 
implies \eqref{upperbound_exp_current_source}.
To prove \eqref{first_bound_exp_source}, 
we use Proposition \ref{prop_jackson} with
$l=x_t$ and $S=\Z\cap(-\infty,l]$.  In this case, the constant function 
$\tilde{\lambda}$ with value $\tilde{\alpha}(l)=c-\delta$ is a solution of
\eqref{jackson_bulk}--\eqref{jackson_boundary}. Since $\tilde{\lambda}(x)<c<\tilde\alpha(x)$ for all $x>l$, 
the measure $\mu^{\tilde{\alpha},S,\tilde{\lambda}(.)}=:\tilde{\mu}$, 
that is the product measure with constant parameter $\tilde{\alpha}(l)=c-\delta$ on 
$\N^{\Z\cap[l+1,+\infty)}$,  is invariant for $L^{\tilde{\alpha},S}$. 
We introduce a stationary process $\xi^{\tilde{\alpha},t}_.$ with generator
$L^{\tilde{\alpha},S}$. We can couple this process to $\eta^{\tilde{\alpha},t}_.$ so that
\be\label{dominating_process}
\eta_s^{\tilde{\alpha},t}(x)\leq\xi_s^{\tilde{\alpha},t}(x),\quad\mbox{for all }s\geq 0\mbox{ and }x>x_t
\ee
We now apply the spatial averaging procedure introduced for the current 
in the proof of Lemma \ref{current_critical}. Recall the quantities
defined in \eqref{space_averaged_current}--\eqref{def_block_g}.
Writing \eqref{block_current} with $x=x_t$  and $\tilde\alpha'$ yields 
\be\label{first_bound_block}
\Exp [t^{-1} \Gamma^{\tilde\alpha,L}_{x_t}(t,\eta_0^{\tilde{\alpha},t})]
=(p-q)\Exp G_{x_t}^{\tilde{\alpha},L}(t,\eta^{\tilde{\alpha},t}_0)+
\Exp [t^{-1} \widetilde{\Delta}_{x_t}^{\tilde\alpha}(t,\eta_0^{\tilde\alpha,t})]+O(L^{-1})
\ee 
Since $G^{\alpha,L}_x(t,\zeta)$ is a nondecreasing (random) function of $\zeta$  and $\alpha$, this implies 
\be\label{bound_block_g}
G^{\tilde{\alpha},L}_{x_t}(t,\eta_0^{\tilde{\alpha},t})\leq
G^{\tilde{\alpha},L}_{x_t}(t,\xi_0^{\tilde{\alpha},t})
\ee
 Since $\xi^{\tilde{\alpha},t}_0\sim\tilde{\mu}$, it follows from \eqref{def_block_g}  that 
$$
\Exp G^{\tilde{\alpha},L}_{x_t}(t,\xi_0^{\tilde{\alpha},t})=c-\delta< c
$$
By Proposition \ref{monotone_harris}, the quantity 
 $\widetilde{\Delta}_x^{\tilde\alpha}(t,\zeta)$ defined in  
\eqref{error_averaged_current_2}  is a nondecreasing function of $\zeta$. 
Hence, by \eqref{dominating_process}, 
\be\label{compare_deltas}
\Exp\widetilde{\Delta}_{x_t}^{\tilde\alpha}(t,\eta^{\tilde{\alpha},t}_0)\leq
\Exp\widetilde{\Delta}_{x_t}^{\tilde\alpha}(t,\xi^{\tilde{\alpha},t}_0)\leq 2L R\left(\frac{c-\delta}{c}\right)
\ee 
where the last equality follows from the same computation as in \eqref{using_stationarity}.
Now we choose $L=L(t)$ in such a way that $L\to+\infty$ and $L/t\to 0$ as $t\to+\infty$.
 Plugging \eqref{compare_deltas} into \eqref{first_bound_block} 
and letting $t\to+\infty$, we obtain \eqref{first_bound_exp_source}.
\subsection{Proof of \eqref{hdl_source}}\label{subsec:proof_hdl}
The proof relies on the microscopic interface property stated in 
Lemma \ref{lemma_interface}.  More precisely, to prove
 \eqref{hdl_source}, we use \eqref{upperbound_current_source} 
 and derive an intermediate result, Proposition \ref{prop_lim_current}. 
 For the latter, we use again results on currents (Lemmas \ref{lemma_current}, 
 \ref{current_critical} and ergodic properties of Lemma \ref{lemma_inv_meas}), 
 and the interface property. 
For 
$\lambda\in[0,c]$, 
we consider the stationary processes  
$\{\xi ^{\alpha,\lambda}_s\}_{s \geq 0}$,
run by the given Harris system, with
initially $\xi ^{\alpha,\lambda}_0\sim\mu^\alpha_\lambda$.
We simultaneously construct   these 
random configurations 
by inversion for all values of $\lambda$.\\ \\
Let us denote by $F_\lambda$ the c.d.f. of the probability measure $\theta_\lambda$ 
defined in \eqref{eq:theta-lambda}, i.e.
$F_\lambda(t):=\theta_\lambda((-\infty,t])$ for every $t\in\R$, and by 
$F_\lambda^{-1}$ the generalized inverse of $F_\lambda$, defined 
as in \eqref{inverse_minus}--\eqref{inverse_plus}.
Let $(V^x)_{x\in\Z}$ be a family of i.i.d. random variables independent of the Harris system, 
such that for every $x\in\Z$, $V^x$ 
is uniformly distributed on $(0,1)$. Then we set 
\be\label{def_inversion}
\xi^{\alpha,\lambda}_0(x):=F^{-1}_{\frac{\lambda}{\alpha(x)}}(V^x)
\ee
It follows from \eqref{def_inversion} 
that if $\lambda\leq\tilde{\lambda}$, then
$\xi^{\alpha,\lambda}_0\leq \xi^{\alpha,\tilde{\lambda}}_0$ a.s. Finally, if for $\Lambda\in [0,c]$ we set  
\be\label{def_riemann_inversion}
\eta^{\alpha,t,\Lambda}_0(x)=\xi_0^{\alpha,\Lambda}(x)\indicator{\{x\leq x_t\}},
\ee
  we have by construction that
\be\label{compare_critstep}
\eta^{\alpha,t}_0\geq\eta^{\alpha,t,\Lambda}_0
\ee
 Further, 
if $\lambda\in[0,c]$, then
$\eta^{\alpha,t}_0(x)\geq\xi^{\alpha,\lambda}_0(x)$ for  $x\leq x_t$, and 
$\eta^{\alpha,t}_0(x)\leq\xi^{\alpha,\lambda}_0(x)$ for $x>x_t$. We may
 therefore consider the interface process $(x_s^{\alpha,\lambda,t})_{s\geq 0}$
given by Lemma \ref{lemma_interface}, such that 
\begin{eqnarray}
\eta^{\alpha,t}_s(x)\geq \xi^{\alpha,\lambda}_s(x) 
& \mbox{if} & x\leq x^{\alpha,\lambda,t}_s\label{special_interface_1}\\
\eta^{\alpha,t}_s(x)\leq \xi^{\alpha,\lambda}_s(x) 
& \mbox{if} & x> x^{\alpha,\lambda,t}_s\label{special_interface_2}
\end{eqnarray}
 Note that $x^{\alpha,\lambda,t}_s> x_t$, because
 $\eta^{\alpha,t}_s(x)=+\infty$ for $x\leq x_t$. Besides, since 
 $\xi_s^{\alpha,\lambda}(x)$  is a nondecreasing function of $\lambda$, 
$x^{\alpha,\lambda,t}_s(x)$  is a nonincreasing function of $\lambda$.\\ \\
The first step towards proving \eqref{hdl_source} is  the following proposition.
\begin{proposition}\label{prop_lim_current}
For every $\Lambda\in [0,c)$ and $v\in(0,-\beta]$, it holds that
\be\label{eq:lim_current}
\lim_{t\to+\infty}\Exp_0\Exp\left|
t^{-1}\sum_{x>\lfloor x_t+vt\rfloor}\eta_t^{\alpha,t,\Lambda}(x)-f^*(v,\Lambda)
\right|=0
\ee
where,  similarly to \eqref{lagrangian}, 
\be\label{retricted_legendre}
f^*(v,\Lambda):=
\sup_{\rho\in[0,\overline{R}(\Lambda)]}[f(\rho)-v\rho]
=\sup_{\lambda\in[0,\Lambda]}[(p-q)\lambda-v\overline{R}(\lambda)]
\ee
\end{proposition}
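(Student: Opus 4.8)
The plan is to prove Proposition~\ref{prop_lim_current} by combining a current identity with the microscopic interface property of Lemma~\ref{lemma_interface} and the equilibrium current computation of Lemma~\ref{current_critical}. Fix $\Lambda\in[0,c)$ and $v\in(0,-\beta]$. First, I would express the quantity to be estimated via currents: since $\eta_0^{\alpha,t,\Lambda}(x)=\xi_0^{\alpha,\Lambda}(x)\indicator{\{x\le x_t\}}$ has finitely many particles to the right of any site, by \eqref{current} (applied with the constant path at $\lfloor x_t+vt\rfloor$) we have
\[
\sum_{x>\lfloor x_t+vt\rfloor}\eta_t^{\alpha,t,\Lambda}(x)
=\Gamma^\alpha_{\lfloor x_t+vt\rfloor}(t,\eta_0^{\alpha,t,\Lambda})+\sum_{x>\lfloor x_t+vt\rfloor}\eta_0^{\alpha,t,\Lambda}(x),
\]
and the last sum is $O(1)$ when $v>0$, hence negligible after dividing by $t$. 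So the task reduces to showing $t^{-1}\Gamma^\alpha_{\lfloor x_t+vt\rfloor}(t,\eta_0^{\alpha,t,\Lambda})\to f^*(v,\Lambda)$ in $L^1(\Prob_0\otimes\Prob)$.

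For the \emph{lower} bound I would use the interface process. For each $\lambda\in[0,\Lambda]$, by \eqref{special_interface_1}--\eqref{special_interface_2} the configurations $\eta^{\alpha,t,\Lambda}_s$ and $\xi^{\alpha,\lambda}_s$ are ordered on either side of $x^{\alpha,\lambda,t}_s$. When the interface $x^{\alpha,\lambda,t}_t$ lies to the left of $\lfloor x_t+vt\rfloor$ (a probable event when $v$ exceeds the front speed associated with $\lambda$, i.e.\ $v>(p-q)(c-\lambda)/(\overline R(c)-\overline R(\lambda))$ roughly, but we only need the macroscopic comparison of masses), the $\eta^{\alpha,t,\Lambda}$-mass to the right of $\lfloor x_t+vt\rfloor$ dominates the $\xi^{\alpha,\lambda}$-mass there; combined with Lemma~\ref{current_critical} (which gives $t^{-1}\Gamma^\alpha_{x_t}(t,\xi_0^{\alpha,\lambda})\to(p-q)\lambda$) and the ergodic averages \eqref{ergo_density} of Lemma~\ref{lemma_inv_meas}, one recovers, for the relevant $\lambda$, a lower bound $(p-q)\lambda-v\overline R(\lambda)$ on the limiting value. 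Taking the supremum over $\lambda\in[0,\Lambda]$ yields $\liminf\ge f^*(v,\Lambda)$. The subtlety is controlling the interface position: I would estimate the expected displacement of $x^{\alpha,\lambda,t}_s$ using that its jump rates are bounded (via the Harris construction and boundedness of $g$), so that $t^{-1}x^{\alpha,\lambda,t}_t$ stays bounded and, via a current-balance argument across the interface, has the right asymptotic velocity; this is where I expect the main work to lie.

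For the \emph{upper} bound I would use monotonicity in the source data. By \eqref{compare_critstep}, $\eta^{\alpha,t,\Lambda}_0\le\eta^{\alpha,t}_0$ with equality to the left of $x_t$; more to the point, I would compare $\eta^{\alpha,t,\Lambda}$ with a stationary Jackson-network process. Taking $S=\Z\cap(-\infty,x_t]$ and, as in the proof of \eqref{upperbound_current_source}, a product-invariant measure $\tilde\mu$ with a constant parameter $\lambda'$ slightly above $\Lambda$ (so that the source at $x_t$, which emits at rate in $(c-\delta,1]$, is dominated in the relevant sense), Proposition~\ref{monotone_jackson} gives a coupling with $\eta^{\alpha,t,\Lambda}_s(x)\le\xi^{\alpha,t}_s(x)$ for $x>x_t$. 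Applying the spatial-averaging/martingale decomposition \eqref{block_current}--\eqref{def_block_g} to the current across $\lfloor x_t+vt\rfloor$ for the stationary dominating process, and using that under $\tilde\mu$ the block average $G^{\tilde\alpha,L}$ has mean the constant source parameter, while the difference terms $\widetilde\Delta$ are $O(L)$ by the stationarity computation \eqref{using_stationarity}, I would obtain $\limsup t^{-1}\Gamma^\alpha_{\lfloor x_t+vt\rfloor}(t,\eta_0^{\alpha,t,\Lambda})\le(p-q)\lambda'-v\,(\text{corresponding density})$; optimizing over the parameter and letting $\delta\to0$ gives $\limsup\le f^*(v,\Lambda)$. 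Matching the two bounds and upgrading from convergence in probability to $L^1$ (using uniform integrability from the stationary domination, since $\xi^{\alpha,t}$ has marginals with bounded mean) completes the proof.
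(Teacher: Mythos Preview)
Your proposal has genuine gaps in both halves.

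\textbf{Lower bound.} Your interface argument has the inequality reversed: if the interface $x^{\alpha,\lambda,t}_t$ lies to the \emph{left} of $\lfloor x_t+vt\rfloor$, then by \eqref{special_interface_1}--\eqref{special_interface_2} we have $\eta^{\alpha,t,\Lambda}_t\le\xi^{\alpha,\lambda}_t$ to the right of the interface, which gives an upper (not lower) bound on the mass. More importantly, your plan to ``control the interface position'' is unnecessary. The paper obtains the lower bound in one line from Lemma~\ref{lemma_current}: since $\eta^{\alpha,t,\Lambda}_0\ge\xi^{\alpha,\lambda}_0$ on $(-\infty,x_t]$ and $\eta^{\alpha,t,\Lambda}_0=0\le\xi^{\alpha,\lambda}_0$ on $(x_t,\infty)$, one checks that $\sup_x[F_{x_0}(x,\eta^{\alpha,t,\Lambda}_0)-F_{x_0}(x,\xi^{\alpha,\lambda}_0)]\le 0$, so $\Gamma^\alpha_{y^t_.}(t,\eta^{\alpha,t,\Lambda}_0)\ge\Gamma^\alpha_{y^t_.}(t,\xi^{\alpha,\lambda}_0)$ for the moving path $y^t_s=\lfloor x_t+vs\rfloor$. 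The right-hand side is then evaluated via Lemma~\ref{current_critical} and \eqref{ergo_density}, giving $(p-q)\lambda-v\overline R(\lambda)$ without any interface localization.

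\textbf{Upper bound.} Dominating $\eta^{\alpha,t,\Lambda}$ by a \emph{single} stationary process at some parameter $\lambda'\ge\Lambda$ cannot yield $f^*(v,\Lambda)$. First, the mass of a stationary process to the right of any site is infinite, so configuration domination gives nothing for the mass. Second, even if you work with currents across the moving path, the stationary current at parameter $\lambda'$ is $(p-q)\lambda'-v\overline R(\lambda')$, a single term outside the range $[0,\Lambda]$, not the supremum over $[0,\Lambda]$; there is no way to ``optimize over the parameter'' while maintaining the domination. The paper's upper bound is where the real work is, and it uses the interface property in an essential way: one discretizes $[0,\Lambda]$ by $\lambda_k=k\Lambda/n$, so that away from the (at most $n$) interfaces, $\eta^{\alpha,t,\Lambda}_s$ is sandwiched between two consecutive equilibria $\xi^{\alpha,\lambda_k}_s\le\eta^{\alpha,t,\Lambda}_s\le\xi^{\alpha,\lambda_{k+1}}_s$ on each block. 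One then writes the mass as a time integral of a block-averaged functional $\widetilde G_L$ (the $(p-q)\alpha g$ term minus $v$ times the density), bounds $\widetilde G_{l,j}(s,\eta^{\alpha,t,\Lambda}_s)$ on good blocks by $\max_k[\widetilde H_{l,j}(s,\xi^{\alpha,\lambda_{k+1}}_s)-\widetilde K_{l,j}(s,\xi^{\alpha,\lambda_k}_s)]$, and finally uses stationarity and \eqref{ergo_density}--\eqref{ergo_flux} to show each such term is close to $(p-q)\lambda_{k+1}-v\overline R(\lambda_k)\le f^*(v,\Lambda)+1/n$. The scales $l=\varepsilon t$, $L=ml$, $n\ll m$ are sent to their limits in the order $t\to\infty$, $\varepsilon\to 0$, $m\to\infty$, $n\to\infty$.
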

\begin{proof}{Proposition}{prop_lim_current}
We divide it into a lower bound and an upper bound.\\ \\
{\em Step one.} We prove that, for every $\lambda\in[0,\Lambda]$,
\be\label{stoch_ineq}
\lim_{t\to+\infty}\Exp_0\Exp\left[
t^{-1}\sum_{x>\lfloor x_t+vt\rfloor}\eta_t^{\alpha,t,\Lambda}(x)-(p-q)\lambda+v\overline{R}(\lambda)
\right]^-=0
\ee
 For $s,t\geq 0$, we set
$y_s^t=\lfloor x_t+vs\rfloor$  (where $t$ plays the role of a scaling parameter,
 and $s$ is the actual time variable).
By Lemma \ref{lemma_current}  and \eqref{current}  we have,  for $\lambda\in[0,\Lambda]$, 
\begin{eqnarray}\label{compare_currents_interface}
t^{-1}\sum_{x>\lfloor x_t+vt\rfloor}\eta_t^{\alpha,t,\Lambda}(x)
&=&t^{-1}\Gamma_{y^t_.}(t,\eta^{\alpha,t,\Lambda}_0) \geq  t^{-1}\Gamma_{y^t_.}(t,\xi^{\alpha,\lambda}_0)\\
t^{-1}\Gamma_{y^t_.}(t,\xi^{\alpha,\lambda}_0)
 &=&  t^{-1}\Gamma_{x_t}(t,\xi^{\alpha,\lambda}_0)-t^{-1}\sum_{x=1+x_t}^{y_t^t}\xi_t^{\alpha,\lambda}(x)
\label{decomp_currents_interface}
\end{eqnarray}
 By Lemma \ref{current_critical}, the first term on the r.h.s. of
 \eqref{decomp_currents_interface} converges a.s. to the mean current $(p-q)\lambda$. 
On the other hand, by   \eqref{ergo_density} of  
Lemma \ref{lemma_inv_meas} and stationarity of $\xi^{\alpha,\lambda}_.$, the second term
converges in distribution to $-v\overline{R}(\lambda)$. \\ \\
{\em Step two.}
Let $S_t$ denote the quantity between brackets in \eqref{stoch_ineq}, 
where $\lambda$ is chosen so as to achieve  
$\sup_{\lambda\in[0,\Lambda]}[(p-q)\lambda-v\overline{R}(\lambda)]$  
(which is possible by continuity of $\overline{R}$). 
Since $\vert S_t\vert=2S_t+S_t^-$, to complete the proof of the  proposition,   it is enough to show that 
$$\limsup_{t\to+\infty}\Exp_0\Exp S_t\leq 0,$$
that is,
\be\label{stoch_ineq_2}
\limsup_{t\to+\infty}\Exp_0\Exp\left\{
t^{-1}\sum_{x>\lfloor x_t+vt\rfloor}\eta_t^{\alpha,t,\Lambda}(x)\right\}
\leq f^*(v,\Lambda)
\ee
To this end, it is enough to prove that
\be\label{stoch_ineq_22}
\limsup_{t\to+\infty}\Exp_1\Exp_0\Exp\left\{
t^{-1}\sum_{x>z_t^t}\eta_t^{\alpha,t,\Lambda}(x)\right\}
\leq f^*(v,\Lambda)
\ee
where $(z^t_s)_{s\geq 0}$ is a rate $v$ Poisson process starting from 
$z^t_0:=x_t$, independent of $\eta^{\alpha,t}_.$, and $\Exp_1$ denotes 
expectation with respect to this Poisson process. Indeed, the error between
 the left-hand sides of \eqref{stoch_ineq_2} and \eqref{stoch_ineq_22} is bounded by
$R(\Lambda/c)t^{-1}\Exp_1|y^t_t-z^t_t|$, which vanishes as $t\to+\infty$ by
 the law of large numbers for the Poisson process.\\ \\
To establish \eqref{stoch_ineq_22}, for $l=\varepsilon t$, $m\in\N\setminus\{0\}$ and $L=ml$, 
we consider spatial blocks of length 
$l$, 
$$
B_{l,j}(s)=[z^t_s-L+1+jl,z^t_s-L+1+(j+1)l)\cap\Z,
$$ 
for $j=0,\ldots,m-1$.  
We observe that 
\be\label{observe_eq_loc}
t^{-1}\sum_{x>z^t_t}\eta_t^{\alpha,t,\Lambda}(x)
\leq
 F_L(t,\eta_t^{\alpha,t,\Lambda})
\ee
 where
$$
F_L(s,\eta):=t^{-1}L^{-1}\sum_{i=0}^{L-1}
\sum_{x>z^t_s+i-L+1}\eta(x)
$$
For $s\in[0,t]$, the number of particles to the right of $z_s^t+i-L+1$
 can be modified either by a particle jump from or to this position, or by 
the motion of the Poisson process. Thus
\begin{eqnarray}
\Exp_1\Exp_0\Exp F_L(t,\eta_t^{\alpha,t,\Lambda}) & = & t^{-1}\int_0^t 
\Exp_1\Exp_0\Exp G_L(s,\eta_s^{\alpha,t,\Lambda})ds\nonumber\\
\label{wecanwrite_loc_eq} & +  & \Exp_0\Exp  F_L(0,\eta_0^{\alpha,t,\Lambda})
\end{eqnarray}
where the first term on the r.h.s. is the contribution of particle jumps 
and the next one is the contribution of self-motion, with 
\begin{eqnarray*}
G_L(s,\eta) & := & L^{-1}\sum_{i=0}^{L-1}p\alpha(z^t_s+i-L+1)g[\eta(z^t_s+i-L+1)]\\
& - & L^{-1}\sum_{i=0}^{L-1} q\alpha(z^t_s+i-L+2)g[\eta(z^t_s+i-L+2)]\\
& - & L^{-1}\sum_{i=0}^{L-1} v\eta(z^t_s+i-L+2)
\end{eqnarray*} 
 By   \eqref{ergo_density} of Lemma \ref{lemma_inv_meas}, 
\be\label{small_piece}
\Exp_0\Exp F_L(0,\eta_0^{\alpha,t,\Lambda})
\leq t^{-1}\Exp_0\Exp\sum_{x>x_t-L+1}\eta^{\alpha,t,\Lambda}_0(x)=O(L/t)
\ee 
Since $g$ is bounded, with an error bounded uniformly by a constant
 times $l/L=m^{-1}$, we can replace $G_L(s,\eta)$ by
\be\label{tildeg_loc_eq}
\widetilde{G}_L(s,\eta) := m^{-1}\sum_{j=0}^{m-1}\widetilde{G}_{l,j}(s,\eta)
\ee
where 
\be\label{tildeg_loc_eq_2}
\widetilde{G}_{l,j}(s,\eta):=\widetilde{H}_{l,j}(s,\eta)-\widetilde{K}_{l,j}(s,\eta)
\ee
with 
\be\label{tildeg_loc_eq_22}
\widetilde{H}_{l,j}(s,\eta):=l^{-1}\sum_{x\in B_{l,j}(s)}
(p-q)\alpha(x)g(\eta(x)),\quad
\widetilde{K}_{l,j}(s,\eta):=l^{-1}\sum_{x\in B_{l,j}(s)}
v\eta(x+1)
\ee 
The sequel of the proof develops the following idea. If instead of $\eta^{\alpha,t,\Lambda}_.$, 
we had one of the equilibrium processes $\xi^{\alpha,\lambda}_.$, 
by stationarity, recalling \eqref{mean_density_quenched}, \eqref{average_afgl} and \eqref{mean_rate},
the  expectation of $\widetilde{G}_{l,j}(s,\xi^{\alpha,\lambda}_s)$ (for large $l$) would be close to
 $(p-q)\lambda-v\overline{R}(\lambda)\leq f^*(v,\Lambda)$.
We will show that in some sense, locally, $\eta^{\alpha,t,\Lambda}_.$ is close to $\xi^{\alpha,\lambda}_.$ 
for some random $\lambda$. To this end we use the interface property (Lemma \ref{lemma_interface})
 and a large finite set of values of $\lambda$,
setting $\lambda_k=k\Lambda/n$ for $k=0,\ldots,n$. The process $\eta^{\alpha,t,\Lambda}_.$ has
 one interface with each equilibrium process $\xi^{\alpha,\lambda_k}_.$.
Between two successive interfaces, $\eta^{\alpha,t,\Lambda}_.$ must lie between two consecutive
 equilibrium processes, and thus be close to either one  if $n$ is large.
Besides, if $n\ll L$, this will be true essentially everywhere in our window of size $L$. Eventually,
 limits will be carried out in the following order: $t\to +\infty$, $\varepsilon\to 0$, 
 $m\to+\infty$ and $n\to+\infty$.\\ \\
We now proceed to details of the above idea.
Let $J(s)$ denote the (random) set of indexes $j\in\{0,\ldots,m-1\}$ 
such that the block  $B_{l,j}(s)$  contains none of the  interfaces 
$x^{\alpha,\lambda_k,t}_s$ for $k=0,\ldots,n$. 
Note that $|J(s)|\geq m-n$. If $j\in J(s)$, 
there exists a  random $k=k(s,j)\in\{0,\ldots,n-1\}$ such that   
\be\label{sandwich_boxes}
\xi^{\alpha,\lambda_k}_s (x)  \leq  \eta^{\alpha,t,\Lambda}_s(x)\leq\xi^{\alpha,\lambda_{k+1}}_s (x),\quad
\mbox{for all  }x\in B_{l,j}(s)
\ee 
Note that \eqref{sandwich_boxes} is true even if all interfaces 
lie to the right of $B_{l,j}(s)$. Indeed, in this case, 
$\eta^{\alpha,t,\Lambda}_s(x)\geq\xi_0^{\alpha,\Lambda}(x)$ for all $x\in B_{l,j}(s)$. 
But since (cf. \eqref{def_riemann_inversion}) $\eta_0^{\alpha,t,\Lambda}\leq\xi_0^{\alpha,\Lambda}$, 
by Proposition \ref{monotone_harris}, $\eta_s^{\alpha,t,\Lambda}\leq\xi_s^{\alpha,\Lambda}$. 
Thus, $\eta^{\alpha,t,\Lambda}_s(x)=\xi_s^{\alpha,\Lambda}(x)$ for all $x\in B_{l,j}(s)$.\\ 
Since $\widetilde{H}_{l,j}(s,\eta)$ and $\widetilde{K}_{l,j}(s,\eta)$ are increasing 
functions of $\eta$, for $j\in J(s)$, \eqref{sandwich_boxes} implies 
\begin{eqnarray*}
\widetilde{G}_{l,j}\left(s,\eta^{\alpha,t,\Lambda}_s\right) 
& \leq & \widetilde{H}_{l,j}\left(s,\xi^{\alpha,\lambda_{k(s,j)+1}}_s\right)-
\widetilde{K}_{l,j}\left(s,\xi^{\alpha,\lambda_{k(s,j)}}_s\right)\\
& \leq & \max_{k=0,\ldots,n}\left\{
\widetilde{H}_{l,j}\left(s,\xi^{\alpha,\lambda_{k+1}}_s\right)-
\widetilde{K}_{l,j}\left(s,\xi^{\alpha,\lambda_{k}}_s\right)
\right\}
\end{eqnarray*}
On the other hand, since $\alpha(.)\leq 1$ and $g(.)\leq 1$, we have 
the rough bound $\widetilde{G}_{l,j}(s,\eta)\leq 1$ for any $\eta\in\mathbf{X}$  and $0\le j\le m-1$. 
Thus
\be\label{thusthus}
\widetilde{G}_{L}\left(s,\eta^{\alpha,t,\Lambda}_s  \right)
\leq \widetilde{\mathcal G}_L\left(s,\eta^{\alpha,t,\Lambda}_s\right)
+\frac{n}{m}
\ee
where 
$$
\widetilde{\mathcal G}_L\left(s,\eta^{\alpha,t,\Lambda}_s\right)
:=m^{-1}\sum_{j\in\{0,\ldots,m-1\}\cap J(s)}\max_{k=0,\ldots,n}\left\{
\widetilde{H}_{l,j}\left(s,\xi^{\alpha,\lambda_{k+1}}_s\right)-
\widetilde{K}_{l,j}\left(s,\xi^{\alpha,\lambda_{k}}_s\right)
\right\}
$$ 
So far, we have reduced the problem to proving that (recall $n/m\to 0$)
\be\label{reduced_to}
\limsup_{t\to+\infty}
t^{-1}\int_0^t\Exp_1\Exp_0\Exp\left\{
 \widetilde{\mathcal G}_L(s,\eta^{\alpha,t,\Lambda}_s)-f^*(v,\Lambda)
\right\}ds \leq 0
\ee
Since 
$$
f^*(v,\Lambda)\geq (p-q)\lambda_{k}-v\overline{R}(\lambda_k)
\geq (p-q)\lambda_{k+1}-v\overline{R}(\lambda_k)-\frac{1}{n}
$$
for every $k\in\{0,\ldots,n\}$, an upper bound for 
$\widetilde{\mathcal G}_L(s,\eta^{\alpha,t,\Lambda}_s)-f^*(v,\Lambda)$ is 
\begin{eqnarray*}
 & &  \frac{1}{n}+m^{-1}\sum_{j\in\{0,\ldots,m-1\}\cap J(s)}\max_{k=0,\ldots,n}\left\{
\widetilde{H}_{l,j}\left(s,\xi^{\alpha,\lambda_{k+1}}_s\right)
-\widetilde{K}_{l,j}\left(s,\xi^{\alpha,\lambda_{k}}_s\right)\right.\\
&&
\left.\phantom{\widetilde{K}_{l,j}\left(s,\xi^{\alpha,\lambda_{k}}_s\right)}-\left[
(p-q)\lambda_{k+1}-v\overline{R}(\lambda_k)
\right]
\right\}\\
& \leq & \frac{1}{n}+m^{-1}\sum_{j=0}^{m-1}
\sum_{k=0}^n\left|
\widetilde{H}_{l,j}\left(s,\xi^{\alpha,\lambda_{k+1}}_s\right)
-\widetilde{K}_{l,j}\left(s,\xi^{\alpha,\lambda_{k}}_s\right)\right.\\
&&
\left.\phantom{\widetilde{K}_{l,j}\left(s,\xi^{\alpha,\lambda_{k}}_s\right)}-\left[
(p-q)\lambda_{k+1}-v\overline{R}(\lambda_k)
\right]
\right|\\
& \leq & \frac{1}{n}+m^{-1}\sum_{j=0}^{m-1}
\sum_{k=0}^n\left|
\widetilde{H}_{l,j}\left(s,\xi^{\alpha,\lambda_{k+1}}_s\right)-(p-q)\lambda_{k+1}\right|\\
& + & \phantom{\frac{1}{n}+}
m^{-1}\sum_{j=0}^{m-1}
\sum_{k=0}^n\left|
\widetilde{K}_{l,j}\left(s,\xi^{\alpha,\lambda_{k}}_s\right)-\overline{R}(\lambda_k)\right| 
\end{eqnarray*}
 In the last expression, the various equilibrium processes are decoupled,
 so for each $k$ we may use the stationarity of the
corresponding equilibrium process to compute its expectation.
 Therefore, to establish \eqref{reduced_to},
it is enough to prove that, for every $\varepsilon>0$, $m\in\N\setminus\{0\}$,
 $n\in\N\setminus\{0\}$, $j\in\{0,\ldots,m-1\}$, $k\in\{0,\ldots,n\}$, 
\begin{eqnarray}\label{block_equilibrium_1}
\lim_{t\to +\infty}t^{-1}\int_0^t\Exp_1\Exp_0\Exp\left|
\widetilde{H}_{l,j}\left(s,\xi^{\alpha,\lambda_{k+1}}_0\right)-(p-q)\lambda_{k+1}\right|ds & = & 0 \\
\label{block_equilibrium_2}
\lim_{t\to +\infty}t^{-1}\int_0^t\Exp_1\Exp_0\Exp\left|
\widetilde{K}_{l,j}\left(s,\xi^{\alpha,\lambda_{k}}_0\right)-\overline{R}(\lambda_k)\right|ds & = & 0
\end{eqnarray} 
where $l=\varepsilon t$.  To prove \eqref{block_equilibrium_1},
we make the change of variable $s=tu$  in \eqref{block_equilibrium_1}, which yields  the integral
$$
\int_0^1\Exp_1\Exp_0\Exp\left|
\widetilde{H}_{l,j}\left(tu,\xi^{\alpha,\lambda_{k+1}}_0\right)-(p-q)\lambda_{k+1}\right|du
$$ 
By  \eqref{ergo_flux} of  Lemma \ref{lemma_inv_meas} 
and the fact that $\lim_{t\to+\infty}t^{-1}z^t_{tu}=\beta+vu$ a.s.
 with respect to the law of the Poisson process, we obtain 
 that the integrand vanishes  for every $u\in[0,1]$  as
 $t\to+\infty$, and the result follows from dominated convergence (domination holds because the
 $\Exp_0\Exp$ expectations of all variables $\alpha(x)g[\xi_0^{\alpha,\lambda_{k+1}}(x)]$
 involved in the above integral are all equal to $\lambda_{k+1}$). 
The proof of \eqref{block_equilibrium_2} is similar to that of 
\eqref{block_equilibrium_1}, except that we use \eqref{ergo_density} instead of \eqref{ergo_flux}.
\end{proof}\\ \\
{\em Proof of \eqref{hdl_source}.} Since $\eta^{\alpha,t}_0\geq\eta^{\alpha,t,\Lambda}_0$,  
by attractiveness,
$$
t^{-1}\sum_{x>x_t+\lfloor vt\rfloor}\eta^{\alpha,t}_t(x)
\geq t^{-1}\sum_{x>x_t+\lfloor vt\rfloor}\eta^{\alpha,t,\Lambda}_t(x)
$$
We apply Proposition \ref{prop_lim_current} to the r.h.s. of the above 
inequality and let $\Lambda\uparrow c$. Since 
$\lim_{\Lambda\uparrow c}f^*(v,\Lambda)=f^*(v)$, this yields
the lower bound 
\be\label{lowerbound_hdl_source}
\lim_{t\to\infty}\Exp
\left[
t^{-1}\sum_{x>x_t+\lfloor vt\rfloor}\eta^{\alpha,t}_t(x)-f^*(v)\right]^-
= 0
\ee
To obtain the upper bound
\be
\label{upperbound_hdl_source}
\lim_{t\to\infty}\Exp
\left[
t^{-1}\sum_{x>x_t+\lfloor vt\rfloor}\eta^{\alpha,t}_t(x)-f^*(v)\right]^+
= 0,
\ee 
we couple $\eta^{\alpha,t}_.$ with the process $\tilde{\eta}^{\alpha,t}_.$, 
whose source is located at site
$\tilde{x}_t:=x_t+a_\varepsilon(\tau_{x_t}\alpha)$  (for $a_\varepsilon$ 
defined in \eqref{def:err}),  and with the process $\tilde{\eta}^{\alpha,t,\Lambda}_.$ 
starting from initial configuration
$\tilde{\eta}^{\alpha,t,\Lambda}_0$ defined as in \eqref{def_riemann_inversion} 
(but replacing $x_t$ by $\tilde{x}_t$).
Let $0<w<v$. 
 Notice that assumption \eqref{assumption_afgl} (or equivalently \eqref{assumption_afgl_2}) implies
$$
\lim_{n\to +\infty}n^{-1}a_\varepsilon(\tau_{-n}\alpha)=0
$$
Hence,  for large enough $t$, $\tilde{x}_t+\lfloor wt\rfloor<x_t+\lfloor vt\rfloor$.
 Since $\eta^{\alpha,t}_0\leq\tilde{\eta}^{\alpha,t}_0$, by attractiveness,
\be\label{attractiveness_upper}
t^{-1}\sum_{x>x_t+\lfloor vt\rfloor}\eta^{\alpha,t}_t(x)
\leq t^{-1}\sum_{x>x_t+\lfloor vt\rfloor}\tilde{\eta}^{\alpha,t}_t(x)
\leq t^{-1}\sum_{x>\tilde{x}_t+\lfloor wt\rfloor}\tilde{\eta}^{\alpha,t}_t(x)
\ee
Let $\Lambda\in[0,c)$. Since $\tilde{\eta}^{\alpha,t}_t\geq\tilde{\eta}_t^{\alpha,t,\Lambda}$,
\begin{eqnarray}
t^{-1}\sum_{x>\tilde{x}_t+\lfloor wt\rfloor}\tilde{\eta}^{\alpha,t}_t(x)
 & \leq & t^{-1}\sum_{x>\tilde{x}_t+\lfloor wt\rfloor}\tilde{\eta}^{\alpha,t,\Lambda}_t(x)
\nonumber\\
& + & t^{-1}\sum_{x>\tilde{x}_t}\tilde{\eta}^{\alpha,t}_t(x)
-t^{-1}\sum_{x>\tilde{x}_t}\tilde{\eta}^{\alpha,t,\Lambda}_t(x)\label{source_lambda}
\end{eqnarray}
For the first  term on the r.h.s. of \eqref{source_lambda}, we use 
Proposition \ref{prop_lim_current}. For the second one we use \eqref{upperbound_current_source}. 
For the third one, we can write
$$
t^{-1}\sum_{x>\tilde{x}_t}\tilde{\eta}^{\alpha,t,\Lambda}_t(x)\geq
t^{-1}\sum_{x>\tilde{x}_t+\lfloor ut\rfloor}\tilde{\eta}^{\alpha,t,\Lambda}_t(x)
$$
for an arbitrarily small $u>0$, and apply again Proposition \ref{prop_lim_current} to the above lower bound.
It follows that
$$
\limsup_{t\to+\infty}\Exp\left\{
t^{-1}\sum_{x>\tilde{x}_t+\lfloor wt\rfloor}\tilde{\eta}^{\alpha,t}_t(x)
-\left[
f^*(w,\Lambda)+(p-q)c-f^*(u,\Lambda)
\right]
\right\}^+
\leq \varepsilon
$$
We then let $\varepsilon\to 0$ and $w\uparrow v$, $u\downarrow 0$ and $\Lambda\uparrow c$,
 so that   $f^*(w,\Lambda)-f^*(u,\Lambda)\to f^*(v)-(p-q)c$,  which establishes
\eqref{upperbound_hdl_source}.
\subsection{Proof of \eqref{the_first_one}} \label{subsec:proof_loc_eq}
 The proof of \eqref{the_first_one} relies on 
 \eqref{hdl_source}, Lemma \ref{lemma_entropy} and the technical Lemma \ref{lemma_equicontinuous}. 
Let 
$$
H(x,\lambda):=\int_\mathbf{X}h(\tau_x\eta)d\mu^\alpha_\lambda(\eta)
$$
In Appendix \ref{app_lemmas}, we prove the following. 
\begin{lemma}\label{lemma_equicontinuous}
The family of functions $\{H(x,.):\,x\in\Z\}$  is equicontinuous on any interval $[0,\Lambda]$ with $\Lambda<c$.
\end{lemma}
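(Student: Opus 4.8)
The plan is to reduce the statement, using only that $h$ is local and bounded, to a \emph{uniform} total-variation continuity statement for the one-site marginals $\mu\mapsto\theta_\mu$ on a fixed compact subinterval of $[0,1)$; the uniformity in $x$ will come entirely from the structural bound $\alpha(\cdot)>c$, which forces every relevant fugacity to stay bounded away from $1$ by the fixed amount $1-\Lambda/c$.

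First I would fix a finite set $F\subset\Z$ such that $h(\eta)$ depends only on $(\eta(z))_{z\in F}$, and set $M:=\sup|h|$. Since $\mu_\lambda^\alpha$ is the product measure with one-site marginal $\theta_{\lambda/\alpha(\cdot)}$ (cf.\ \eqref{eq:theta-lambda} and the line above \eqref{stoch_inc}), the coordinates $(\eta(x+z))_{z\in F}$ are independent under $\mu_\lambda^\alpha$, with $\eta(x+z)\sim\theta_{\lambda/\alpha(x+z)}$, so that
$$
H(x,\lambda)=\int_{\N^F} h\big((n_z)_{z\in F}\big)\,\prod_{z\in F}\theta_{\lambda/\alpha(x+z)}(n_z).
$$
Writing $\|\nu-\nu'\|:=\sum_n|\nu(n)-\nu'(n)|$, one has $|\int h\,d\nu-\int h\,d\nu'|\le M\|\nu-\nu'\|$, and total variation is subadditive with respect to products, $\|\bigotimes_{z}\nu_z-\bigotimes_{z}\nu'_z\|\le\sum_{z}\|\nu_z-\nu'_z\|$. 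Hence
$$
|H(x,\lambda)-H(x,\lambda')|\le M\sum_{z\in F}\big\|\theta_{\lambda/\alpha(x+z)}-\theta_{\lambda'/\alpha(x+z)}\big\|.
$$

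The key step is the following uniform estimate: for any $\mu_0<1$, the map $\mu\mapsto\theta_\mu$ is uniformly continuous on $[0,\mu_0]$ for the distance $\|\cdot\|$. Indeed, each $\mu\mapsto\theta_\mu(n)$ is continuous, and for $\mu\in[0,\mu_0]$ one has $\theta_\mu(n)=Z(\mu)^{-1}\mu^n/g(n)!\le\mu_0^n/g(n)!$ because $Z(\mu)\ge Z(0)=1$; since $\sum_n\mu_0^n/g(n)!=Z(\mu_0)<+\infty$ by \eqref{def_Z}, dominated convergence (Scheffé) gives continuity of $\mu\mapsto\theta_\mu$ in $\|\cdot\|$ on $[0,\mu_0]$, whence uniform continuity by compactness; let $\omega_{\mu_0}$ denote a modulus of continuity. (Alternatively one differentiates term by term: the series $\sum_n n\mu^{n-1}/g(n)!=Z'(\mu)$ converges on $[0,\mu_0]$ since the radius of convergence of $Z$ is at least $1>\mu_0$, which yields a Lipschitz bound, i.e.\ one may take $\omega_{\mu_0}(\delta)=L_0(\mu_0)\,\delta$.)

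To conclude, set $\mu_0:=\Lambda/c<1$. For every $x\in\Z$, $z\in F$ and $\lambda,\lambda'\in[0,\Lambda]$, the numbers $\lambda/\alpha(x+z)$ and $\lambda'/\alpha(x+z)$ both lie in $[0,\mu_0]$, and $|\lambda/\alpha(x+z)-\lambda'/\alpha(x+z)|=|\lambda-\lambda'|/\alpha(x+z)\le|\lambda-\lambda'|/c$. Combining with the two displays above,
$$
|H(x,\lambda)-H(x,\lambda')|\le M\,|F|\,\omega_{\mu_0}\!\big(|\lambda-\lambda'|/c\big),
$$
uniformly in $x\in\Z$ and $\lambda,\lambda'\in[0,\Lambda]$, which is precisely the asserted (uniform) equicontinuity. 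I do not expect a genuine obstacle here; the only points requiring care are identifying the source of the $x$-uniformity (the fixed gap $1-\mu_0>0$ between every fugacity $\lambda/\alpha(y)$ and $1$) and supplying the domination needed to upgrade pointwise continuity of $\mu\mapsto\theta_\mu(n)$ to continuity in total variation.
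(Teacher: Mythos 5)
Your proposal is correct and complete. It takes a genuinely different route from the paper's proof, though both hinge on the same structural observation: since $\alpha(\cdot)>c$, every fugacity $\lambda/\alpha(y)$ with $\lambda\leq\Lambda$ lies in the fixed compact $[0,\Lambda/c]\subset[0,1)$, uniformly in $y$.

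The paper's proof writes $H(x,\lambda)=H_1(x,\lambda)/H_2(x,\lambda)$ as a ratio of power series in $\lambda$ (the numerator coming from the unnormalized product weights, the denominator from the product of normalizations $\prod_z Z(\lambda/\alpha(x+z))$), observes that the coefficients and hence the derivative $\partial H/\partial\lambda$ are bounded by power series in $\lambda/c$ with $x$-independent coefficients, and concludes via a uniform Lipschitz bound on $\Z\times[0,\Lambda]$. You instead decompose at the level of measures: you use the product structure of $\mu_\lambda^\alpha$, the subadditivity of the $\ell^1$-distance under products, and the fact that $\mu\mapsto\theta_\mu$ is uniformly continuous from $[0,\mu_0]$ to $\ell^1(\N)$ by domination/Scheff\'e (with $\mu_0=\Lambda/c$), then push through the Lipschitz factor $1/c$ from $\lambda\mapsto\lambda/\alpha(x+z)$. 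Your main route only uses pointwise continuity of $\mu\mapsto\theta_\mu(n)$ plus an $\ell^1$ envelope, so it is slightly more robust and transparent, at the cost of yielding an abstract modulus $\omega_{\mu_0}$ rather than an explicit Lipschitz constant; your parenthetical alternative (differentiating the series $Z$ term by term, valid since the radius of convergence of $Z$ is $\lim_n g(n)!^{-1/n}=1>\mu_0$) recovers a Lipschitz bound and is, in spirit, essentially the paper's argument. Both proofs are sound; the only small point worth double-checking in your write-up is the telescoping inequality $\norm{\bigotimes_z\nu_z-\bigotimes_z\nu_z'}\le\sum_z\norm{\nu_z-\nu_z'}$, which you invoke correctly but might spell out once.
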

By  Lemma \ref{lemma_entropy}{\it (ii)}, for $v>v_0$ we have $\lambda^-(v)<c$.
Thus, thanks to Lemma \ref{lemma_equicontinuous}, to prove \eqref{the_first_one}, 
it is enough to prove that, for every
$\lambda<\lambda^-(v)$,
\be\label{the_first_one_ter}
\liminf_{t\rightarrow \infty}  \left\{\Exp_0\Exp h\left(
\tau_{\lfloor( \beta+v) t \rfloor}\eta^{\alpha,t}_{t}
\right)- \int_\mathbf{X}h(\eta)d\mu^{\tau_{[(\beta+v)t]}\alpha}_{\lambda}(\eta)\right\}\geq 0
\ee
 Let $l\in\N$ such that $h(\eta)$ depends only on  $\{\eta(x):\,x\in\{-l,\ldots,l\}\}$. 
 Let $\lambda<\lambda^-(v)$, or equivalently $\rho<\mathcal R(v)$, where $\rho:=\overline{R}(\lambda)$.
By \eqref{convex_anal},  this is also equivalent to 
$$
v<\hat{f}'(\rho)=\frac{p-q}{\widehat{\overline{R}}'(\lambda)}=:v_\lambda
$$
where $\widehat{\overline{R}}$ denotes the convex envelope of $\overline{R}$, 
and the above equality follows from  \eqref{def_flux}.
Let $w\in (v,v_\lambda)$.
We claim that
\be\label{interface_inside}
\lim_{t\to+\infty}\Prob_0\otimes\Prob\left(
\left\{
x_t^{\alpha,\lambda,t}\geq\lfloor (\beta+w)t \rfloor
\right\}
\right)=1
\ee
Indeed, if \eqref{interface_inside} were not true, there would exist a constant  $C>0$
and a sequence $t_n\to +\infty$ such that 
\be\label{interface_to_left}
\Prob_0\otimes\Prob\left(
\left\{
x_{t_n}^{\alpha,\lambda,t_n}<\lfloor (\beta+w)t_n \rfloor
\right\}
\right)>C
\ee
 for all $n\in\N$.
{}From this we can derive a contradiction. Indeed, the event in \eqref{interface_to_left} implies that 
$$
t_n^{-1}\sum_{x=\lfloor (\beta+w) t_n \rfloor}^{\lfloor (\beta+v_\lambda) t_n\rfloor}
[\eta^{\alpha,t_n}_{t_n}(x)-\xi^{\alpha,\lambda}_{t_n}(x)]\leq 0
$$
But by \eqref{hdl_source} of Proposition \ref{th_strong_loc_eq} and \eqref{convex_anal},  
the above expression converges in probability to
$$
\int_{w}^{v_\lambda}[\mathcal R(u)-\rho]du>0
$$
The event in \eqref{interface_inside} implies that, for $t$ large enough, 
the whole interval $[\lfloor vt\rfloor -l,\lfloor vt\rfloor +l]$
lies to the left of $x^{\alpha,\lambda,t}_t$, hence  $\eta_t^{\alpha,t}$ 
dominates $\xi_t^{\alpha,\lambda}$ on this interval. Thus \eqref{interface_inside} implies
\be\label{whole_interval}
\lim_{t\to+\infty}
\Prob_0\otimes\Prob\left(
h(\eta_t^{\alpha,t})\geq h(\xi^{\alpha,\lambda}_t)
\right)=1
\ee
 Since $h$ is bounded, and $\xi^{\alpha,\lambda}_t\sim\mu^\alpha_\lambda$, 
 \eqref{whole_interval} implies \eqref{the_first_one_ter}.
\begin{appendix}
\section{Proofs of Lemmas}
\label{app_lemmas}
\begin{proof}{Lemma}{lemma_interface}
We will define a process $(x_s)_{s \geq 0} $ (with given initial point $x_0$) having the desired 
properties and in addition is piecewise constant, jumping at time $s$ only 
(necessarily a nearest neighbour jump) when 
either  \\
\indent
$\bullet\, s \in T^{x_{s-}}_.$ 
or \\
\indent
$\bullet\, s \in T^{z}_.$  
for some $|x_{s-}-z| = 1$ and the associated potential jump is to site $x_{s-}$. \\

If at such times we can always give a choice of $x_{s}$ preserving 
the required relations, then we are done.

We first observe that if the jump time $s$ results in no particles moving 
then we can keep $x_.$ constant.  Equally if $s$
entails the movement together of particles from processes $\zeta^\alpha_.$ 
and $\varpi^\alpha_.$, then again we can maintain the value of $x_. $ at time $s$. 
 We address the remaining cases: \\
\indent
$\bullet\, s \in T^{x_{s-}}_.$:  
 in this case we are concerned with the motion of a  $\varpi^\alpha_.$ particle
 and no motion of a $\zeta^\alpha_.$ particle. This implies that
 $\varpi^\alpha_{s-} (x_{s-}) > \zeta^\alpha_{s-} (x_{s-})$. In consequence we automatically have that
$$
\forall y \leq x_{s-}  \ \ \varpi^\alpha_{s-} (y) \geq \zeta^\alpha_{s-} (y )  \mbox{ and }
\forall y > x_{s-}+1  \ \ \varpi^\alpha_{s-} (y) \leq \zeta^\alpha_{s-} (y ).
$$
Therefore in this case we take $x_{s}= x_{s-} $ unless 
$\varpi^\alpha_{s} ( x_{s-}+1)> \zeta^\alpha_{s} ( x_{s-}+1) $,
in which case we put $ x_{s}= x_{s-}+1 $. \\
\indent
 $\bullet\, s \in T^{x_{s-}+1}_.$:  
 in this case we are concerned with the motion of a $ \zeta^\alpha_.$ particle 
 and no motion of a $\varpi^\alpha_.$ particle. This implies that 
 $\zeta^\alpha_{s-} (x_{s-}+1) > \varpi^\alpha_{s-} (x_{s-}+1)$. In consequence we automatically have that 
$$
\forall y < x_{s-}  \ \ \varpi^\alpha_{s-} (y) \geq \zeta^\alpha_{s-} (y )  \mbox{ and }
\forall y \geq x_{s-}+1  \ \ \varpi^\alpha_{s-} (y) \leq \zeta^\alpha_{s-} (y ).
$$
So we take $x_{s}= x_{s-} $ if $\varpi^\alpha_{s} ( x_{s-})\geq \zeta^\alpha_{s} ( x_{s-}) $
otherwise $x_.$ jumps to $  x_{s-}-1 $. \\
\indent
 $\bullet\, s \in T^{x_{s-}-1}_.$:  
this is essentially the same as the second case. 
\end{proof}
\mbox{}\\ \\
\begin{proof}{Corollary}{corollary_current}  
Given $x_0 \in \Z,W>1$, define ${\tilde \eta}_0^\alpha, {\tilde \xi}_0^{\alpha,c-\varepsilon}$ as follows:
 for all $z \in [x_0 - Wt, (x_0 + 1) + Wt]$, 
 ${\tilde \eta}_0^\alpha(z) = \eta_0(z),{\tilde \xi}_0^{\alpha,c-\varepsilon}(z)
 = \xi_0^{\alpha,c-\varepsilon}(z)$  
 and  for $z \notin  [x_0 - Wt, (x_0 + 1) + Wt]$, ${\tilde \eta}_0^\alpha(z) 
 = {\tilde \xi}_0^{\alpha,c-\varepsilon}(z)= 0$.
  Then it follows from the finite propagation property 
 (see Lemma \ref{lemma_finite_prop}) that  ${\tilde \eta}_t^\alpha(z) = \eta_t^\alpha(z),
 {\tilde \xi}_t^{\alpha,c-\varepsilon}(z)= \xi_t^{\alpha,c-\varepsilon}(z)$ 
 for $z = x_0, x_0+1$ with $\Prob_0\otimes\Prob$-probability $\geq 1- e^{-bt}$. 
 Since by \eqref{current_harris} the current $\Gamma^\alpha_{x_0}(.,.) $ depends only 
 on the occupation numbers of sites $x_0$ and $x_0 + 1$, it follows that 
 $\Gamma^\alpha_{x_0}(t,{\tilde \eta}_0^\alpha)=\Gamma^\alpha_{x_0}(t,\eta_0)$ and 
 $\Gamma^\alpha_{x_0}(t,{\tilde \xi}_0^{\alpha,c-\varepsilon})
 = \Gamma^\alpha_{x_0}(t,\xi_0^{\alpha,c-\varepsilon}) $. We conclude by  
 applying Lemma \ref{lemma_current} to ${\tilde \eta}_0^\alpha$ 
 and ${\tilde \xi}_0^{\alpha,c-\varepsilon}$.  
\end{proof}
\mbox{}\\ \\
\begin{proof}{Lemma}{lemma_inv_meas}
Let us denote by $\mathcal R_n$ the l.h.s. of \eqref{ergo_density}, 
and by $\mathcal G_n$ the l.h.s. of \eqref{ergo_flux}. By \eqref{mean_rate}, 
\eqref{mean_density} and \eqref{average_afgl}, we have 
\be\label{expectations_R_G}
\int {\mathcal G_n}(\eta)d\mu^\alpha_\lambda(\eta)=\lambda,\quad
\lim_{n\to+\infty}\int {\mathcal R_n}(\eta)d\mu^\alpha_\lambda(\eta)=\overline{R}(\lambda)
\ee 
for every $\lambda\in[0,c]$.
Since $g$ is bounded, and the random variables $\{\eta(x):\,x\in\Z\}$ 
are independent under $\mu^\alpha_\lambda$, the variance of $\mathcal G_n$ is $O(1/n)$ as $n\to+\infty$.
Thus the result for $\mathcal G_n$ follows from the weak law of large numbers in 
$L^2(\mu^\alpha_\lambda)$.  he same argument works for $\mathcal R_n$ in the case $\lambda<c$, 
because  
$$
V(\mathcal R_n)=\frac{1}{n^2}\sum_{x=-n}^0 V\left(
\frac{\lambda}{\alpha(x)}
\right)
$$
where $V(\lambda)$ denotes the variance of $\theta_\lambda$, and
$R(.)$ and $V(.)$  are bounded on any interval bounded away from $1$. 
\end{proof}
\mbox{}\\ \\
\begin{proof}{Lemma}{lemma_equicontinuous}
Let $l,n\in\N$ be such that $h$ depends only on sites $x\in\{-l,\ldots,l\}$.
Since $h$ is a local function, we can write  
$H(x,\lambda)={H_1(x,\lambda)}/{H_2(x,\lambda)}$,
where $H_1(x,.)$ and $H_2(x,.)$ are power series in $\lambda$, whose derivatives 
can be bounded by power series in $\lambda/c$ with coefficients
independent of $x$. This implies that $\displaystyle{(x,\lambda)
\mapsto \frac{\partial H}{\partial\lambda}(x,\lambda)}$ is uniformly bounded on
$\Z\times[0,\Lambda]$ for any $\Lambda\in[0,c)$.
\end{proof}
 \section{Construction of the process}\label{app:cons}
\subsection{Generator construction}\label{app:gen}
In this subsection, we explain why our assumptions on $g$ 
allow the construction of a Feller semigroup on $\bf X$ 
from usual Hille-Yosida theory and the framework of \cite{ligbook}. 
For unbounded functions $g$, the process can only be constructed 
on $\N^{\Z^d}$, and Hille-Yosida theorem cannot be used, see \cite{and} 
and references therein for functions $g$ with at most linear growth, 
and \cite{bs2} for a class of functions $g$ with superlinear growth.\\ \\
 Let $\mathcal T$ denote the set of finite subsets of $\Z^d$. 
 For each $T\in\mathcal T$ and $\eta\in{\bf X}$, let $c_T(\eta,.)$ 
 denote a finite measure on $\overline{\N}^T$. 
In  \cite[Theorem 3.9]{ligbook}  are considered  Markov pregenerators 
defined on  continuous  cylinder functions $f:{\bf X}\to\R$ by
\be\label{generator_liggett}
Lf(\eta):=\sum_{T\in\mathcal T}\int_{\overline{\N}^T}c_T(\eta,d\xi)[f(\eta^\xi)-f(\eta)]
\ee
where, for $\xi\in\overline{\N}^T$,  $\eta^\xi$ denotes 
the particle configuration defined by $\eta^\xi(x)=\eta(x)$ for $x\not\in T$ 
and $\eta(x)=\xi(x)$ for $x\in T$.
The following theorem states sufficient conditions on the mappings $c_T$ 
for \eqref{generator_liggett} to yield a Markov generator. To this end,
 for $u\in\Z^d$ and $x\in\Z^d$,  one defines the quantities
\begin{eqnarray*}
c_T(u) & := & \sup\left\{
||
c_T(\eta_1,.)-c_T(\eta_2,.)||:\,\eta_1(y)=\eta_2(y)\mbox{ for all } y\neq u
\right\}\\
\gamma(x,u) & := & \sum_{T\in\mathcal T:\,T\ni x}c_T(u),\quad \forall x\neq u
\end{eqnarray*}
 where $||.||$ denotes the total variation norm of a measure.
\begin{theorem}[\cite{ligbook}, Theorem 3.9]
Assume that for every $T\in\mathcal T$, the mapping $\eta\mapsto c_T(\eta,.)$ 
is continuous from $\bf X$ to the set of finite measures on $\overline{\N}^T$ 
with respect to the topology of weak convergence. Assume in addition that
\be\label{assumption_liggett_0}
\sup_{x\in\Z^d}\sum_{T\in\mathcal T:\,T\ni x}c_T(\eta,\overline{\N}^T)<+\infty
\ee
\be\label{assumption_liggett}
\sup_{x\in\Z^d}\sum_{u\in\Z^d:\,u\neq x}\gamma(x,u)<+\infty
\ee
Then the closure of \eqref{generator_liggett} is a Markov generator on $\bf X$.
Thus it generates a Feller 
semigroup and defines a Feller process on $\bf X$.
\end{theorem}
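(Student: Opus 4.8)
The plan is to build the Feller semigroup via the Hille--Yosida theorem, realizing it as a limit of semigroups generated by bounded truncations of $L$, and then to identify its generator with the closure $\overline{L}$; this is the strategy of \cite{ligbook}. First I would introduce, for $f\in C(\mathbf{X})$ and $u\in\Z^d$, the coordinatewise oscillation $\Delta_f(u):=\sup\{|f(\eta_1)-f(\eta_2)|:\eta_1(y)=\eta_2(y)\ \mbox{for all }y\neq u\}$ and let $\mathcal{D}$ be the set of $f$ with $|||f|||:=\sum_{u\in\Z^d}\Delta_f(u)<+\infty$. Then $\mathcal{D}$ contains every continuous cylinder function, hence is dense in $C(\mathbf{X})$, and $(\mathcal{D},\|\cdot\|_\infty+|||\cdot|||)$ is a Banach space. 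The telescoping estimate $|f(\eta^\xi)-f(\eta)|\leq\sum_{u\in T}\Delta_f(u)$ together with \eqref{assumption_liggett_0} gives $\|Lf\|_\infty\leq C_0\,|||f|||$ for $f\in\mathcal{D}$, where $C_0<+\infty$ bounds the left-hand side of \eqref{assumption_liggett_0}, and continuity of $\eta\mapsto c_T(\eta,\cdot)$ ensures $Lf\in C(\mathbf{X})$. The pregenerator axioms are then routine: $\mathcal{D}$ is dense, the constants lie in $\mathcal{D}$ with $L\mathbf{1}=0$, and the positive maximum principle holds because $Lf(\eta^\ast)\geq 0$ whenever $\eta^\ast$ minimizes $f$.

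For a finite $\Gamma\subseteq\Z^d$ let $L_\Gamma$ be given by \eqref{generator_liggett} with the sum restricted to $T\subseteq\Gamma$. By \eqref{assumption_liggett_0} this is a bounded operator on $C(\mathbf{X})$, so $S_\Gamma(t):=e^{tL_\Gamma}$ is a uniformly continuous Markov semigroup with $\|S_\Gamma(t)\|=1$ and $S_\Gamma(t)\mathbf{1}=\mathbf{1}$. The heart of the argument is a Lipschitz--propagation estimate, uniform in $\Gamma$: there is $M<+\infty$ depending only on $C_0$ and on $\sup_{x}\sum_{u\neq x}\gamma(x,u)$ from \eqref{assumption_liggett} such that $|||S_\Gamma(t)f|||\leq e^{Mt}\,|||f|||$ for every finite $\Gamma$ and $f\in\mathcal{D}$. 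I would prove it through a coupling of two copies of the truncated dynamics started from configurations differing at a single site $v$: an event $T$ with $T\ni u$ changes coordinate $u$ at rate at most $C_0$, while the dependence of the rates $c_T$, $T\ni x$, on coordinate $v$ lets a discrepancy at $v$ create one at $x$ at a rate controlled by $\gamma(x,v)$. Writing $w_u(t)$ for the $\Delta$-oscillation of $S_\Gamma(t)f$ at $u$, one obtains a linear differential inequality $\frac{d}{dt}(w_u(t))_u\leq\mathbb{A}(w_u(t))_u$ in $\ell^1(\Z^d)$, with off-diagonal entries of $\mathbb{A}$ bounded in terms of $\gamma(\cdot,\cdot)$ and diagonal entries in terms of $C_0$; since \eqref{assumption_liggett} bounds the relevant line sums of $\mathbb{A}$, a Gronwall argument in $\ell^1$ yields the bound, with a $\Gamma$-independent propagator dominating each $S_\Gamma$.

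With this estimate I would show $\{S_\Gamma(t)f\}_\Gamma$ is Cauchy in $C(\mathbf{X})$, uniformly for $t$ in bounded intervals, for $f\in\mathcal{D}$: from $S_{\Gamma'}(t)f-S_\Gamma(t)f=\int_0^t S_{\Gamma'}(t-s)(L_{\Gamma'}-L_\Gamma)S_\Gamma(s)f\,ds$ and $\|(L_{\Gamma'}-L_\Gamma)g\|_\infty\leq\sum_{u}\Delta_g(u)\sum_{T\ni u,\,T\not\subseteq\Gamma}c_T(\eta,\overline{\N}^T)$ applied to $g=S_\Gamma(s)f$, the factor $\sum_u\Delta_{S_\Gamma(s)f}(u)\leq e^{Ms}|||f|||$ is controlled uniformly in $\Gamma$, while $\sum_{T\ni u,\,T\not\subseteq\Gamma}c_T\to 0$ as $\Gamma\uparrow\Z^d$ for each $u$ (a tail of the series in \eqref{assumption_liggett_0}) and stays $\leq C_0$; splitting $u$ over a large finite box (handled via the $\Gamma$-independent propagator) and its complement, this makes the integrand vanish uniformly in $\Gamma'\supseteq\Gamma$ and $s\leq t$. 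Hence $S(t)f:=\lim_{\Gamma\uparrow\Z^d}S_\Gamma(t)f$ exists on $\mathcal{D}$ and extends, by density and $\|S_\Gamma(t)\|=1$, to a strongly continuous positivity-preserving contraction semigroup on $C(\mathbf{X})$ with $S(t)\mathbf{1}=\mathbf{1}$, i.e.\ a Feller semigroup. Passing to the limit in $S_\Gamma(t)f-f=\int_0^t S_\Gamma(s)L_\Gamma f\,ds$, using $L_\Gamma f\to Lf$ uniformly by \eqref{assumption_liggett_0}, shows the generator $\mathcal{L}$ of $S(t)$ agrees with $L$ on $\mathcal{D}$; since $\mathcal{D}$ is dense and $S(t)$-invariant (again by the $|||\cdot|||$ bound), it is a core, so $\mathcal{L}=\overline{L}$. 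The asserted Feller process is then furnished by the standard correspondence between Feller semigroups and Feller processes on the compact metric space $\mathbf{X}$.

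I expect the uniform Lipschitz--propagation estimate $|||S_\Gamma(t)f|||\leq e^{Mt}|||f|||$ to be the main obstacle. One cannot directly differentiate the oscillations $\Delta_{S_\Gamma(t)f}(u)$, which are suprema, so the differential inequality must be obtained from a genuine coupling: run two copies of the truncated dynamics differing at one site, track the set of sites on which they disagree, and use \eqref{assumption_liggett} to bound the rate at which this discrepancy set spreads. The remaining $\ell^1$/Gronwall bookkeeping --- extracting $M$ from the line sums in \eqref{assumption_liggett} and the total rates in \eqref{assumption_liggett_0} --- is mechanical, as are the pregenerator verifications and the final core/process identification.
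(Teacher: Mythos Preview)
The paper does not prove this theorem at all: it is quoted verbatim as \cite[Theorem 3.9]{ligbook} and used as a black box to derive Corollary~\ref{cor_lig}. So there is no ``paper's own proof'' to compare against.

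That said, your sketch is a faithful outline of Liggett's actual proof of Theorem~I.3.9 in \cite{ligbook}: the triple norm $|||f|||=\sum_u\Delta_f(u)$, the bounded truncations $L_\Gamma$, the key uniform estimate $|||S_\Gamma(t)f|||\leq e^{Mt}|||f|||$, the Cauchy argument via $\int_0^t S_{\Gamma'}(t-s)(L_{\Gamma'}-L_\Gamma)S_\Gamma(s)f\,ds$, and the core identification are all exactly his steps. One minor point: Liggett obtains the propagation estimate analytically, by bounding $\Delta_{L_\Gamma f}(u)$ directly in terms of $\sum_v\gamma(u,v)\Delta_f(v)$ plus a diagonal term, and then iterating the resulting linear inequality --- he does not build an explicit two-copy coupling of the truncated dynamics. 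Your coupling heuristic is the probabilistic shadow of the same inequality and would work, but if you want to follow the reference literally, the purely analytic bound on $\Delta_{L_\Gamma f}(u)$ is cleaner and avoids the issue you flag about differentiating a supremum.
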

With our assumptions, one can deduce the following result.
\begin{corollary}\label{cor_lig}
Assume $g$ is a nondecreasing continuous function from $\overline{\N}$ to $[0,+\infty)$ 
such that $g(0)=0<g(1)$, and $p(.)$ is a probability measure on $\Z^d$. Then the closure 
of \eqref{generator} is a Markov generator on $\bf X$.  Thus it generates a Feller 
semigroup and defines a Feller process on $\bf X$.
\end{corollary}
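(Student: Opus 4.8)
The plan is to recognize the operator in \eqref{generator} as a Markov pregenerator of the form \eqref{generator_liggett} and to apply \cite[Theorem 3.9]{ligbook}, whose hypotheses reduce in our setting to elementary estimates. The first observation is that, since $\overline{\N}$ is compact and $g$ is continuous on it, $g$ is automatically bounded; write $g_\infty:=g(+\infty)=\sup_{n\in\N}g(n)<+\infty$. This boundedness is precisely what places the model within reach of Hille--Yosida theory, in contrast with the unbounded-$g$ situation recalled at the beginning of this appendix.

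Next I would exhibit the representation. Since a jump from $x$ to $y$ (with $x\neq y$) only affects the coordinates in $T=\{x,y\}$, I set $c_T\equiv 0$ whenever $|T|\neq 2$, and for $T=\{x,y\}$ with $x\neq y$ I put, writing $\pi_T$ for the restriction of a configuration to $T$,
$$
c_{\{x,y\}}(\eta,\cdot):=\alpha(x)\,p(y-x)\,g(\eta(x))\,\delta_{\pi_T(\eta^{x,y})}+\alpha(y)\,p(x-y)\,g(\eta(y))\,\delta_{\pi_T(\eta^{y,x})},
$$
the hypothesis $g(0)=0$ ensuring that the atom $\pi_T(\eta^{x,y})$ lies in $\overline{\N}^T$ whenever its weight $g(\eta(x))$ is positive. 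A direct check shows that $\sum_{T\in\mathcal T}\int_{\overline{\N}^T}c_T(\eta,d\xi)[f(\eta^\xi)-f(\eta)]$ coincides with $L^\alpha f(\eta)$ for every continuous cylinder function $f$, the would-be $y=x$ contributions vanishing identically.

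It then remains to verify the two structural hypotheses of \cite[Theorem 3.9]{ligbook}. Weak continuity of $\eta\mapsto c_{\{x,y\}}(\eta,\cdot)$ follows from continuity of $\eta\mapsto g(\eta(x))$ (a coordinate projection composed with $g$, continuous on $\overline{\N}$), from continuity of $\eta\mapsto \pi_T(\eta^{x,y})$ as a map $\mathbf X\to\overline{\N}^T$ — checked by running through the finitely many cases of the infinite-occupation conventions, each built from the maps $n\mapsto n\pm 1$ extended continuously to $\overline{\N}$ — and from the fact that weak convergence of Dirac masses on $\overline{\N}^T$ amounts to convergence of their atoms. For the summability, using $g\leq g_\infty$ and $\alpha(\cdot)\leq 1$ one gets $c_{\{x,y\}}(\eta,\overline{\N}^T)\leq g_\infty[p(y-x)+p(x-y)]$, hence $\sum_{T\ni x}c_T(\eta,\overline{\N}^T)\leq 2g_\infty$, which is \eqref{assumption_liggett_0}. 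For \eqref{assumption_liggett}, the total-variation bound $c_{\{x,y\}}(u)\leq 2g_\infty[p(y-x)+p(x-y)]$ (the difference of two nonnegative measures each of mass at most $g_\infty[p(y-x)+p(x-y)]$), together with the observation that $c_{\{x,y\}}(u)=0$ unless $u\in\{x,y\}$ (since $c_{\{x,y\}}(\eta,\cdot)$ depends on $\eta$ only through $\eta(x)$ and $\eta(y)$), gives $\gamma(x,u)=c_{\{x,u\}}(u)\leq 2g_\infty[p(u-x)+p(x-u)]$ for $u\neq x$, so that $\sum_{u\neq x}\gamma(x,u)\leq 4g_\infty<+\infty$. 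Both conditions being in force, \cite[Theorem 3.9]{ligbook} shows that the closure of \eqref{generator} is a Markov generator, hence generates a Feller semigroup and a Feller process on $\mathbf X$, which is the claim.

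The argument is essentially bookkeeping; the only point that needs genuine care is the continuity of $\eta\mapsto\pi_T(\eta^{x,y})$ at configurations carrying infinitely many particles, where one must confirm that the extended jump conventions introduced after \eqref{generator} in Section \ref{sec_results} were chosen exactly so that this map is continuous into the compactified space $\overline{\N}^T$ — everything else being forced by the boundedness of $g$ and by $p$ being a probability measure.
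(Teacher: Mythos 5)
Your proof is correct and follows essentially the same route as the paper: you exhibit the same decomposition $c_{\{x,y\}}(\eta,\cdot)=\alpha(x)p(y-x)g(\eta(x))\delta_{\eta^{x,y}_{|T}}+\alpha(y)p(x-y)g(\eta(y))\delta_{\eta^{y,x}_{|T}}$ with $c_T\equiv 0$ for $|T|\neq 2$, bound $c_T(\eta,\overline{\N}^T)$ and $c_T(u)$ by $g_\infty[p(y-x)+p(x-y)]$ (up to a factor of $2$), and sum over $y$ using that $p$ is a probability measure to verify \eqref{assumption_liggett_0} and \eqref{assumption_liggett}. The only difference is that you also spell out the verification of the weak-continuity hypothesis of Theorem 3.9 (continuity of $\eta\mapsto g(\eta(x))$ on $\overline{\N}$ and of $\eta\mapsto\eta^{x,y}_{|T}$ under the infinite-occupation conventions), which the paper leaves implicit; this is a welcome but minor addition rather than a different method.
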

\begin{proof}{Corollary}{cor_lig}
We may rewrite \eqref{generator} in the form \eqref{generator_liggett} 
by defining  $c_T=c_T^\alpha$  as follows: $c_T^\alpha(\eta,.)=0$ if $|T|\neq 2$, while for 
$T=\{x,y\}$ with $x\neq y$, 
\be\label{def_ct}
c_{\{x,y\}}^\alpha(\eta,.)
= \alpha(x) p(y-x)g[\eta(x)]
\delta_{
\eta^{x,y}_{|T}
}+
 \alpha(y) p(x-y)g[\eta(y)]\delta_{
\eta^{y,x}_{|T}
}
\ee
where $\eta_{|T}$ denotes the restriction of a configuration $\eta$ to sites in $T$.
It follows that if $|T|\neq 2$,  $c_T^\alpha(u)=0$ for any $u\in\Z^d$;  
while for $T=\{x,y\}$ with $x\neq y$, we have 
$$c_T^\alpha(\eta,\overline{\N}^T)\leq [p(y-x)+p(x-y)]g(+\infty)$$
 which implies \eqref{assumption_liggett_0}. Also from \eqref{def_ct}, 
 we have that  $c_T^\alpha(u)=0$  for $|T|\neq 2$ and $u\in\Z$; while 
 for $T=\{x,y\}$,  $c_T^\alpha(u)=0$  if $u\not\in\{x,y\}$, and 
$$
\max[c_T^\alpha(x),c_T^\alpha(y)]\leq 2[\alpha(x)p(y-x)+\alpha(y)p(x-y)]g(+\infty)
$$
Thus, for $x\neq y$, we have 
$$
\gamma^\alpha(x,y)\leq 2[\alpha(x)p(y-x)+\alpha(y)p(x-y)]g(+\infty)
$$
from which \eqref{assumption_liggett} follows.
\end{proof}
\begin{corollary}\label{cor_liggett_finite}
Let $(\eta_t)_{t\geq 0}$ be a Feller process with generator \eqref{generator} 
and initial state $\eta_0\in\N^{\Z^d}$. Then, almost surely with respect 
to the law of the process, we have $\eta_t\in\N^{\Z^d}$ for all $t>0$.
\end{corollary}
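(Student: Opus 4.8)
Corollary A.4: if $(\eta_t)_{t\geq 0}$ is the Feller process with generator \eqref{generator} started from $\eta_0\in\N^{\Z^d}$, then a.s.\ $\eta_t\in\N^{\Z^d}$ for every $t>0$.

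The plan is to realize $(\eta_t)_{t\geq 0}$ through the Harris construction of Subsection~\ref{subsec:Harris} — which is legitimate thanks to the equivalence of the two constructions discussed in Appendix~\ref{app:graphical} (see also \cite{sw}) — and then to bound the occupation number at each site, pathwise, by its initial value plus the number of \emph{potential} arrival events there. The point is that, since $g$ is bounded by $g_\infty=1$ and $\alpha\le 1$, this number does not depend on the configuration and is governed by a rate-one Poisson process.

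First I fix $x_0\in\Z^d$ and read off the update rules \eqref{rule_1}--\eqref{rule_2}: the value $\eta_t(x_0)$ can increase only at a time $s$ for which $\omega$ has an atom $(s,x,u,z)$ with $x+z=x_0$ such that the inequality in \eqref{rule_1} holds, and then it increases by exactly one; no particle is ever added to $x_0$ outside such atoms. Let $N_{x_0}(t)$ count the times $s\in(0,t]$ for which $(s,x,u,z)\in\omega$ for some $(x,u,z)$ with $x+z=x_0$, ignoring the thinning coming from \eqref{rule_1}. By the form \eqref{def_intensity} of the intensity of $\omega$, this is a Poisson process on $(0,+\infty)$ with constant rate $\sum_{z\in\Z^d}p(z)=1$. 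Consequently, pathwise and for every $t\ge 0$,
$$
\eta_t(x_0)\ \le\ \eta_0(x_0)+N_{x_0}(t),
$$
since the right-hand side is an upper bound for the total number of particles ever present at $x_0$ during $[0,t]$ (we do not subtract departures).

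Then I conclude as follows. $\Prob$-a.s.\ the Poisson process $N_{x_0}(\cdot)$ is locally finite, hence $N_{x_0}(t)<+\infty$ for all $t<+\infty$; together with $\eta_0(x_0)<+\infty$ this yields a full-probability event $\Omega_{x_0}$ on which $\eta_t(x_0)<+\infty$ for every $t\ge 0$. Since $\Z^d$ is countable, $\Omega':=\bigcap_{x_0\in\Z^d}\Omega_{x_0}$ has full probability, and on $\Omega'$ one has $\eta_t\in\N^{\Z^d}$ for all $t\ge 0$, which is the assertion.

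I do not anticipate a genuine obstacle. The only step requiring a word of care is the identification of the abstractly defined Feller process of Corollary~\ref{cor_lig} with the process built from the Harris system, which is precisely the equivalence addressed in Appendix~\ref{app:graphical}; and the structural fact making the above domination uniform in $\eta$ is exactly the standing boundedness of $g$ (together with $\alpha\le 1$ and $\sum_z p(z)=1$). Everything else is bookkeeping of the graphical construction, of the same nature as the estimates used to construct the process itself (cf.\ \cite{har,sw}).
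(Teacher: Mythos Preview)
Your argument is correct and is genuinely different from the paper's. The paper stays within the Hille--Yosida framework: it applies the integrated form of the generator identity \eqref{exp_markov} to the truncated test function $f(\eta)=\min(\eta(x),M)$, uses that $|L^\alpha f|$ is bounded uniformly in $M$ because $g$ is bounded, and lets $M\to+\infty$ to conclude $\Exp\,\eta^\alpha_t(x)<+\infty$ for each fixed $t$ and $x$. Your route instead passes through the Harris construction (legitimately, by Appendix~\ref{app:graphical}) and dominates $\eta_t(x_0)$ \emph{pathwise} by $\eta_0(x_0)+N_{x_0}(t)$, with $N_{x_0}$ a rate-one Poisson process counting all potential arrivals at $x_0$.

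What each buys: the paper's argument is self-contained within the semigroup construction and does not invoke the graphical representation, but as written it only yields finiteness for each \emph{fixed} $t$, and the passage to ``for all $t>0$ simultaneously'' still needs a word (e.g.\ via c\`adl\`ag paths or, in effect, the graphical picture). Your argument is pathwise from the outset, so the ``for all $t$'' comes for free and the countable intersection over sites is immediate; the cost is that you must appeal to the equivalence of the two constructions, which you correctly flag. Both hinge on the same structural input, namely the boundedness of $g$ together with $\alpha\le 1$ and $\sum_z p(z)=1$.
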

\begin{proof}{corollary}{cor_liggett_finite}
By Corollary \ref{cor_lig}, for any continuous cylinder function $f:{\bf X}\to\R$, 
\be\label{exp_markov}
\Exp f(\eta_t^\alpha)=\Exp f(\eta_0^\alpha)+\int_0^t \Exp\left[
L^\alpha f(\eta_s^\alpha)
\right]ds
\ee
We may apply \eqref{exp_markov} to the  continuous function 
$f(\eta)=\min(\eta(x),M)$ for arbitrary $x\in\Z^d$ and $M\in\N$,
Using boundedness of $g$ and letting $M\to+\infty$, we obtain 
$\Exp\eta_t^\alpha(x)<+\infty$ for every $x\in\Z^d$.
\end{proof}
\subsection{Graphical construction and equivalence}\label{app:graphical}
Most of the following sketch is taken and summarized from \cite{sw}, 
where it is substantially generalized to cover the framewok of \cite{ligbook}. 
Although based on similar percolation ideas than \cite{har}, it is somewhat different and more general. \\ \\ 
For $n\in\N\setminus\{0\}$, consider a growing (unoriented) connected graph $(G_t^{\alpha,n})_{t\geq 0}$,
 where $G_t^\alpha=(V_t^\alpha,E_t^\alpha)$,
with vertex set $V_t^\alpha\subset\Z^d$, and edge set $E_t^\alpha\subset\Z^d\times\Z^d$. 
By ``growing'', we mean that it is a nondecreasing function of time with respect to inclusion. 
The dynamics of this graph is defined as follows. 
For every potential jump event $(t,x,u,z)\in\omega$ such that both $x$ and $x+z$ 
lie in $[-n,n]^d$, 
if  $x\in V^{\alpha,n}_{t-}$ and $x+z\not\in V_{t-}^{\alpha,n}$,  
the graph $G_t^{\alpha,n}$ is obtained by adding  vertex $x+z$ and edge 
$\{x,x+z\}$ to $G_{t-}^{\alpha,n}$. Similarly, if  $x+z\in V^{\alpha,n}_{t-}$ 
and $x\not\in V_{t-}^{\alpha,n}$,  the graph $G_t^{\alpha,n}$ is obtained 
by adding  vertex $x$ and edge $\{x,x+z\}$ to $G_{t-}^{\alpha,n}$.
 This graph process is constructed from the truncated Poisson process
$$
\omega^n(dt,dx,du,dz)=\indicator{\{x\in[-n,n]^d\}}\indicator{\{x+z\in[-n,n]^d\}}\omega(dt,dx,du,dz)
$$
whose intensity 
$$\mu^n(dt,dx,du,dz):=dtdx\indicator{[0,1]}(u)du\indicator{[-n,n]^d}(x)\indicator{[-n,n]^d}(x+z)\, p(z)dz$$
is now a finite measure. Thus 
$\omega^n([0,T]\times\Z^d\times(0,1)\times\Z^d)<+\infty$ with probability $1$,
 so that $(G_t^{\alpha,n})_{t\in[0,T]}$ is a Markov jump process with bounded jump rates. 
 The total rate at which the cardinal of $G_t^{\alpha,n}$ may grow 
 if the current state is  $G_{t-}^{\alpha,n}=G:=(V,E)$, is
$$
\sum_{y\in G}\sum_{x\in(\Z^d\cap[-n,n]^d)\setminus G}\left[p_n(x,y)+p_n(y,x)\right]\leq 2|V|
$$
where $p_n(x,y):=p(y-x)\indicator{[-n,n]^d\times[-n,n]^d}(x,y)$.
It follows that, for a given finite initial graph  $G_0:=(V_0,E_0)$, 
\be\label{expected_graph}\Exp|G_t^{\alpha,n}|\leq |V_0|e^{2t}\ee
 The increasing union 
$$G_t^\alpha:=\bigcup_{n\in\N\setminus\{0\}}G_t^{\alpha,n}:=\left(
\bigcup_{n\in\N\setminus\{0\}}V_t^{\alpha,n},
\bigcup_{n\in\N\setminus\{0\}}E_t^{\alpha,n}
\right)
$$
 defines a growing connected graph.
By \eqref{expected_graph} and monotone convergence, we have  
$\Exp|G_t^\alpha|\leq |V_0|e^{2t}$.  Hence, with probability one, 
$G_t^{\alpha}$ is finite for every  $t\in[0,T]$.  It follows that, almost surely,
\be\label{stationary_convergence} 
\forall T>0,\, \exists n_0\in\N\setminus\{0\},\quad
\forall t\in[0,T],\,\forall n\geq n_0,\,G_t^{\alpha}=G_t^{\alpha,n}
\ee
and that  $(G_t^\alpha)_{t\in[0,T]}$  is the growing graph obtained 
by adding vertices and edges as above, but {\em without} the restriction 
that vertices should belong to $[-n,n]^d$.\\ \\
Now, given a terminal time $T$ and a given site $x$, we may construct 
 growing graphs  $(\check{G}^\alpha_t)_{t\in[0,T]}$  and 
$(\check{G}^{\alpha,n}_t)_{t\in[0,T]}$  starting from vertex $x$ 
backwards in time, that is with respect to the time-reversed Poisson process seen from time $T$,
obtained from \eqref{def_omega} by setting
$$
\check{\omega}(dt,dx,du,dz):=\sum_{n\in\N}\delta_{(T-T_n,X_n,U_n,Z_n)}\indicator{\{T_n\leq T\}}
$$
The reversed Poisson process has the same law as the original Poisson process 
on the time interval $[0,T]$.\\ \\
Consider the backward graph process $(\check{G}_t^{\alpha,x})_{t\in[0,T]}$ 
starting from the terminal graph $\check{G}_0^{\alpha,x}=(\{x\},\emptyset)$
with the single vertex $x$ and no edge. We denote by $\check{V}_t^{\alpha,x}$  
the set of vertices of $\check{G}_t^{\alpha,x}$. 
We claim that for every $t\in[0,T]$, if  $(\eta_s^\alpha)_{s\in[T-t,T]}$ 
is a process satisfying \eqref{rule_1}--\eqref{rule_2}, then $\eta^\alpha_T(x)$ 
depends only on the restriction of $\eta_{T-t}^\alpha$ to sites $x\in\check{V}^{\alpha,x}_t$.
To prove this statement,  assume it is true for some time $t\leq T$, and let
$$
t':=\sup\{
\tau>t:\,\check{G}^{\alpha,x}_\tau=\check{G}^{\alpha,x}_t
\}
$$
Then  by \eqref{rule_2} and by definition of the graph dynamics, we have
\be\label{blank_interval}
\eta^\alpha_{T-\tau}(y)=\eta^\alpha_{T-t}(y),\quad
\forall y\in\check{V}^{\alpha,x}_t,\,\tau\in(t,t')
\ee
since on the time interval $(t,t')$, for the reverse Poisson process, 
there occurs no Poisson event connecting a vertex 
$y\in\check{V}^{\alpha,x}_t$ to a vertex $z\not\in\check{V}^{\alpha,x}_t$. 
It follows from 
\eqref{blank_interval} that if two processes satisfying 
\eqref{rule_1}--\eqref{rule_2} coincide at time $T-t'$ at sites
$y\in\check{V}^{\alpha,x}_{t'-}=\check{V}^{\alpha,x}_{t}$, they will coincide 
at time $T$ at site $x$. Finally, if
the two processes coincide at time $(T-t')_-$ at sites $z\in \check{V}^{\alpha,x}_{t'}$, 
then by \eqref{rule_1} and definition of the graph dynamics, they will coincide 
at time $T-t'$ at sites
$y\in\check{V}^{\alpha,x}_{t'-}$. Since $x$ was arbitrary, this establishes 
uniqueness of a process $(\eta_s^\alpha)_{s\in[0,T]}$ satisfying \eqref{rule_1}--\eqref{rule_2} 
and starting from a given initial configuration $\eta_0$. It shows more precisely that 
the restriction of such a process to the space-time domain
$
\{
(s,y)\in[0,T]\times\Z^d:\, y\in\check{V}^{\alpha,x}_{T-s}
\}
$
can be constructed uniquely by rules \eqref{rule_1}--\eqref{rule_2}, 
where $\omega$ is replaced by the finite measure $\omega^{T,x}$ obtained by removing all potential
events for which the edge $\{x,x+z\}$ does not belong to $\check{E}_T^{\alpha,x}$.
Conversely, for every $t>0$ and $x\in\Z^d$, we may {\em define} $\eta^\alpha_T(x)$ this way.
We claim that the resulting process will satisfy \eqref{rule_1}--\eqref{rule_2}
 with respect to the full measure $\omega$.
Indeed: first, \eqref{rule_2} is satisfied {\em a fortiori} for  $\omega$ because 
it is satisfied for $\omega^{T,x}$, and
the support of the latter is included in that of the former. Next, \eqref{rule_1} 
also holds for $\omega$, because if $(s,y,z,v)\in\omega$, 
then for every $T>s$, we have by construction that $\{y,y+z\}\in\check{E}^{\alpha,x}_{T-s}$, 
and thus $(s,y,z,v)\in\omega^{T,x}$.\\ \\
Consider now the  process  $(\eta^{\alpha,n}_t)_{t\in[0,T]}$  obtained  by  
letting particles jump from $x$ to $y$ only if $(x,y)\in[-n,n]^d$,
that is replacing $\omega$ by $\omega^n$ in \eqref{rule_1}--\eqref{rule_2}. 
This is a jump Markov process with bounded generator similar to \eqref{generator},
 but with $p_n(x,y)$ instead of $p(y-x)$. As above for the unrestricted process, 
 the configuration $\eta^{\alpha,n}_T$ uses only edges from the graph $\check{G}_T^{\alpha,n}$. 
 {}From \eqref{stationary_convergence}, we have  a random $n_0$ such that 
 $\eta_t^{\alpha,n}=\eta_t^\alpha$ for $n\geq n_0$ and $t\in[0,T]$. Hence $\eta^{\alpha,n}_t$ 
 converges a.s. (and thus in law) to $\eta^\alpha_t$ as $n\to+\infty$.\\ \\
On the other hand, \cite[Corollary 3.14]{ligbook} implies that the semigroup 
of the jump processes $(\eta^{\alpha,n}_t)_{t\in[0,T]}$  converges as $n\to+\infty$
to the semigroup of the process with infinitesimal generator 
\eqref{generator}. This shows that the process constructed {\em \`a la Harris} 
coincides with the process defined by \eqref{generator}. \\ \\
\end{appendix}
\mbox{}\\ 
\noindent {\bf Acknowledgments:}
This work was partially supported by ANR-2010-BLAN-0108, 
PICS no. 5470, FSMP,
and grant number 281207 from the
Simons Foundation awarded to K. Ravishankar.
We thank 
Universit\'{e}s Blaise Pascal and Paris Descartes, and EPFL
 for hospitality. 
\end{document}